\numberwithin{equation}{section}
\numberwithin{figure}{section}
\theoremstyle{plain}
\newtheorem{thm}{\protect\theoremname}
\theoremstyle{remark}
\newtheorem*{rem*}{\protect\remarkname}
\theoremstyle{plain}
\newtheorem{cor}[thm]{\protect\corollaryname}
\theoremstyle{plain}
\newtheorem{conjecture}[thm]{\protect\conjecturename}
\theoremstyle{plain}
\newtheorem*{fact*}{\protect\factname}
\theoremstyle{plain}
\newtheorem{lem}[thm]{\protect\lemmaname}
\providecommand{\conjecturename}{Conjecture}
\providecommand{\corollaryname}{Corollary}
\providecommand{\factname}{Fact}
\providecommand{\lemmaname}{Lemma}
\providecommand{\remarkname}{Remark}
\providecommand{\theoremname}{Theorem}
\providecommand{\conjecturename}{Conjecture}
\providecommand{\corollaryname}{Corollary}
\providecommand{\factname}{Fact}
\providecommand{\lemmaname}{Lemma}
\providecommand{\remarkname}{Remark}
\providecommand{\theoremname}{Theorem}
\begin{document}
\title[Extremal functions for the second-order Sobolev inequality]{Extremal functions for the second-order Sobolev inequality on groups
of polynomial growth}
\author{Bobo Hua, Ruowei Li and Florentin Münch}
\address{Bobo Hua: School of Mathematical Sciences, LMNS, Fudan University,
Shanghai 200433, China; Shanghai Center for Mathematical Sciences,
Jiangwan Campus, Fudan University, No. 2005 Songhu Road, Shanghai
200438, China.}
\email{bobohua@fudan.edu.cn}
\address{Ruowei Li: School of Mathematical Sciences, Fudan University, Shanghai
200433, China; Shanghai Center for Mathematical Sciences, Jiangwan
Campus, Fudan University, No. 2005 Songhu Road, Shanghai 200438, China.}
\email{rwli19@fudan.edu.cn}
\address{Florentin Münch: MPI MiS Leipzig, 04103 Leipzig, Germany}
\email{florentin.muench@mis.mpg.de}
\begin{abstract}
In this paper, we prove the second-order Sobolev inequalities on Cayley
graphs of groups of polynomial growth. We use the discrete Concentration-Compactness
principle to prove the existence of extremal functions for best constants
in supercritical cases. As applications, we get the existence of positive
ground state solutions to the $p$-biharmonic equations and the Lane-Emden
systems. 
\end{abstract}

\maketitle

\section{Introduction}

Sobolev inequalities are important in the studies of partial differential
equations and Riemannian geometry etc. For $N,\,k,\,p\geq1,\,kp<N,\frac{1}{\,p^{\star}}=\frac{1}{p}-\frac{k}{N}$,
we have the classical Sobolev inequality\label{eq:c-Sobo-ineq}, 
\begin{equation}
{\displaystyle \lVert u\rVert_{p^{\star}}\leq C_{p}\lVert u\rVert_{D^{k,p}},\;\forall u\in D_{0}^{k,p}\left(\mathbb{R}^{\mathit{N}}\right),}\label{eq:classical}
\end{equation}
where $C_{p}$ is a constant depending on $N$ and $p$, we omit the
dependence of constants $N$ for convenience, and $D_{0}^{k,p}\left(\mathbb{R}^{\mathit{N}}\right)$
denotes the completion of $C_{0}^{\infty}\left(\mathbb{R}^{\mathit{N}}\right)$
in the norm $\lVert u\rVert_{D^{k,p}}^{p}\coloneqq\sum\limits _{|\alpha|=k}\varint\lvert D^{\alpha}u\rvert^{p}\mathbf{\mathrm{d}}x$.
Define 
\[
D^{k,p}\left(\mathbb{R}^{\mathit{N}}\right)\coloneqq\left\{ u\in L^{\frac{Np}{N-kp}}(\mathbb{R}^{\mathit{N}}):\lVert u\rVert_{D^{k,p}}<\infty\right\} ,
\]
and it is well-known that $D_{0}^{k,p}\left(\mathbb{R}^{\mathit{N}}\right)=D^{k,p}\left(\mathbb{R}^{\mathit{N}}\right)$.
It is sufficient to establish the inequality (\ref{eq:classical})
for $k=1$ as its validity for higher $k$ can be obtained by induction
on $k$ \cite{RJ}.

Whether the best constant is attained by some $u\in D_{0}^{k,p}\left(\mathbb{R}^{\mathit{N}}\right)$,
which is called the extremal function, has been intensively investigated
in the literature. When $k=1,$ $p=1$ the best constant is the isoperimetric
constant and the extremal function is the characteristic function
of a ball \cite{FF,FR}. For $k=1,p>1$, the best constants and extremal
functions were obtained in \cite{A,T1,T2,R1,C1}. For $k>1,$ P. L.
Lions \cite{L1,L2,L3,L4} established the Concentration-Compactness
method, which provided a new idea for proving the existence of extremal
functions. The general idea is as follows. The best constant in the
Sobolev inequality (\ref{eq:classical}) is given by 
\[
K:=\inf_{\substack{u\in D^{k,p}(\mathbb{R}^{N})\\
\lVert u\rVert_{p^{\star}}=1
}
}\lVert u\rVert_{D^{k,p}}^{p}>0.
\]
Take a minimizing sequence $\{u_{n}\}$ and regard $\left\{ |u_{n}|^{p^{\star}}\mathrm{dx}\right\} $
as a sequence of probability measures. He proved in \cite{L1,L3}
that there are three cases of the limit of the sequence: compactness,
vanishing and dichotomy. Vanishing and dichotomy are ruled out by
the rescaling trick and subadditivity inequality. Therefore, the extremal
function exists by the compactness. Since the Concentration-Compactness
principle requires weak convergence $u_{n}\xrightharpoonup{}u\;\hphantom{}\text{in}\;D^{k,p}(\mathbb{R}^{\mathit{N}})$,
this method does not apply the case of $p=1$. And Lieb proved the
existence of extremal functions by a compactness technique \cite[Lemma 2.7]{L5}
which is induced by the Brézis-Lieb lemma \cite{BL}.

In recent years, people paid attention to the analysis on discrete
groups. Since the Sobolev inequalities are useful analytical tools,
they have been extended to the discrete setting \cite{CY,O1}. For
finite graphs, Sobolev inequalities and sharp constants have been
obtained by \cite{NKW,NKYTW,SST,TNK,YKWNT,YWK}. In this article,
we generalize our previous results on the existence of extremal functions
for the first-order Sobolev inequality \cite{HL} to higher order
inequalities on Cayley graphs of groups of polynomial growth.

Let $(G,S)$ be a Cayley graph of a group $G$ with a finite symmetric
generating set $S$, i.e. $S=S^{-1}.$ There is a natural metric on
$(G,S)$ called the word metric, denoted by $d^{S}$. Let $B_{p}^{S}(n):={\left\{ x\in G\mid d^{S}(p,x)\leq n\right\} }$
denote the closed ball of radius $n$ centered at $p\in G$ and denote
$\mid B_{p}^{S}(n)\mid:=\sharp B_{p}^{S}(n)$ as the volume (i.e.
cardinality) of the set $B_{p}^{S}(n)$. When $e$ is the unit element
of $G$, the volume $\beta_{S}(n):=\mid B_{e}^{S}(n)\mid$ of $B_{e}^{S}(n)$
is called the growth function of the group, see \cite{M68-1,M68-2,W68,G90,G97,G14,S55}.
A group $G$ is called of polynomial growth, or of polynomial volume
growth, if $\beta_{S}(n)\leq Cn^{A}$, for any $n\geq1$ and some
$A>0$, which is independent of the choice of the generating set $S$
since the metrics $d^{S}$ and $d^{S_{1}}$ are bi-Lipschitz equivalent
for different finite generating sets $S$ and $S_{1}$. By Gromov's
theorem and Bass' volume growth estimate of nilpotent groups \cite{B72},
for any group $G$ of polynomial growth there are constants $C_{1}(S)$,
$C_{2}(S)$ depending on $S$ and $N\in\mathbb{N}$ such that for
any $n\geq1$, 
\[
C_{1}(S)n^{N}\leq\beta_{S}(n)\leq C_{2}(S)n^{N},
\]
where the integer $N$ is called the homogeneous dimension or the
growth degree of $G$. Since $N$ is sort of dimensional constant
of $G$, we always omit the dependence of $N$ in various constants.

In this paper, we consider the Cayley graph $(G,S)$ of a group of
polynomial growth with the homogeneous dimension $N\geq3$. We denote
by $\ell^{p}(G)$ the $\ell^{p}$-summable functions on $G$ and by
$D_{0}^{k,p}(G)$ $(k=1,2)$ (resp. $\widetilde{D}_{0}^{2,p}(G)$
) the completion of finitely supported functions in the $D^{k,p}$
(resp. $\widetilde{D}^{2,p}$ ) norm, where $\lVert u\rVert_{D^{1,p}(G)}\coloneqq\lVert\nabla u\rVert_{\ell^{p}(G)}$,
$\lVert u\rVert_{D^{2,p}(G)}\coloneqq\lVert\Delta u\rVert_{\ell^{p}(G)}$
and $\lVert u\rVert_{\widetilde{D}^{2,p}(G)}\coloneqq\lVert\nabla^{2}u\rVert_{\ell^{p}(G)}$,
see Section 2 for details. Using Dungey's result \cite[Theorem 1]{N},
we prove the second-order norm defined by the Hessian and Laplace
operators are equivalent on a nilpotent group $G$, hence $D_{0}^{2,p}(G)=\widetilde{D}_{0}^{2,p}(G)$,
see Lemma \ref{lem: norm equivalence}. Analogous to the continuous
setting, set 
\[
D^{k,p}(G)\coloneqq\left\{ u\in\ell^{\frac{Np}{N-kp}}(G):\lVert u\rVert_{D^{k,p}(G)}<\infty\right\} ,
\]
\[
\widetilde{D}^{2,p}(G)\coloneqq\left\{ u\in\ell^{\frac{Np}{N-2p}}(G):\lVert u\rVert_{\widetilde{D}^{2,p}(G)}<\infty\right\} .
\]
For $\mathbb{R}^{N}$, the equivalence $D_{0}^{k,p}(\mathbb{R}^{N})=D^{k,p}(\mathbb{R}^{N})$
follows from the approximation of $f\in D^{k,p}(\mathbb{R}^{N})$
by compact supported functions using nice cutoff functions. Inspired
by the continuous setting, for a concrete example, integer lattice
graph $\mathbb{Z}^{N}$, the equivalence $D_{0}^{k,p}(\mathbb{Z}^{N})=D^{k,p}(\mathbb{Z}^{N})$
can be proved by constructing cutoff functions. We prove that any
$D^{1,p}(\mathbb{Z}^{N})$ function can be approximated by a \textquotedbl linear\textquotedbl{}
cutoff function of the logarithmic function (Theorem \ref{thm:the equivalence for Z^N_k=00003D00003D00003D1})
and Cosco, Nakajima and Schweiger give a more explicit estimate in
\cite{CSF21}. However, the \textquotedbl linear\textquotedbl{} cutoff
function does not work for $D^{2,p}(\mathbb{Z}^{N})$ since the Laplacian
is \textquotedbl bad\textquotedbl{} when it comes to the sharp corner.
Different from the continuous setting which can be mollified locally,
we find a way to fix this by taking a \textquotedbl close to linear\textquotedbl{}
third-order polynomial function being smooth at the sharp corner.
Moreover, the Euclidean distance is necessary for cutoff functions
since its Laplacian is decreasing while the combinatorial distance
is not. Then using the discrete chain rule (Lemma \ref{lem:chain rule})
we can get a good estimate of the second-order norm, and finally construct
desired cut-off functions, see Theorem \ref{thm: the equivalence for Z^N_k=00003D00003D00003D2}.
Then we prove the equivalence $D_{0}^{k,p}(G)=D^{k,p}(G)$ on general
Cayley graphs $(G,S)$, see Theorem \ref{thm: the equivalence for G}
and Theorem \ref{thm: the equivalence for G_k=00003D00003D00003D2}.
First $D_{0}^{k,p}(G)\subseteq D^{k,p}(G)$ follows from Sobolev inequalities,
then we prove the converse direction via the parabolicity theory on
graphs \cite[Corollary 2.6]{SL} for $k=1$ and the functional analysis
method by checking the Laplace operator is an isometry from $D_{0}^{2,p}(G)$
(also $D^{2,p}(G)$) to $\ell^{p}(G)$.

For special case $p=2$, we extend the results to the Cayley graph
$(G,S)$ satisfying $\beta_{S}(n)\geq C(S)n^{N},\ \forall n\geq1$.
We prove the Hodge decomposition theorem on $1$-forms on edges (Theorem
\ref{thm:Hodge decomposition}) and use it to prove $D_{0}^{1,2}=D^{1,2}$
on $G$ (Corollary \ref{cor:D012=00003D00003D00003D00003DD12}). Moreover,
we prove $D_{0}^{2,2}=D^{2,2}$ on the groups satisfying the second-order
Sobolev inequality including polynomial growth groups and non-amenable
groups.

Using the boundedness of Riesz transforms \cite{N} and the functional
calculus in Banach spaces \cite{M} we get the following discrete
second-order Sobolev inequality by induction on the first-order Sobolev
inequality: 
\begin{equation}
\lVert u\rVert_{\ell^{q}}\leq C_{p,q}\lVert u\rVert_{D^{2,p}},\;\forall u\in D^{2,p}(G),\label{eq:second-order Sobo ineq}
\end{equation}
where $N\geq3,1<p<\dfrac{N}{2},q=p^{\ast\ast}\coloneqq\dfrac{Np}{N-2p}$,
see Theorem \ref{thm: second order sobo ineq}. Since $\ell^{p}(G)$
embeds into $\ell^{q}(G)$ for any $q>p$, one verifies that the inequality
(\ref{eq:second-order Sobo ineq}) holds for $q\geq p^{\ast\ast}$.
Recalling the continuous setting, it is called subcritical for $q<p^{\ast\ast}$,
critical for $q=p^{\ast\ast}$ and supercritical for $q>p^{\ast\ast}$
for Sobolev inequalities. Therefore, (\ref{eq:second-order Sobo ineq})
holds in both critical and supercritical cases.

The optimal constant in the Sobolev inequality (\ref{eq:second-order Sobo ineq})
is given by 
\begin{equation}
K:=\inf_{\substack{u\in D^{2,p}(G)\\
\lVert u\rVert_{q}=1
}
}\lVert u\rVert_{D^{2,p}}^{p}.\label{eq:inf}
\end{equation}
In order to prove that the infimum is achieved, we consider a minimizing
sequence $\{u_{n}\}\subset D^{2,p}(G)$ satisfying 
\begin{equation}
\lVert u_{n}\rVert_{q}=1,\lVert u_{n}\rVert_{D^{2,p}}^{p}\to K,n\to\infty.\label{eq:min seq}
\end{equation}
We want to prove $u_{n}\to u$ strongly in $D^{2,p}(G)$, which yields
that $u$ is a minimizer.

We prove the following main results. 
\begin{thm}
\label{thm:main1}For $N\geq3,1<p<\frac{N}{2}$, $q>p^{\ast\ast}=\frac{Np}{N-2p}$,
let $\left\{ u_{n}\right\} \subset D^{2,p}(G)$ be a minimizing sequence
satisfying (\ref{eq:min seq}). Then there exists a sequence $\{x_{n}\}\subset G$
and $v\in D^{2,p}(G)$ such that the sequence after translation $\left\{ v_{n}(x):=u_{n}(x_{n}x)\right\} $
contains a convergent subsequence that converges to $v$ in $D^{2,p}(G)$.
And $v$ is a minimizer for $K$. 
\end{thm}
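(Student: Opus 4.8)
The plan is to run a discrete Concentration-Compactness argument which, thanks to the supercriticality $q>p^{\ast\ast}$, collapses into an elementary non-vanishing estimate together with a strict subadditivity inequality, thereby avoiding the delicate rescaling and dichotomy analysis of the continuous method. First I would record that $\{u_n\}$ is bounded in $D^{2,p}(G)$ by (\ref{eq:min seq}), and that the critical Sobolev inequality (\ref{eq:second-order Sobo ineq}) (Theorem \ref{thm: second order sobo ineq}) yields $\|u_n\|_{\ell^{p^{\ast\ast}}}\le C\|u_n\|_{D^{2,p}}\le M$ for all $n$. Since $q>p^{\ast\ast}$, the interpolation
\[
1=\|u_n\|_{\ell^q}^q=\sum_{x\in G}|u_n(x)|^{q-p^{\ast\ast}}\,|u_n(x)|^{p^{\ast\ast}}\le \|u_n\|_{\ell^\infty}^{\,q-p^{\ast\ast}}\,\|u_n\|_{\ell^{p^{\ast\ast}}}^{p^{\ast\ast}}
\]
forces $\|u_n\|_{\ell^\infty}\ge c>0$ uniformly in $n$. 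Because $u_n\in\ell^q(G)$ its supremum is attained, so I can choose $x_n\in G$ with $|u_n(x_n)|\ge c$ and set $v_n(x):=u_n(x_nx)$. Left translations are graph automorphisms of the Cayley graph $(G,S)$ and commute with $\Delta$, hence $\|v_n\|_{\ell^q}=1$, $\|v_n\|_{D^{2,p}}^p\to K$, and moreover $|v_n(e)|\ge c$.

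Next I would extract a pointwise-convergent subsequence. Boundedness in $\ell^{p^{\ast\ast}}(G)$ gives uniform pointwise bounds, so a diagonal argument over the countable set $G$ produces a subsequence (not relabelled) with $v_n(x)\to v(x)$ for every $x\in G$. The limit satisfies $|v(e)|\ge c>0$, so $v\not\equiv 0$; by Fatou $v\in\ell^{p^{\ast\ast}}(G)$, and since $\Delta$ has finite range we also have $\Delta v_n\to\Delta v$ pointwise, whence $\|\Delta v\|_{\ell^p}^p\le\liminf_n\|\Delta v_n\|_{\ell^p}^p=K$ and $v\in D^{2,p}(G)$. I would then apply the Br\'ezis--Lieb lemma \cite{BL} twice, once to the $\ell^q$-norms of $\{v_n\}$ and once to the $\ell^p$-norms of the (bounded, pointwise-convergent) Laplacians $\{\Delta v_n\}$: writing $w_n:=v_n-v$,
\[
\|w_n\|_{\ell^q}^q=1-\|v\|_{\ell^q}^q+o(1),\qquad \|\Delta w_n\|_{\ell^p}^p=K-\|\Delta v\|_{\ell^p}^p+o(1),
\]
and I set $\lambda:=1-\|v\|_{\ell^q}^q=\lim_n\|w_n\|_{\ell^q}^q\in[0,1]$.

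Finally I would combine these splittings with the sharp inequality $\|\Delta u\|_{\ell^p}^p\ge K\|u\|_{\ell^q}^p$, which is just the definition of $K$ in (\ref{eq:inf}), applied to $v$ and to each $w_n$. This gives $\|\Delta v\|_{\ell^p}^p\ge K(1-\lambda)^{p/q}$ and $K-\|\Delta v\|_{\ell^p}^p=\lim_n\|\Delta w_n\|_{\ell^p}^p\ge K\lambda^{p/q}$; adding these and dividing by $K>0$ yields $1\ge(1-\lambda)^{p/q}+\lambda^{p/q}$. Since $q>p$ we have $p/q<1$, so $t\mapsto t^{p/q}$ is strictly subadditive and $(1-\lambda)^{p/q}+\lambda^{p/q}>1$ for every $\lambda\in(0,1)$; hence $\lambda\in\{0,1\}$. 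The case $\lambda=1$ would give $v\equiv0$, contradicting $v\not\equiv0$, so $\lambda=0$. Then $\|v\|_{\ell^q}=1$, so $v$ is admissible and $\|\Delta v\|_{\ell^p}^p\ge K$; together with the lower-semicontinuity bound $\|\Delta v\|_{\ell^p}^p\le K$ this shows $v$ is a minimizer. Feeding $\lambda=0$ back into the Br\'ezis--Lieb identity for the Laplacians gives $\|\Delta w_n\|_{\ell^p}^p=K-K+o(1)\to0$, i.e.\ $v_n\to v$ strongly in $D^{2,p}(G)$.

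I expect the main obstacle to be ruling out the loss of mass at infinity, i.e.\ the dichotomy $\lambda\in(0,1)$. This is precisely the step where the group-translation invariance (used to recenter the sequence and thereby guarantee $v\neq0$) and the strict subadditivity of $t\mapsto t^{p/q}$ must cooperate, and it is what replaces the subadditivity-of-the-best-constant and rescaling arguments of the continuous Concentration-Compactness principle.
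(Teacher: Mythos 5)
Your argument is correct and is essentially the paper's Proof II: the same three ingredients appear in the same roles, namely non-vanishing of $\lVert u_n\rVert_{\ell^\infty}$ via interpolation between $\ell^{p^{\ast\ast}}$ and $\ell^\infty$ (Lemma \ref{lem:lower bound}), recentering by group translations followed by a diagonal extraction, and the Br\'ezis--Lieb splitting combined with the strict subadditivity of $t\mapsto t^{p/q}$ to force $\lVert v\rVert_{\ell^q}=1$ and strong convergence. The only cosmetic difference is that you add the two Sobolev lower bounds for $v$ and $w_n$ to get $1\geq\lambda^{p/q}+(1-\lambda)^{p/q}$, whereas the paper packages the same inequality as a ratio (\ref{eq:p/q-1}).
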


\begin{rem*}
(1) This result implies that best constant can be obtained in the
supercritical case.

(2) If we define the second-order norm by the Hessian operator, i.e.
$\tilde{D}^{2,p}$ (see Section 2), then the second-order Sobolev
inequalities also hold, and the strong convergence and existence of
minimizer in $\tilde{D}_{0}^{2,p}(G)$ follows from a similar argument.

(3) In particular, for $p=1$, we can get the same results in $\tilde{D}^{2,1}(\mathbb{Z}^{N})$,
see Section 7. 
\end{rem*}
We will provide two proofs for Theorem \ref{thm:main1}. In the continuous
setting, Lions proved the existence of extremal functions by Concentration-Compactness
principle \cite[Lemma I.1.]{L3} and a rescaling trick \cite[Theorem I.1, (17)]{L3}.
And Lieb in \cite{L5} used a compactness technique and the rearrangement
inequalities. Following Lions, the main idea of proof I is to prove
a discrete analog of Concentration-Compactness principle, see Lemma
\ref{lem:Concentration-Compact}. However, we don't know proper notion
of rescaling and rearrangement tricks on graphs to exclude the vanishing
case of the limit function. Inspired by \cite{HLY}, for the supercritical
case, we prove that the translation sequence has a uniform positive
lower bound at the unit element, see Lemma \ref{lem:lower bound},
which excludes the vanishing case. The idea of proof II is based on
a compactness technique by Lieb \cite[Lemma 2.7]{L5} and the nonvanishing
of the limit of translation sequence.

As applications, we get the existence of positive ground state solutions
for the discrete nonlinear $p$-biharmonic equations and the Lane--Emden
systems in supercritical cases. They have certain physical backgrounds,
such as traveling waves in a suspension bridge \cite{Z} and the static
deflection of an elastic plate \cite{IA}, and have been well studied
in continuous setting, see for example \cite{L6,CL,WX,SZ1,SZ2,SZ3,BM,PQS,M1,M2,S1,K1,MC,GG,dF,W10,WY13,HHY14,LY16,MY19,MHY19,NNPY20,GHZ15,HY15,HY16,HYZ18}
and references therein. For discrete setting, there are few results
of the fourth order nonlinear equations \cite{HSZ,XZ21}.

First by Theorem \ref{thm:main1}, we can get the existence of positive
solutions to Euler-Lagrange equation of (\ref{eq:second-order Sobo ineq})
as follows. 
\begin{cor}
\label{cor:p-bihar}For $N\geq3,1<p<\frac{N}{2},q>p^{\ast\ast}$,
there is a positive ground state solution of the nonlinear $p$-biharmonic
equation 
\begin{equation}
\Delta\left(\mid\Delta u\mid^{p-2}\Delta u\right)-\mid u\mid^{q-2}u=0,\;\forall u\in D^{2,p}(G),\label{eq:p-bihar}
\end{equation}
where $\Delta$ is the Laplace on graphs defined as $\Delta u(x)\coloneqq\sum\limits _{y\sim x}(u(y)-u(x))$. 
\end{cor}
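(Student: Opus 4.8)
The plan is to realize equation \eqref{eq:p-bihar} as the Euler--Lagrange equation of the constrained minimization problem \eqref{eq:inf} and to feed in the minimizer supplied by Theorem \ref{thm:main1}. Fix the minimizer $v\in D^{2,p}(G)$ with $\lVert v\rVert_q=1$ and $\lVert\Delta v\rVert_{\ell^p}^p=K$. Writing $F(u)=\lVert\Delta u\rVert_{\ell^p}^p=\sum_x\lvert\Delta u(x)\rvert^p$ and $E(u)=\lVert u\rVert_q^q=\sum_x\lvert u(x)\rvert^q$, I would first compute the variational derivatives $\langle F'(v),\phi\rangle=p\sum_x\lvert\Delta v\rvert^{p-2}\Delta v\,\Delta\phi$ and $\langle E'(v),\phi\rangle=q\sum_x\lvert v\rvert^{q-2}v\,\phi$ for finitely supported $\phi$. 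Since the graph Laplacian is self-adjoint for the counting measure and $\phi$ has finite support, summation by parts turns the first identity into $\langle F'(v),\phi\rangle=p\sum_x\Delta(\lvert\Delta v\rvert^{p-2}\Delta v)(x)\,\phi(x)$. The Lagrange multiplier rule then yields a constant $\mu$ with
\[
p\,\Delta(\lvert\Delta v\rvert^{p-2}\Delta v)=\mu\,q\,\lvert v\rvert^{q-2}v.
\]
Testing against $\phi=v$ and using $\lVert\Delta v\rVert_{\ell^p}^p=K$ and $\lVert v\rVert_q^q=1$ pins down $\mu=pK/q$, so $v$ solves $\Delta(\lvert\Delta v\rvert^{p-2}\Delta v)=K\lvert v\rvert^{q-2}v$.

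Next I would remove the constant $K$ by scaling. Setting $u_0=K^{1/(q-p)}v$ and using that the left-hand side is $(p-1)$-homogeneous while the right-hand side is $(q-1)$-homogeneous, one checks that $u_0$ satisfies \eqref{eq:p-bihar} exactly. To identify $u_0$ as a ground state, I would pass to the action functional $I(u)=\tfrac1p\lVert\Delta u\rVert_{\ell^p}^p-\tfrac1q\lVert u\rVert_q^q$ and its Nehari manifold $\mathcal N=\{u\neq0:\lVert\Delta u\rVert_{\ell^p}^p=\lVert u\rVert_q^q\}$. Every nontrivial solution of \eqref{eq:p-bihar} lies on $\mathcal N$, where $I(u)=(\tfrac1p-\tfrac1q)\lVert\Delta u\rVert_{\ell^p}^p$; the scaling $u\mapsto tu$ shows that minimizing $I$ over $\mathcal N$ is equivalent to minimizing the Sobolev quotient \eqref{eq:inf}, which is attained by $v$ via Theorem \ref{thm:main1}. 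Hence $u_0$ realizes the least action and is a ground state.

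The main obstacle is positivity. Because $\Delta$ is of second order, one cannot argue by the naive replacement $v\mapsto\lvert v\rvert$: the quantity $\lVert\Delta\lvert v\rvert\rVert_{\ell^p}$ may strictly exceed $\lVert\Delta v\rVert_{\ell^p}$ (already for $v=(1,0,-1)$ on a path), so the modulus need not be an admissible competitor. The mechanism I would use instead is positivity of the Green kernel: since $N\geq3$ the group $G$ is transient, so $-\Delta$ has a strictly positive Green kernel $g(x,y)>0$ and $(-\Delta)^{-1}$ is order-preserving on $\ell^q(G)$. Rewriting \eqref{eq:p-bihar} as $(-\Delta)\big(\lvert(-\Delta)u\rvert^{p-2}(-\Delta)u\big)=\lvert u\rvert^{q-2}u$ displays its solution operator as a composition $T=(-\Delta)^{-1}\circ\Phi^{-1}\circ(-\Delta)^{-1}$, where $\Phi(t)=\lvert t\rvert^{p-2}t$ is an odd increasing bijection; each factor is order-preserving and satisfies $\lvert T(f)\rvert\leq T(\lvert f\rvert)$. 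Thus from $u_0=T(\lvert u_0\rvert^{q-2}u_0)$ the nonnegative function $\bar u_0:=T(\lvert u_0\rvert^{q-1})$ dominates $\lvert u_0\rvert$. A comparison of the Sobolev quotients in \eqref{eq:inf}, carried out by H\"{o}lder's inequality using $\bar u_0\geq\lvert u_0\rvert\geq0$, then shows that $\bar u_0$ does not increase the quotient, so $\bar u_0$ is again a minimizer and, after the same rescaling, a nonnegative ground state; the strong maximum principle on the connected graph $G$ (positivity of $g$ together with $\bar u_0\not\equiv0$) finally upgrades this to $\bar u_0>0$ everywhere. Making the quotient comparison precise for general $p$, and checking that order-preservation survives the nonlinear middle factor $\Phi^{-1}$, is the step I expect to require the most care.
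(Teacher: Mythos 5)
Your proposal is correct and follows the same overall strategy as the paper: take the minimizer from Theorem \ref{thm:main1}, manufacture a nonnegative minimizer using the order-preserving property of $(-\Delta)^{-1}$ (positive Green kernel / maximum principle), and conclude by the Lagrange multiplier rule, rescaling, and the maximum principle again for strict positivity. The one place where you genuinely diverge is the construction of the nonnegative competitor. The paper takes the single inverse $v:=(-\Delta)^{-1}\lvert\Delta u\rvert$, for which the quotient comparison is immediate: $\Delta v=-\lvert\Delta u\rvert$ gives $\lVert\Delta v\rVert_{p}=\lVert\Delta u\rVert_{p}$ exactly, while the maximum principle gives $v\geq\lvert u\rvert$ and hence $\lVert v\rVert_{q}\geq\lVert u\rVert_{q}$, so $v/\lVert v\rVert_{q}$ is a nonnegative minimizer with no further computation. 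Your competitor $\bar u_{0}=T(\lvert u_{0}\rvert^{q-1})$ solves the full equation with right-hand side $\lvert u_{0}\rvert^{q-1}$, and the quotient comparison you flag as delicate does go through: summation by parts gives $\lVert\Delta\bar u_{0}\rVert_{p}^{p}=\sum\lvert u_{0}\rvert^{q-1}\bar u_{0}\leq\lVert u_{0}\rVert_{q}^{q-1}\lVert\bar u_{0}\rVert_{q}$ by H\"{o}lder, and combined with $\lVert\bar u_{0}\rVert_{q}\geq\lVert u_{0}\rVert_{q}$ and $\lVert\Delta u_{0}\rVert_{p}^{p}=\lVert u_{0}\rVert_{q}^{q}$ (from $u_{0}\in\mathcal{N}$) this yields $\lVert\Delta\bar u_{0}\rVert_{p}^{p}/\lVert\bar u_{0}\rVert_{q}^{p}\leq\lVert u_{0}\rVert_{q}^{q-p}=K$, so $\bar u_{0}$ is again a minimizer. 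The price of your route is that you must justify the summation by parts for functions that are not finitely supported (available via $D_{0}^{2,p}(G)=D^{2,p}(G)$, Theorem \ref{thm: the equivalence for G_k=00003D00003D00003D2}) and check that $\Phi^{-1}\circ(-\Delta)^{-1}(\lvert u_{0}\rvert^{q-1})$ lands in $\ell^{p}$, which holds because $q>p^{\ast\ast}$ is equivalent to $q'<N/2$ and one finishes with the embedding $\ell^{r}\hookrightarrow\ell^{p}$ for $r<p$ --- exactly the bookkeeping of Lemma \ref{lem: the equivalence}; the paper's one-step construction avoids both. On the other hand, your explicit rescaling $u_{0}=K^{1/(q-p)}v$ and the Nehari-manifold identification of the ground state make precise what the paper compresses into a terse appeal to ``the Lagrange multiplier,'' and your observation that the naive replacement $v\mapsto\lvert v\rvert$ fails for second-order norms correctly identifies why the Green-kernel mechanism is needed here.
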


Consider the Lane-Emden system 
\begin{equation}
\begin{cases}
\begin{array}{cc}
-\Delta u=\lvert v\rvert^{p^{\prime}-2}v\\
-\Delta v=\lvert u\rvert^{q-2}u\\
u\in D^{2,p}(G),v\in D^{2,q^{\prime}}(G),
\end{array}\end{cases}\label{eq:LE system}
\end{equation}
where $N\geq3$, $p,q>1$, $p^{\prime}=\frac{p}{p-1}$, $q^{\prime}=\frac{q}{q-1}$,
$(p^{\prime},q)$ lies on the critical hyperbola, that is, 
\[
\frac{1}{p^{\prime}}+\frac{1}{q}=\frac{N-2}{N}.
\]
Indeed, $(u,v)$ is a solution of (\ref{eq:LE system}) if and only
if $u$ is a solution of (\ref{eq:p-bihar}) with $v\coloneqq-\mid\Delta u\mid^{p-2}\Delta u$
inB critical cases. This relation is sometimes called reduction-by-inversion,
see \cite[Lemma 2.1]{CS} \cite{BdT,dD,R2} for continuous setting.
We can also prove the equivalence in supercritical cases for discrete
setting, see Lemma \ref{lem: the equivalence}. Hence, by Corollary
\ref{cor:p-bihar} we have a positive solution of the Lane-Emden system. 
\begin{cor}
\label{cor:L-E system}For $N\geq3$, $p,q>1$, $\frac{1}{p^{\prime}}+\frac{1}{q}<\frac{N-2}{N}$,
there is a pair of positive solution $(u,v)$ for the Lane-Emden system
(\ref{eq:LE system}). 
\end{cor}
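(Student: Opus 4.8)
The plan is to deduce Corollary \ref{cor:L-E system} directly from Corollary \ref{cor:p-bihar} through the reduction-by-inversion of Lemma \ref{lem: the equivalence}, so that the genuine content is the bookkeeping of exponents together with a maximum-principle argument for positivity. First I would translate the supercritical condition on the Lane--Emden pair into the hypotheses of Corollary \ref{cor:p-bihar}. Since $\frac{1}{p'}=1-\frac{1}{p}$, the inequality $\frac{1}{p'}+\frac{1}{q}<\frac{N-2}{N}$ is equivalent to $\frac{1}{q}<\frac{1}{p}-\frac{2}{N}=\frac{1}{p^{\ast\ast}}$, that is $q>p^{\ast\ast}$; moreover the right-hand side $\frac{1}{p}-\frac{2}{N}$ must be positive, which forces $1<p<\frac{N}{2}$. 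Thus the hypotheses $N\ge 3$, $1<p<\frac{N}{2}$, $q>p^{\ast\ast}$ of Corollary \ref{cor:p-bihar} hold. The same inequality also yields $\frac{1}{q'}=1-\frac{1}{q}>\frac{2}{N}$, hence $q'<\frac{N}{2}$, a fact I will use below to guarantee that the second component decays at infinity.

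Next, Corollary \ref{cor:p-bihar} produces a positive ground state $u\in D^{2,p}(G)$ of the $p$-biharmonic equation \eqref{eq:p-bihar}. Setting $v:=-|\Delta u|^{p-2}\Delta u$ as in Lemma \ref{lem: the equivalence}, the reduction-by-inversion shows that $v\in D^{2,q'}(G)$ and that $(u,v)$ solves \eqref{eq:LE system}: the second equation $-\Delta v=|u|^{q-2}u$ is just \eqref{eq:p-bihar} rewritten, while the first follows from the algebraic identity $|v|^{p'-2}v=|\Delta u|^{(p-1)(p'-2)}\bigl(-|\Delta u|^{p-2}\Delta u\bigr)=-\Delta u$, using $(p-1)(p'-2)=2-p$. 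Hence a solution pair exists, and it remains only to check that both components are positive.

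Positivity of $u$ is part of the conclusion of Corollary \ref{cor:p-bihar}. For $v$ I would invoke the discrete minimum principle. Since $u>0$, the second equation gives $-\Delta v=u^{q-1}>0$, so $v$ is superharmonic. Because $q'<\frac{N}{2}$, the inclusion $D^{2,q'}(G)\subset\ell^{(q')^{\ast\ast}}(G)$ with finite exponent forces $v(x)\to 0$ as $d^{S}(e,x)\to\infty$; consequently if $v$ took a negative value it would attain a negative minimum at some vertex $x_0$, where $\Delta v(x_0)=\sum_{y\sim x_0}\bigl(v(y)-v(x_0)\bigr)\ge 0$, contradicting $\Delta v(x_0)=-u(x_0)^{q-1}<0$. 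Thus $v\ge 0$; and if $v(x_0)=0$ then $\Delta v(x_0)=\sum_{y\sim x_0}v(y)\ge 0$ again contradicts $\Delta v(x_0)<0$, so $v>0$ everywhere, giving the desired pair of positive solutions.

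The step demanding the most care is the equivalence itself (Lemma \ref{lem: the equivalence}): one must verify not only the formal algebraic identities but that $v=-|\Delta u|^{p-2}\Delta u$ genuinely lies in $D^{2,q'}(G)$ and that applying $\Delta$ and inverting is legitimate on the discrete Sobolev spaces in the supercritical regime — this is where the norm equivalences and the second-order Sobolev inequality of the earlier sections enter. Granting that lemma, the arguments above are routine, and the principal remaining subtlety is securing the decay of $v$ so that the discrete maximum principle is applicable.
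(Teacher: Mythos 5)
Your proposal follows the paper's proof exactly: invoke Corollary \ref{cor:p-bihar} to obtain a positive $u$, pass to the pair $(u,v)$ with $v=-\lvert\Delta u\rvert^{p-2}\Delta u$ via Lemma \ref{lem: the equivalence}, and conclude $v>0$ from $-\Delta v=\lvert u\rvert^{q-2}u>0$ by the maximum principle. The extra details you supply (the exponent bookkeeping showing $q>p^{\ast\ast}$ and $q'<N/2$, and the decay of $v$ needed to apply the discrete minimum principle) are correct elaborations of the same argument, which the paper states more tersely.
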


\begin{conjecture}
According to the results in continuous cases \cite[Corollary I.2]{L3}
\cite{L6,CL,WX,SZ1,BM,PQS,M1,S1,dF,MY19,WY13,W10}, we conjecture
that (\ref{eq:p-bihar}) and (\ref{eq:LE system}) have positive solutions
in critical cases and the non-negative solutions in subcritical cases
are trivial. 
\end{conjecture}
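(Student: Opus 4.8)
The conjecture comprises two essentially independent assertions, and I would attack them by different routes. The first is existence of a positive solution in the \emph{critical} case $q=p^{\ast\ast}$, and here the plan is to reuse the variational machinery behind Theorem~\ref{thm:main1}: minimize $\lVert u\rVert_{D^{2,p}}^{p}$ over $\lVert u\rVert_{q}=1$, pass to a minimizing sequence, and analyze it with the discrete Concentration--Compactness principle (Lemma~\ref{lem:Concentration-Compact}). By homogeneity the concentration function satisfies $I_{\lambda}=\lambda^{p/q}K$, so dichotomy is excluded by the \emph{strict} subadditivity $1<\alpha^{p/q}+(1-\alpha)^{p/q}$ for $0<\alpha<1$, which holds whenever $p/q<1$ and hence in the critical case just as in the supercritical one. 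Having produced a minimizer $v$, one would set $u=|v|$ (using a discrete Kato inequality $\lVert\Delta|v|\rVert_{p}\le\lVert\Delta v\rVert_{p}$), derive the Euler--Lagrange equation (\ref{eq:p-bihar}), and upgrade non-negativity to strict positivity by a discrete maximum principle; the Lane--Emden statement (\ref{eq:LE system}) would then follow by the reduction-by-inversion of Lemma~\ref{lem: the equivalence}, exactly as in Corollary~\ref{cor:L-E system}.

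\textbf{The main obstacle} is that the step which was free for $q>p^{\ast\ast}$ breaks down at $q=p^{\ast\ast}$: the non-vanishing lower bound of Lemma~\ref{lem:lower bound} rests on the interpolation $1=\sum_{G}|u_{n}|^{q}\le(\sup_{G}|u_{n}|)^{\,q-p^{\ast\ast}}\lVert u_{n}\rVert_{p^{\ast\ast}}^{p^{\ast\ast}}$, which forces $\sup_{G}|u_{n}|\ge c>0$ only because the exponent $q-p^{\ast\ast}$ is strictly positive. At the critical exponent this exponent vanishes and the argument gives no control, so vanishing (a minimizing sequence that spreads out with $\sup_{G}|u_{n}|\to0$) can no longer be excluded by elementary means. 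In $\mathbb{R}^{N}$ this is precisely where Lions rescales using the dilation invariance of the critical functional, a symmetry with no analogue on a Cayley graph. My plan is therefore a Brezis--Nirenberg type threshold argument: show that the discrete critical best constant is \emph{strictly} smaller than the Euclidean one, so that any Palais--Smale or minimizing sequence sitting below the first bubbling level cannot shed mass into a rescaled Euclidean profile and must stay compact. The technical heart---transplanting Aubin--Talenti bubbles to $(G,S)$ and controlling the discretization error of $\Delta$ and $\nabla^{2}$ when comparing $K_{G}$ with $K_{\mathbb{R}^{N}}$---is where I expect the real work to lie.

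For the second assertion, \emph{triviality of non-negative solutions in the subcritical case} $q<p^{\ast\ast}$, neither Pohozaev identities nor moving-plane reflections are available, since $(G,S)$ carries no dilation field and no reflection symmetry. Instead I would adapt the nonlinear capacity (test-function) method to the polynomial-growth setting. Given a non-negative solution $u$ of (\ref{eq:p-bihar}), test the equation against $\varphi=\eta_{R}^{m}$, where $\eta_{R}$ is a cutoff equal to $1$ on $B_{e}^{S}(R)$ and supported in $B_{e}^{S}(2R)$ and $m$ is large, transfer the discrete Laplacians onto $\varphi$ by summation by parts, and absorb the boundary terms with Young's inequality. Using $\lvert\Delta\eta_{R}^{m}\rvert\lesssim R^{-2}\eta_{R}^{m-1}$ together with the volume bound $\beta_{S}(n)\le Cn^{N}$ should yield a decay estimate of the form $\sum_{G}|u|^{q}\lesssim R^{N-c}\to0$ as $R\to\infty$, forcing $u\equiv0$. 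The difficulty---and the reason this remains a conjecture---is that the capacity method reaches nonexistence only up to the Serrin exponent, strictly below the Sobolev threshold $p^{\ast\ast}$; extending triviality across the whole subcritical range would require a genuinely discrete analogue of the Gidas--Spruck integral estimates, which is not presently available on Cayley graphs of polynomial growth.
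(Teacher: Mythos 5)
You were asked to prove what the paper itself states only as a conjecture: the authors give no proof of either assertion, and your proposal, by your own candid admission, does not close it either. Both halves of your plan terminate exactly at the open problems that constitute the conjecture's content: for the critical case $q=p^{\ast\ast}$, the Brezis--Nirenberg-type strict threshold comparison $K_{G}<K_{\mathbb{R}^{N}}$ together with a bubbling/profile decomposition on a Cayley graph is precisely the missing technology (and it is not even clear what the ``first bubbling level'' should mean on $(G,S)$, where there is no dilation group and where the relation between the discrete and Euclidean critical constants is itself unknown); for the subcritical case, you correctly observe that the nonlinear capacity method stalls at the Serrin exponent, strictly below $p^{\ast\ast}$. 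So what you have written is a sensible research program, not a proof, and it should be presented as such. Your diagnosis of \emph{why} the paper's supercritical argument fails at criticality is accurate and matches the structure of the paper: Lemma \ref{lem:lower bound} interpolates with some $q'$ strictly between $p^{\ast\ast}$ and $q$, and at $q=p^{\ast\ast}$ no such $q'$ exists, so the nonvanishing bound (\ref{eq:v(0)>0}) --- the only mechanism the paper uses to rule out vanishing in both Proof I and Proof II of Theorem \ref{thm:main1} --- is lost.

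One step in your sketch is moreover mathematically wrong, not merely incomplete: the passage ``set $u=|v|$ using a discrete Kato inequality $\lVert\Delta|v|\rVert_{p}\leq\lVert\Delta v\rVert_{p}$.'' No such inequality holds for second-order energies; taking absolute values does not decrease $\lVert\Delta\cdot\rVert_{p}$. On $\mathbb{Z}$, take $v(-1)=-1$, $v(0)=0$, $v(1)=1$: then $\Delta v(0)=0$ while $\Delta|v|(0)=2$, so $|\Delta|v||$ can strictly exceed $|\Delta v|$ pointwise, and this is the well-known obstruction that makes positivity delicate for biharmonic-type problems even in $\mathbb{R}^{N}$. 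The paper circumvents it in the supercritical case (Corollary \ref{cor:p-bihar}) by a different device: solve $-\Delta v=|\Delta u|$, use the maximum principle to get $|u|\leq v$ with $\lVert\Delta v\rVert_{p}=\lVert\Delta u\rVert_{p}$, and conclude that $v/\lVert v\rVert_{q}$ is a non-negative minimizer; positivity then follows from the maximum principle, and the Lane--Emden statement via Lemma \ref{lem: the equivalence}. Any critical-case existence argument would have to use this comparison construction (or something equivalent) rather than your Kato step, so even if your threshold program succeeded in producing a minimizer, the positivity upgrade as you wrote it would fail.
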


The paper is organized as follows. In Section 2, we recall some basic
facts and prove some useful lemmas. In Section 3, we study the important
equivalence of Sobolev spaces and prove the discrete second-order
Sobolev inequality using the boundedness of Riesz transforms and the
functional calculus. In Section 4, we introduce the Brézis-Lieb lemma
and prove the Concentration-Compactness principle on $G$. In Section
5, we prove a key lemma to exclude the vanishing case and give two
proofs for Theorem \ref{thm:main1}. As applications, we get the existence
results for $p$-biharmonic equations and Lane-Emden systems in Section
6. In Section 7, we prove the Hodge decomposition theorem on $1$-forms
on edges for $p=2$ and study the existence of extremal functions
for $p=1$.

\section{Preliminaries}

Let $G$ be a countable group. It is called finitely generated if
it has a finite generating set $S$. We always assume that the generating
set $S$ is symmetric, i.e. $S=S^{-1}$. The Cayley graph of $(G,S)$
is a graph structure $(V,E)$ with the set of vertices $V=G$ and
the set of edges $E$ where for any $x,y\text{\ensuremath{\in}}G,xy\in E$
(also denoted by $x\sim y$) if $x=ys$ for some $s\in S$. The Cayley
graph of $(G,S)$ is endowed with a natural metric, called the word
metric \cite{BBI01}: For any $x,y\in G$, the distance between them
is defined as the length of the shortest path connecting $x$ and
$y$ by assigning each edge of length one, 
\[
d^{S}(x,y)=\text{inf}{\left\{ k:x=x_{0}\sim\cdot\cdot\cdot\sim x_{k}=y\right\} .}
\]
One easily sees that for two generating sets $S$ and $S_{1}$ the
metrics $d^{S}$ and $d^{S_{1}}$ are bi-Lipschitz equivalent, i.e.
there exist two constants $C_{1}(S,S_{1}),C_{2}(S,S_{1})$ such that
for any $x,y\in G$ 
\[
C_{1}(S,S_{1})d^{S_{1}}(x,y)\leq d^{S}(x,y)\leq C_{2}(S,S_{1})d^{S_{1}}(x,y).
\]
Let $B_{p}^{S}(n):={\left\{ x\in G\mid d^{S}(p,x)\leq n\right\} }$
denote the closed ball of radius $n$ centered at $p\in G$. By the
group structure, it is obvious that $\mid B_{p}^{S}(n)\mid=\mid B_{q}^{S}(n)\mid$,
for any $p,q\in G$. The growth function of $(G,S)$ is defined as
$\beta_{S}(n):=\mid B_{e}^{S}(n)\mid$ where $e$ is the unit element
of $G$. A group $G$ is called of polynomial growth if there exists
a finite generating set $S$ such that $\beta_{S}(n)\leq Cn^{A}$
for some $C,A>0$ and any $n\geq1$. One checks that this definition
is independent of the choice of the generating set $S$. Thus, the
polynomial growth is indeed a property of the group $G$. In this
paper, we consider the Cayley graph $(G,S)$ of a group of polynomial
growth 
\[
C_{1}(S)n^{N}\leq\beta_{S}(n)\leq C_{2}(S)n^{N},
\]
for some $N\in\mathbb{N}$ and any $n\geq1$, where $N$ is called
the homogenous dimension of $G$.

We denote by $C(G)$ the space of functions on $G$. The support of
$u\in C(G)$ is defined as $\textrm{supp}(u)\coloneqq\{x\in G:u(x)\neq0\}$.
Let $C_{0}(G)$ be the set of all functions with finite support. For
any $u\in C(G)$, the $\ell^{p}$ norm of $u$ is defined as 
\[
\lVert u\rVert_{\ell^{p}(G)}\coloneqq\begin{cases}
\left(\underset{x\in G}{\sum}\lvert u(x)\rvert^{p}\right)^{1/p} & \text{\ensuremath{0<p<\infty,}}\\
\underset{x\in G}{\sup}\lvert u(x)\rvert & p=\infty,
\end{cases}
\]
and we shall write $\lVert u\rVert_{{\ell^{p}(G)}}$ as $\lVert u\rVert_{{p}}$
for convenience. The $\ell^{p}(G)$ space is defined as 
\[
\ell^{p}(G)\coloneqq\left\{ u\in C(G):\lVert u\rVert_{{\ell^{p}(G)}}<\infty\right\} .
\]
For any $u\in C(G)$, the difference operator is defined as for any
$x\sim y$

\[
\nabla_{xy}u=u(y)-u(x).
\]
In particular, for the Cayley graph $(G,S)$ with symmetric generating
set $S\coloneqq\left\{ s_{1},s_{2},\cdots,s_{m}\right\} $, we can
define the difference in $i$-th direction as 
\[
\partial_{i}u(x)\coloneqq u(xs_{i})-u(x).
\]
Let 
\[
\lvert\nabla u(x)\rvert_{p}\coloneqq\left(\sum\limits _{y\sim x}\lvert\nabla_{xy}u\rvert^{p}\right)^{1/p}
\]
be the $p$-norm of the gradient of $u$ at $x$.

We define the Laplace operator as 
\[
\Delta u(x)\coloneqq\sum\limits _{y\sim x}(u(y)-u(x)).
\]
We write $p$-norm of the Hessian of $u$ as 
\[
\lVert\nabla^{2}u\rVert_{p}\coloneqq\left(\stackrel[i,j=1]{m}{\sum}\underset{x\in G}{\sum}\lvert\partial_{j}\partial_{i}u(x)\lvert^{p}\right)^{1/p}=\left(\stackrel[i,j=1]{m}{\sum}\lVert\partial_{j}\partial_{i}u(x)\lVert_{p}^{p}\right)^{1/p}.
\]
The $D^{k,p}$ ($k=1,2$) norms of $u$ are given by 
\[
\lVert u\rVert_{D^{1,p}(G)}\coloneqq\lVert\lvert\nabla u\rvert_{p}^{p}\rVert_{\ell^{1}(G)}^{1/p}=\left(\sum\limits _{x\in G}\sum\limits _{y\sim x}\lvert\nabla_{xy}u\rvert^{p}\right)^{1/p},
\]
\[
\lVert u\rVert_{D^{2,p}(G)}\coloneqq\lVert\Delta u\rVert_{\ell^{p}(G)}=\left(\sum\limits _{x\in G}\lvert\sum\limits _{y\sim x}\nabla_{xy}u\rvert^{p}\right)^{1/p}.
\]
We use the Hessian operator to define $\widetilde{D}^{2,p}$ norm
as 
\[
\lVert u\rVert_{\widetilde{D}^{2,p}(G)}\coloneqq\lVert\nabla^{2}u\rVert_{p}.
\]

Since $G$ is a regular graph, that is, the degree $d_{x}\coloneqq\sharp\left\{ y:y\sim x\right\} $
is constant for $x\in G$, we have 
\[
\lVert\nabla u\lVert_{p}\coloneqq\lVert\lvert\nabla u(x)\rvert_{1}\lVert_{p}\sim\lVert\lvert\nabla u(x)\rvert_{p}^{p}\lVert_{1}^{1/p}=\lVert u\rVert_{D^{1,p}}.
\]
In this paper, we use $a\lesssim b$ to denote $a\leq Cb$ for some
$C>0$ and $a\sim b$ to denote $a\lesssim b\lesssim a$.

We define $D_{0}^{k,p}(G)$ ($k=1,2$) and $\widetilde{D}_{0}^{2,p}(G)$
as the completion of $C_{0}\left(G\right)$ in $D^{k,p}$ norm and
$\widetilde{D}^{2,p}$ norm respectively. And define 
\[
D^{k,p}(G)\coloneqq\left\{ u\in\ell^{\frac{Np}{N-kp}}(G):\lVert u\rVert_{D^{k,p}(G)}<\infty\right\} ,
\]
\[
\widetilde{D}^{2,p}(G)\coloneqq\left\{ u\in\ell^{\frac{Np}{N-2p}}(G):\lVert u\rVert_{\widetilde{D}^{2,p}(G)}<\infty\right\} .
\]
Let $c_{0}(G)$ be the completion of $C_{0}\left(G\right)$ in $\ell^{\infty}$
norm. It is well-known that $\ell^{1}(G)=(c_{0}(G))^{\ast}$. We set
\[
\lVert\mu\rVert\coloneqq\sup\limits _{u\in c_{0}(G),\lVert u\rVert_{\infty}=1}\langle\mu,u\rangle,\qquad\forall\mu\in\ell^{1}(G).
\]
By definition, 
\[
\mu_{n}\xrightharpoonup{w^{\ast}}\mu\;\text{in}\;\ell^{1}(G)\;\text{if and only if}\;\langle\mu_{n},u\rangle\longrightarrow\langle\mu,u\rangle,\forall u\in c_{0}(G).
\]
In the proof, we will use the following results, see \cite{C2}. 
\begin{fact*}
(a)Every bounded sequence of $\ell^{1}(G)$ contains a $w^{\ast}$-convergent
subsequence. \\
 (b)If $\mu_{n}\xrightharpoonup{w^{\ast}}\mu$ in $\ell^{1}(G)$,
then ${\mu_{n}}$ is bounded and 
\[
\lVert\mu\rVert\leq\varliminf\limits _{n\to\infty}\lVert\mu_{n}\rVert.
\]
(c)If $\mu\in\ell^{1+}(G):=\left\{ \mu\in\ell^{1}(G):\mu\geq0\right\} $,
then 
\[
\lVert\mu\rVert=\langle\mu,1\rangle.
\]
\end{fact*}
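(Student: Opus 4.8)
The plan is to derive all three assertions from the single structural input that $\ell^{1}(G)=(c_{0}(G))^{\ast}$ isometrically (with the pairing $\langle\mu,u\rangle=\sum_{x\in G}\mu(x)u(x)$) together with the separability of $c_{0}(G)$. The latter holds because $G$ is countable: the finitely supported, rational-valued functions form a countable set that is dense in $c_{0}(G)$ for the $\ell^{\infty}$ norm. With these facts in hand, (a) is sequential Banach--Alaoglu, (b) is Banach--Steinhaus plus lower semicontinuity of a supremum, and (c) is a direct computation.

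For (a), I would argue by a Cantor diagonal extraction rather than quoting the abstract theorem, since it makes the $\ell^{1}$ structure explicit. Let $\{\mu_{n}\}\subset\ell^{1}(G)$ with $\lVert\mu_{n}\rVert\leq M$, and enumerate $G=\{x_{1},x_{2},\dots\}$. Because $\lvert\mu_{n}(x_{k})\rvert\leq\lVert\mu_{n}\rVert\leq M$ for every $k$, a diagonal argument yields a subsequence, still written $\{\mu_{n}\}$, with $\mu_{n}(x_{k})\to\mu(x_{k})$ for each $k$; Fatou's lemma then gives $\sum_{k}\lvert\mu(x_{k})\rvert\leq M$, so $\mu\in\ell^{1}(G)$. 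The remaining task is to upgrade pointwise convergence to $\langle\mu_{n},u\rangle\to\langle\mu,u\rangle$ for every $u\in c_{0}(G)$: given $\varepsilon>0$, choose a finite $F\subset G$ with $\lvert u\rvert<\varepsilon$ off $F$, and split $\langle\mu_{n}-\mu,u\rangle$ into the finite sum over $F$, which tends to $0$ by pointwise convergence, and a tail bounded by $2M\varepsilon$ using $\lVert u\rVert_{\infty}$ and the uniform $\ell^{1}$ bound. This tail estimate is the one step that requires genuine care, and it is precisely where the hypothesis $u\in c_{0}(G)$ (vanishing at infinity) is indispensable; it is the main obstacle in an otherwise routine argument.

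For (b), boundedness of $\{\mu_{n}\}$ follows from the uniform boundedness principle. For each fixed $u\in c_{0}(G)$ the scalars $\langle\mu_{n},u\rangle$ converge and are therefore bounded, so Banach--Steinhaus applied to the functionals $\mu_{n}$ on the Banach space $c_{0}(G)$ gives $\sup_{n}\lVert\mu_{n}\rVert<\infty$. For the lower semicontinuity, I would fix any $u\in c_{0}(G)$ with $\lVert u\rVert_{\infty}=1$; then $\langle\mu,u\rangle=\lim_{n}\langle\mu_{n},u\rangle\leq\varliminf_{n\to\infty}\lVert\mu_{n}\rVert$, and taking the supremum over all such $u$ yields $\lVert\mu\rVert\leq\varliminf_{n\to\infty}\lVert\mu_{n}\rVert$.

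For (c), let $\mu\geq0$. The upper bound is immediate: for every $u\in c_{0}(G)$ with $\lVert u\rVert_{\infty}=1$ we have $\langle\mu,u\rangle\leq\sum_{x}\mu(x)\lvert u(x)\rvert\leq\sum_{x}\mu(x)=\langle\mu,1\rangle$, whence $\lVert\mu\rVert\leq\langle\mu,1\rangle$. For the reverse inequality I would test against the finitely supported indicators $\mathbf{1}_{F}\in c_{0}(G)$, which satisfy $\lVert\mathbf{1}_{F}\rVert_{\infty}=1$ for nonempty finite $F$: then $\lVert\mu\rVert\geq\langle\mu,\mathbf{1}_{F}\rangle=\sum_{x\in F}\mu(x)$, and letting $F\uparrow G$ the right-hand side increases to $\langle\mu,1\rangle$ by monotone convergence. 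Combining the two bounds gives $\lVert\mu\rVert=\langle\mu,1\rangle$.
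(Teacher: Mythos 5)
Your proposal is correct. Note, however, that the paper offers no proof of this Fact at all: it is stated as a known result with a citation to a standard functional analysis reference, so there is no in-paper argument to compare against. Your write-up supplies exactly the details the paper leaves implicit, and along the standard lines: you correctly identify that the countability of $G$ (hence separability of $c_{0}(G)$, via finitely supported rational-valued functions) is what makes the \emph{sequential} Banach--Alaoglu theorem available, and your concrete diagonal extraction in (a) --- pointwise limits on an enumeration of $G$, Fatou to place the limit in $\ell^{1}$, then the finite-set/tail splitting to upgrade pointwise convergence to $w^{\ast}$-convergence --- is the standard proof made explicit in the $\ell^{1}$--$c_{0}$ duality; parts (b) and (c) are the routine Banach--Steinhaus and monotone-convergence arguments, and both go through against the paper's definition of $\lVert\mu\rVert$ as a supremum of pairings (no absolute values needed, since one may replace $u$ by $-u$). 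One small wording slip in (a): the tail estimate yielding $2M\varepsilon$ rests on the smallness $\lvert u\rvert<\varepsilon$ off the finite set $F$ together with the uniform bound $\lVert\mu_{n}-\mu\rVert\leq 2M$, not on $\lVert u\rVert_{\infty}$ as you state; the bound itself, and your observation that this is precisely where $u\in c_{0}(G)$ is indispensable, are correct.
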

Next we give a discrete chain rule. 
\begin{lem}
\label{lem:chain rule}For a weighted graph $G=(V,w,m)$, where $w_{xy}$,
$m(x)$ denote the weights assigned to the edge $xy\in E$ and to
the vertex $v$, respectively. Let $\phi\in C^{2}(\mathbb{R})$, $f\in C(V)$,
then for some\textup{ $\xi\in[m,M]$,} 
\[
\Delta\phi(f)(x)=\phi'(f(x))\Delta f(x)+\phi''(\xi)\Gamma f(x),
\]
where $\Delta f(x)\coloneqq\underset{y\sim x}{\sum}\frac{w_{xy}}{m(x)}\left(f\left(y\right)-f(x)\right)$,
$\Gamma f(x)\coloneqq\frac{1}{2}\underset{y\sim x}{\sum}\frac{w_{xy}}{m(x)}\mid f(y)-f(x)\mid^{2}$
and $m\coloneqq\underset{y\in B_{x}(1)}{\textrm{min}}f(y)$, $M\coloneqq\underset{y\in B_{x}(1)}{\text{max}}f(y)$. 
\end{lem}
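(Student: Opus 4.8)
The plan is to expand $\Delta\phi(f)(x)$ edge by edge, apply Taylor's theorem with Lagrange remainder on each edge, and then merge the resulting collection of intermediate points into a single $\xi$ by a weighted mean-value argument. First I would write, straight from the definition of $\Delta$,
\[
\Delta\phi(f)(x)=\sum_{y\sim x}\frac{w_{xy}}{m(x)}\bigl(\phi(f(y))-\phi(f(x))\bigr).
\]
For each neighbour $y$ of $x$, Taylor's theorem applied to $\phi\in C^{2}(\mathbb{R})$ between $f(x)$ and $f(y)$ produces a point $\xi_{y}$ between $f(x)$ and $f(y)$ (hence $\xi_{y}\in[m,M]$, since both $f(x)$ and $f(y)$ lie in $[m,M]$) with
\[
\phi(f(y))-\phi(f(x))=\phi'(f(x))\bigl(f(y)-f(x)\bigr)+\tfrac12\phi''(\xi_{y})\bigl(f(y)-f(x)\bigr)^{2}.
\]
Substituting and separating the two sums, the first-order terms assemble into $\phi'(f(x))\sum_{y\sim x}\frac{w_{xy}}{m(x)}(f(y)-f(x))=\phi'(f(x))\Delta f(x)$, which is exactly the first term in the claim. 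Here I use that the graph is locally finite, so each neighbour sum is finite; this holds for the Cayley graphs under consideration.

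It then remains to show that the remainder sum
\[
R(x):=\sum_{y\sim x}\frac{w_{xy}}{m(x)}\cdot\tfrac12\,\phi''(\xi_{y})\bigl(f(y)-f(x)\bigr)^{2}
\]
equals $\phi''(\xi)\,\Gamma f(x)$ for a single $\xi\in[m,M]$. The key observation is that the nonnegative numbers $a_{y}:=\frac{w_{xy}}{m(x)}\cdot\tfrac12\bigl(f(y)-f(x)\bigr)^{2}$ satisfy $\sum_{y\sim x}a_{y}=\Gamma f(x)$, so $R(x)=\sum_{y\sim x}a_{y}\,\phi''(\xi_{y})$ is a nonnegatively weighted combination of the values $\phi''(\xi_{y})$. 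Hence, when $\Gamma f(x)>0$, the ratio $R(x)/\Gamma f(x)$ lies between $\min_{y}\phi''(\xi_{y})$ and $\max_{y}\phi''(\xi_{y})$, and in particular between the minimum and maximum of $\phi''$ on $[m,M]$. Since $\phi''$ is continuous on the connected interval $[m,M]$, the intermediate value theorem furnishes $\xi\in[m,M]$ with $\phi''(\xi)=R(x)/\Gamma f(x)$, giving $R(x)=\phi''(\xi)\,\Gamma f(x)$ as desired.

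The only remaining case is $\Gamma f(x)=0$, which forces $f(y)=f(x)$ for every neighbour $y$ carrying positive weight; then both $R(x)$ and $\Gamma f(x)$ vanish, and one may take $\xi=f(x)\in[m,M]$. I expect the main (though mild) subtlety to be precisely this packaging of the many edgewise intermediate points $\xi_{y}$ into one $\xi$: the honest content is the weighted mean-value step, and care is needed both to handle the degenerate neighbourhood and to invoke that the weighted average of values of a continuous function on a connected interval is itself attained by that function. Beyond continuity of $\phi''$ and local finiteness of $G$, no further estimate is required.
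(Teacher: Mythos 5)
Your proposal is correct and follows essentially the same route as the paper: Taylor expansion with Lagrange remainder on each edge, assembling the first-order terms into $\phi'(f(x))\Delta f(x)$, and merging the edgewise points $\xi_{y}$ into a single $\xi$ via continuity of $\phi''$ and the intermediate value theorem. You are in fact slightly more careful than the paper, which omits the degenerate case $\Gamma f(x)=0$ and does not spell out the weighted mean-value step.
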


\begin{proof}
With $s=f(x)$ and $t_{y}=f(y)-f(x)$, Taylor's theorem gives 
\[
\phi(s+t_{y})=\phi(s)+t_{y}\phi'(s)+\frac{1}{2}t_{y}^{2}\phi''(\xi_{y})
\]
for some $\xi_{y}\in[s\wedge(s+t_{y}),s\vee(s+t_{y})]$. Summing up
gives 
\begin{align*}
\Delta\phi(f)(x) & =\underset{y\sim x}{\sum}\frac{w_{xy}}{m(x)}\left(\phi\left(s+t_{y}\right)-\phi(s)\right)\\
 & =\underset{y\sim x}{\sum}\frac{w_{xy}}{m(x)}\left(t_{y}\phi'\left(s\right)+\frac{1}{2}t_{y}^{2}\phi''(\xi_{y})\right)\\
 & =\underset{y\sim x}{\sum}\frac{w_{xy}}{m(x)}\left(t_{y}\phi'\left(s\right)+\frac{1}{2}t_{y}^{2}\phi''(\xi)\right)\\
 & =\phi'(f(x))\Delta f(x)+\phi''(\xi)\Gamma f(x),
\end{align*}
where in the third identity, we can choose some $\xi$ between minimum
and maximum of the $\xi_{y}$ by continuity of $\phi''$, that is,
\[
m\leq\xi\leq M.
\]
This finishes the proof. 
\end{proof}

\section{Sobolev spaces and second-order Sobolev inequalities}

In this section, inspired by the results in the continuous setting,
we study some important properties of Sobolev spaces defined in Section
2. By induction on the discrete first-order Sobolev inequalities,
we prove the second-order Sobolev inequalities using the boundedness
of Riesz transforms and the functional calculus.

First, we introduce a key lemma of norm equivalence using Dungey's
result \cite[Theorem 1]{N}. 
\begin{lem}
\label{lem: norm equivalence}Let Cayley graph $(G,S)$ is a nilpotent
group of polynomial growth with symmetric generating set $S\coloneqq\left\{ s_{1},s_{2},\cdots,s_{m}\right\} $,
for $p>1,$ if $u\in C_{0}(G)$, then 
\[
\lVert u\rVert_{\widetilde{D}^{2,p}(G)}\sim\lVert u\rVert_{D^{2,p}(G)}.
\]
Moreover, $D_{0}^{2,p}(G)=$$\widetilde{D}_{0}^{2,p}(G)$. 
\end{lem}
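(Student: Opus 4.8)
The plan is to prove the two-sided estimate on $C_{0}(G)$ by separating the two inequalities---an elementary algebraic identity for the easy one and Dungey's Riesz transform bound for the hard one---and then to upgrade the norm equivalence to an identity of completions.

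First I would establish the easy bound $\lVert u\rVert_{D^{2,p}}\lesssim\lVert u\rVert_{\widetilde{D}^{2,p}}$ by writing the Laplacian as a fixed finite linear combination of Hessian entries. Since $S$ is symmetric, the neighbours of $x$ are exactly $xs_{1},\dots,xs_{m}$, so $\Delta u(x)=\sum_{k=1}^{m}\partial_{k}u(x)$. Pairing each generator $s_{k}$ with the index $k'$ for which $s_{k'}=s_{k}^{-1}$ and using $\partial_{j}\partial_{i}u(x)=u(xs_{j}s_{i})-u(xs_{j})-u(xs_{i})+u(x)$, a direct computation gives
\[
\partial_{k}u(x)+\partial_{k'}u(x)=u(xs_{k})+u(xs_{k}^{-1})-2u(x)=-\partial_{k}\partial_{k'}u(x)
\]
when $k\neq k'$, and $\partial_{k}u(x)=-\tfrac{1}{2}\partial_{k}\partial_{k}u(x)$ when $s_{k}=s_{k}^{-1}$ is an involution. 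Hence $\Delta u$ is a fixed finite linear combination of the second differences $\partial_{j}\partial_{i}u$, and the triangle inequality in $\ell^{p}$ together with $\lVert\partial_{j}\partial_{i}u\rVert_{p}\leq\lVert\nabla^{2}u\rVert_{p}$ yields $\lVert\Delta u\rVert_{p}\lesssim\lVert\nabla^{2}u\rVert_{p}$. No growth hypothesis is needed for this direction.

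The reverse inequality $\lVert u\rVert_{\widetilde{D}^{2,p}}\lesssim\lVert u\rVert_{D^{2,p}}$ is where Dungey's theorem enters, and this is the main obstacle. I would write each Hessian entry as the image of $\Delta u$ under a second-order Riesz transform, $\partial_{j}\partial_{i}u=\left(\partial_{j}\partial_{i}\Delta^{-1}\right)\Delta u$. Because $N\geq3$ the random walk is transient, so the Green operator $\Delta^{-1}$ is well defined on $C_{0}(G)$ and recovers $u$ from $\Delta u\in C_{0}(G)$. Dungey's result \cite[Theorem 1]{N} provides, on a nilpotent group of polynomial growth, the $\ell^{p}(G)$-boundedness of the operators $\partial_{j}\partial_{i}\Delta^{-1}$ for $1<p<\infty$; applying this to $g=\Delta u$ gives $\lVert\partial_{j}\partial_{i}u\rVert_{p}\lesssim\lVert\Delta u\rVert_{p}$, and summing over the finitely many pairs $(i,j)$ produces $\lVert\nabla^{2}u\rVert_{p}\lesssim\lVert\Delta u\rVert_{p}$. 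This is exactly the step that forces $p>1$: the Riesz transform is a Calder\'on--Zygmund type singular operator, bounded on $\ell^{p}$ only for $1<p<\infty$, which is why the endpoint $p=1$ is excluded from the statement.

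Finally, to deduce $D_{0}^{2,p}(G)=\widetilde{D}_{0}^{2,p}(G)$, I would note that both spaces are completions of the common dense subspace $C_{0}(G)$, one in $\lVert\cdot\rVert_{D^{2,p}}$ and the other in $\lVert\cdot\rVert_{\widetilde{D}^{2,p}}$. Realizing both completions inside the common ambient space $\ell^{\frac{Np}{N-2p}}(G)$, into which $C_{0}(G)$ embeds by the second-order Sobolev inequality, the equivalence of the two norms on $C_{0}(G)$ shows that a sequence is Cauchy for one norm if and only if it is Cauchy for the other, with the same limit. Hence the two completions coincide as normed spaces, giving $D_{0}^{2,p}(G)=\widetilde{D}_{0}^{2,p}(G)$.
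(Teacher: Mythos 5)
Your proposal is correct and follows essentially the same route as the paper: the identity $-\Delta u=\tfrac{1}{2}\sum_{i}\partial_{s_{i}^{-1}}\partial_{s_{i}}u$ (which you derive explicitly by pairing each generator with its inverse) gives $\lVert\Delta u\rVert_{p}\lesssim\lVert\nabla^{2}u\rVert_{p}$, Dungey's theorem gives the reverse bound $\lVert\partial_{j}\partial_{i}u\rVert_{p}\lesssim\lVert\Delta u\rVert_{p}$, and the equality of completions follows from the equivalence of the norms on the common dense subspace $C_{0}(G)$. The only cosmetic difference is that you phrase Dungey's bound via the second-order Riesz transform $\partial_{j}\partial_{i}\Delta^{-1}$ and realize the completions inside $\ell^{Np/(N-2p)}(G)$, neither of which changes the argument.
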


\begin{proof}
Since $-\Delta u(x)=\frac{1}{2}\stackrel[i=1]{m}{\sum}\partial_{s_{i}^{-1}}\partial_{s_{i}}u(x),$
then 
\begin{align*}
\lVert\Delta u\rVert_{p} & =\left(\underset{x}{\sum}\mid\frac{1}{2}\stackrel[i=1]{m}{\sum}\partial_{s_{i}^{-1}}\partial_{s_{i}}u(x)\mid^{p}\right)^{1/p}\\
 & \lesssim\left(\underset{x}{\sum}\stackrel[i=1]{m}{\sum}\mid\partial_{s_{i}^{-1}}\partial_{s_{i}}u(x)\mid^{p}\right)^{1/p}\\
 & \leq\left(\underset{x}{\sum}\stackrel[i,j=1]{m}{\sum}\mid\partial_{s_{j}}\partial_{s_{i}}u(x)\mid^{p}\right)^{1/p}=\lVert\nabla^{2}u\rVert_{p}.
\end{align*}
Using Dungey's result in \cite[Theorem 1]{N}, we know for any $i,j$,
\begin{equation}
\lVert\partial_{s_{j}}\partial_{s_{i}}u(x)\rVert_{p}\lesssim\lVert\Delta u\rVert_{p},\label{eq: hessian_laplace inequality}
\end{equation}
which implies 
\[
\lVert\nabla^{2}u\rVert_{p}\lesssim\lVert\Delta u\rVert_{p}.
\]
Hence the norms defined by the Laplace and Hessian operator are equivalent.
So that the spaces defined by the completion of finitely supported
functions in the equivalent norms coincide. 
\end{proof}
\begin{rem*}
The assumption of nilpotent group is necessary, if $G$ is not nilpotent
then, (\ref{eq: hessian_laplace inequality}) may fail, see \cite[Section 1]{A02}. 
\end{rem*}
For a Cayley graph it is well known that if $\beta_{S}(n)\geq C(S)n^{N},\ \forall n\geq1$
for $N\geq3,$ then the first-order Sobolev inequality holds \cite[Theorem 3.6]{HM},
\begin{equation}
{\displaystyle \lVert u\rVert_{\ell^{q}}\leq C_{p,q}\lVert u\rVert_{D^{1,p}},\;\forall u\in D_{0}^{1,p}(G)}\label{eq:first-order sobo ineq-1}
\end{equation}
where $1\leq p<N,q\geq p^{\ast}\coloneqq\dfrac{Np}{N-p}.$ In fact,
this follows from a standard trick and the isoperimetric estimate
\cite[Theorem 4.18]{W1}.

We consider a concrete example of Cayley graphs of polynomial growth
with the homogeneous dimension $N$, the integer lattice graph $\mathbb{Z}^{N}$,
which serve as the discrete counterpart of $\mathbb{R}^{N}$. It consists
of the set of vertices $V=\mathbb{Z}^{N}$ and the set of edges 
\[
E=\left\{ \left\{ x,y\right\} :x,y\in\mathbb{Z}^{N},\mathop{\sum\limits _{i=1}^{N}\lvert x_{i}-y_{i}\rvert=1}\right\} .
\]
Analogous the continuous setting, we construct cutoff functions to
prove the results on $\mathbb{Z}^{N}$. 
\begin{thm}
\label{thm:the equivalence for Z^N_k=00003D00003D00003D1}If $N\geq3$,
$1<p<N$, then $D_{0}^{1,p}(\mathbb{Z}^{N})=D^{1,p}(\mathbb{Z}^{N}).$ 
\end{thm}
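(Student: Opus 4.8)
The plan is to prove the two inclusions separately, with $D_{0}^{1,p}(\mathbb{Z}^{N})\subseteq D^{1,p}(\mathbb{Z}^{N})$ being routine and the reverse inclusion carrying all the content. For the easy direction I would use the first-order Sobolev inequality (\ref{eq:first-order sobo ineq-1}), which applies to $\mathbb{Z}^{N}$ since its growth satisfies $\beta_{S}(n)\geq C(S)n^{N}$. Given a sequence $\{u_{n}\}\subset C_{0}(\mathbb{Z}^{N})$ that is Cauchy in the $D^{1,p}$ norm, the bound $\lVert u_{n}-u_{m}\rVert_{p^{\ast}}\lesssim\lVert u_{n}-u_{m}\rVert_{D^{1,p}}$ forces $\{u_{n}\}$ to be Cauchy in $\ell^{p^{\ast}}$, so $u_{n}\to u$ in $\ell^{p^{\ast}}$; since $\ell^{p^{\ast}}$-convergence on a discrete set is pointwise, $\nabla u_{n}\to\nabla u$ pointwise, and as $\{\nabla u_{n}\}$ also converges in $\ell^{p}$ its limit must be $\nabla u$. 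Hence $u\in D^{1,p}(\mathbb{Z}^{N})$ with $\lVert u\rVert_{D^{1,p}}=\lim_{n}\lVert u_{n}\rVert_{D^{1,p}}$, and the map sending the abstract completion to its $\ell^{p^{\ast}}$-limit is an isometric embedding into $D^{1,p}(\mathbb{Z}^{N})$.

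For the substantive inclusion $D^{1,p}(\mathbb{Z}^{N})\subseteq D_{0}^{1,p}(\mathbb{Z}^{N})$, I would approximate an arbitrary $u\in D^{1,p}(\mathbb{Z}^{N})$ by the finitely supported functions $u_{R}:=\eta_{R}u$, where $\eta_{R}$ equals $1$ on $B_{R}:=B_{e}^{S}(R)$, equals $0$ off $B_{2R}$, and interpolates linearly in the word distance on the annulus $A_{R}:=B_{2R}\setminus B_{R}$. Such $\eta_{R}$ satisfies $0\leq\eta_{R}\leq1$ and $\lvert\nabla_{xy}\eta_{R}\rvert\lesssim1/R$ on every edge. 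Using the discrete product rule $\nabla_{xy}(\eta_{R}u)=\eta_{R}(y)\nabla_{xy}u+u(x)\nabla_{xy}\eta_{R}$, I would write
\[
\nabla_{xy}(u-u_{R})=(1-\eta_{R}(y))\nabla_{xy}u-u(x)\nabla_{xy}\eta_{R},
\]
and, by Minkowski's inequality over the oriented edges, bound the $\ell^{p}$ norms of the two terms on the right separately.

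The first term contributes at most $\sum_{x}\sum_{y\sim x,\,y\notin B_{R}}\lvert\nabla_{xy}u\rvert^{p}$, a tail of the convergent series $\lVert u\rVert_{D^{1,p}}^{p}$, which tends to $0$ as $R\to\infty$. The second term is the crux: since $\nabla_{xy}\eta_{R}$ is supported on edges meeting $A_{R}$ and is bounded by $1/R$, its $p$-th power contribution is dominated by $R^{-p}\sum_{x\in A_{R}'}\lvert u(x)\rvert^{p}$ for a bounded enlargement $A_{R}'$ of $A_{R}$. Applying H\"older's inequality with exponents $p^{\ast}/p$ and its conjugate, together with the volume estimate $\lvert A_{R}'\rvert\lesssim R^{N}$, gives
\[
R^{-p}\sum_{x\in A_{R}'}\lvert u(x)\rvert^{p}\lesssim R^{-p}\,\lvert A_{R}'\rvert^{\,1-p/p^{\ast}}\Big(\sum_{x\in A_{R}'}\lvert u(x)\rvert^{p^{\ast}}\Big)^{p/p^{\ast}}.
\]
The exponent identity $1-p/p^{\ast}=p/N$ makes $R^{-p}\lvert A_{R}'\rvert^{\,p/N}\sim1$, so the right-hand side collapses to $\big(\sum_{x\in A_{R}'}\lvert u(x)\rvert^{p^{\ast}}\big)^{p/p^{\ast}}$, a tail of the finite norm $\lVert u\rVert_{p^{\ast}}^{p^{\ast}}$ that vanishes as $R\to\infty$. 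Combining the two estimates yields $u_{R}\to u$ in $D^{1,p}$ with $u_{R}\in C_{0}(\mathbb{Z}^{N})$, whence $u\in D_{0}^{1,p}(\mathbb{Z}^{N})$.

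The step I expect to be the main obstacle is the control of the error term $u(x)\nabla_{xy}\eta_{R}$: the decay $\lvert\nabla_{xy}\eta_{R}\rvert\lesssim1/R$ is only borderline sufficient, and it is precisely the scaling balance $1-p/p^{\ast}=p/N$ between this decay and the annulus volume $R^{N}$ that rescues the estimate, reducing it to the vanishing of an $\ell^{p^{\ast}}$-tail. The remaining points — the Lipschitz bound for $\eta_{R}$ in the word metric, the annulus volume estimate, and the reorganization of the edge sums into genuine tails (with care about the double-counting of oriented edges and the distinction between $B_{R}$ and $B_{R-1}$ at the endpoints) — are bookkeeping rather than conceptual. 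A logarithmic profile in place of the linear one is an alternative cutoff; since here $p<N$ the linear cutoff already suffices, the logarithmic choice being forced only at the critical exponent $p=N$.
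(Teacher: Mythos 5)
Your proof is correct, but it takes a genuinely different route from the paper's on the substantive inclusion. The paper's cutoff is a truncated linear function of $\log\lvert x\rvert$ (Euclidean distance) on the annulus $r\leq\lvert x\rvert\leq R$, chosen precisely so that the \emph{global} estimate $\lVert\nabla\eta\rVert_{\ell^{N}}^{N}\lesssim\left(\log\frac{R}{r}\right)^{1-N}\to0$ holds; the error term is then killed by a single global H\"older inequality $\sum\lvert u\rvert^{p}\lvert\nabla\eta\rvert^{p}\leq\lVert\nabla\eta\rVert_{\ell^{N}}^{p}\lVert u\rVert_{\ell^{p^{\ast}}}^{p}$, with all the smallness coming from the gradient of the cutoff. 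Your linear-in-the-word-metric cutoff has $\lVert\nabla\eta_{R}\rVert_{\ell^{N}}$ only bounded, not small, and you recover smallness by applying H\"older \emph{on the annulus only}, so that the error collapses to an $\ell^{p^{\ast}}$-tail of $u$; the scaling identity $1-p/p^{\ast}=p/N$ is exactly what makes $R^{-p}\lvert A_{R}'\rvert^{p/N}$ of order one, and the argument is sound for all $1<p<N$. What the paper's choice buys is the stronger conclusion that cutoffs with $\ell^{N}$-gradient norm tending to zero exist (vanishing $N$-capacity of complements of balls); this is the form of the statement that transfers to general Cayley graphs of polynomial growth via parabolicity in Theorem \ref{thm: the equivalence for G}, where no Euclidean coordinate is available, and whose smoothed third-order variant is what the second-order case requires. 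Your argument is more elementary and closer to the standard $\mathbb{R}^{N}$ proof for subcritical $p$, but it does not by itself yield that capacity statement.
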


\begin{proof}
For any $u\in D_{0}^{1,p}(\mathbb{Z}^{N})$, there exists a sequence
$\left\{ u_{n}\right\} \subset C_{0}\left(\mathbb{Z}^{N}\right)$
such that 
\[
u_{n}\stackrel{}{\longrightarrow}u\text{ in }D^{1,p}.
\]
And $u\in\ell^{\frac{Np}{N-p}}(\mathbb{Z}^{N})$ by the Sobolev inequality
(\ref{eq:first-order sobo ineq-1}). Hence, $D_{0}^{1,p}(\mathbb{Z}^{N})\subseteq D^{1,p}(\mathbb{Z}^{N})$.

In the other direction, the key is to find suitable cutoff functions
$\eta_{n}(x)\in C_{0}\left(\mathbb{Z}^{N}\right)$. For any $u\in D^{1,p}(\mathbb{Z}^{N})$,
set $u_{n}\coloneqq u\eta_{n}\in C_{0}\left(\mathbb{Z}^{N}\right)$,
then by H$\ddot{\text{o}}$lder inequalities 
\begin{align}
\lVert u_{n}-u\rVert_{D^{1,p}(\mathbb{Z}^{N})}^{p} & =\sum\limits _{x\in\mathbb{Z}^{N}}\sum\limits _{y\sim x}\lvert\nabla_{xy}(u\eta_{n})-\nabla_{xy}u\rvert^{p}\nonumber \\
 & =\sum\limits _{x\in\mathbb{Z}^{N}}\sum\limits _{y\sim x}\lvert\nabla_{xy}u\eta_{n}(y)+\nabla_{xy}\eta_{n}u(x)-\nabla_{xy}u\rvert^{p}\label{eq:k=00003D00003D00003D1-1}\\
 & \lesssim\sum\limits _{x\in\mathbb{Z}^{N}}\lvert\nabla u(x)\mid_{p}^{p}\underset{y\sim x}{\text{max}}\mid\eta_{n}(y)-1\mid^{p}+\lVert\nabla\eta_{n}\rVert_{\ell^{N}}^{p}\lVert u\rVert_{\ell^{p^{\ast}}}^{p},\nonumber 
\end{align}
Hence, if the cutoff functions satisfy 
\[
\eta_{n}\text{ is uniformly bounded,}
\]
\begin{equation}
\eta_{n}\to1\text{ pointwise on \ensuremath{\mathbb{Z}^{N}}}\label{eq: cut fun req 1-1}
\end{equation}
and 
\[
\lVert\nabla\eta_{n}\rVert_{\ell^{N}(\mathbb{Z}^{N})}\rightarrow0,
\]
then we can prove the other direction by the dominated convergence
theorem.

Let $r>1$, and $R\gg r$ be large enough. Define 
\[
\eta(x)\coloneqq1\wedge\frac{\textrm{log\ensuremath{R}}-\text{log\ensuremath{\mid x\mid}}}{\textrm{log\ensuremath{R}}-\text{log\ensuremath{r}}}\vee0,
\]
where $\mid x\mid$ stands for the Euclidean distance. Then

\begin{align*}
\lVert\nabla\eta\rVert_{\ell^{N}(\mathbb{Z}^{N})}^{N} & =\sum\limits _{r\leq\mid x\mid\leq R}\sum\limits _{y\sim x}\lvert\nabla_{xy}\eta\rvert^{N}\\
 & \lesssim\left(\textrm{log\ensuremath{\frac{R}{r}}}\right)^{-N}\sum\limits _{r-1\leq\mid x\mid\leq R+1}\sum\limits _{y\sim x}\lvert\textrm{log\ensuremath{\mid x\mid}}-\text{log\ensuremath{\ensuremath{\mid y\mid}}}\rvert^{N}\\
 & \lesssim\left(\textrm{log\ensuremath{\frac{R}{r}}}\right)^{-N}\sum\limits _{r-1\leq\mid x\mid\leq R+1}\lvert x\rvert^{-N},
\end{align*}
where the second inequality follows from the mean value theorem. In
the following, we can estimate the summation on $\mathbb{Z}^{N}$
by the integral on $\mathbb{R}^{\mathit{N}}$, 
\begin{align*}
\sum\limits _{r-1\leq\mid x\mid\leq R+1}\lvert x\rvert^{-N} & \lesssim\sum\limits _{r-1\leq\mid x\mid\leq R+1}\int\limits _{S_{x}(\frac{1}{2})}\lvert x\rvert^{-N}\text{d\ensuremath{t}}\\
 & \lesssim\sum\limits _{r-1\leq\mid x\mid\leq R+1}\int\limits _{S_{x}(\frac{1}{2})}\lvert t\rvert^{-N}\text{d\ensuremath{t}}\\
 & \lesssim\int\limits _{\widetilde{B}(R+2)\setminus\widetilde{B}(r-2)}\lvert t\rvert^{-N}\text{d\ensuremath{t}},
\end{align*}
where $S_{x}(\frac{1}{2})\coloneqq\left\{ t\in\mathbb{R}^{\mathit{N}}:\lvert t_{i}-x_{i}\rvert<\frac{1}{2},1\leq i\leq N\right\} $
is the Euclidean cube, and $\widetilde{B}(r)$ is the Euclidean ball
in $\mathbb{R}^{\mathit{N}}$ with radius of $r$ and centered at
the origin. Hence, 
\[
\lVert\nabla\eta\rVert_{\ell^{N}(\mathbb{Z}^{N})}^{N}\lesssim\left(\textrm{log\ensuremath{\frac{R}{r}}}\right)^{-N}\text{log}\ensuremath{\frac{R}{r}}=O\left(\textrm{\ensuremath{\left(\text{log}\ensuremath{\frac{R}{r}}\right)}}^{1-N}\right).
\]
For fixed $r$, letting $R\to\infty$ when $N\geq3$, we have that
$\lVert\nabla\eta\rVert_{N}\to0$. Since $\eta(x)$ is uniformly bounded
and tends to $1$ pointwise, we prove $u\in D_{0}^{1,p}(\mathbb{Z}^{N})$
by (\ref{eq:k=00003D00003D00003D1-1}). 
\end{proof}
\begin{rem*}
For $D^{1,p}(\mathbb{Z}^{N})$, one can replace the Euclidean distance
$\mid\cdot\mid$ in the definition of cutoff function $\eta$ by any
norm $\parallel\cdot\parallel_{r}$ with $r>0$, and Cosco, Nakajima
and Schweiger give a more explicit estimate of the $p$-capacity \cite{CSF21}. 
\end{rem*}
For a Cayley graph $(G,S)$ of polynomial growth with the homogeneous
dimension $N$, it is difficult to construct desired cutoff functions.
We find an alternative method to prove the result by the parabolicity
theory \cite{SL}. 
\begin{thm}
\label{thm: the equivalence for G}If $N\geq3$, $1<p<N$, then $D_{0}^{1,p}(G)=D^{1,p}(G)$. 
\end{thm}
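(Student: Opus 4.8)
The plan is to follow the template of the $\mathbb{Z}^{N}$ argument in Theorem~\ref{thm:the equivalence for Z^N_k=00003D00003D00003D1}, replacing the explicit logarithmic cutoffs (which rely on the Euclidean distance and are unavailable on an abstract Cayley graph) by cutoffs extracted from parabolicity theory. The inclusion $D_{0}^{1,p}(G)\subseteq D^{1,p}(G)$ is the soft direction: given $u\in D_{0}^{1,p}(G)$ and $u_{n}\in C_{0}(G)$ with $u_{n}\to u$ in the $D^{1,p}$ seminorm, the first-order Sobolev inequality (\ref{eq:first-order sobo ineq-1}) shows $\lVert u_{n}-u_{m}\rVert_{p^{\ast}}\lesssim\lVert u_{n}-u_{m}\rVert_{D^{1,p}}$, so $\{u_{n}\}$ is Cauchy in $\ell^{p^{\ast}}(G)$ and its limit must be $u$; hence $u\in\ell^{p^{\ast}}(G)$ with $\lVert u\rVert_{D^{1,p}}<\infty$, i.e. $u\in D^{1,p}(G)$.

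For the reverse inclusion I would fix $u\in D^{1,p}(G)$ and a sequence of cutoffs $\eta_{n}\in C_{0}(G)$ with $0\le\eta_{n}\le1$, and set $u_{n}:=u\eta_{n}\in C_{0}(G)$. The discrete product rule $\nabla_{xy}(u\eta_{n})=\eta_{n}(y)\nabla_{xy}u+u(x)\nabla_{xy}\eta_{n}$ together with $|a+b|^{p}\lesssim|a|^{p}+|b|^{p}$ splits the error as in (\ref{eq:k=00003D00003D00003D1-1}):
\[
\lVert u_{n}-u\rVert_{D^{1,p}}^{p}\lesssim\sum_{x}|\nabla u(x)|_{p}^{p}\max_{y\sim x}|\eta_{n}(y)-1|^{p}+\sum_{x}|u(x)|^{p}|\nabla\eta_{n}(x)|_{p}^{p}.
\]
The first sum tends to $0$ by dominated convergence once $\eta_{n}\to1$ pointwise, since the summand is dominated by $2^{p}|\nabla u(x)|_{p}^{p}$, which is summable because $u\in D^{1,p}(G)$. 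The second sum I would handle by H\"older with conjugate exponents $\tfrac{N}{N-p}$ and $\tfrac{N}{p}$, using the defining relation $\tfrac1N+\tfrac1{p^{\ast}}=\tfrac1p$, which yields a bound $\lesssim\lVert u\rVert_{p^{\ast}}^{p}\,\lVert\nabla\eta_{n}\rVert_{\ell^{N}}^{p}$ (with $\lVert u\rVert_{p^{\ast}}<\infty$ from the definition of $D^{1,p}(G)$); this tends to $0$ as soon as $\lVert\nabla\eta_{n}\rVert_{\ell^{N}}\to0$. Thus everything reduces to producing cutoffs $\eta_{n}$ with $0\le\eta_{n}\le1$, $\eta_{n}\to1$ pointwise, and $\lVert\nabla\eta_{n}\rVert_{\ell^{N}}\to0$.

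The construction of these cutoffs is the heart of the matter, and it is exactly here that parabolicity enters. The point I would stress is that although $G$ is $p$-hyperbolic in our range $p<N$ (which is what makes the Sobolev inequality and the soft direction work), it is $N$-\emph{parabolic}: the polynomial upper volume bound $\beta_{S}(n)\le C_{2}(S)n^{N}$ forces the divergence of the volume series $\sum_{n}\bigl(n/\beta_{S}(n)\bigr)^{1/(N-1)}=\infty$ (its summand is bounded below by a constant times $n^{-1}$), which is the standard volume criterion for $N$-parabolicity and is precisely the content of \cite[Corollary 2.6]{SL}. I would use it in the form that every finite set has zero $N$-capacity at infinity: taking an exhaustion $B_{e}^{S}(R_{1})\subset B_{e}^{S}(R_{2})\subset\cdots$ of $G$ and, for each $n$, a finitely supported near-minimizer $\eta_{n}$ of the $N$-capacity of $B_{e}^{S}(R_{n})$, truncated to $[0,1]$ (which does not increase its $\ell^{N}$-gradient) so that $\eta_{n}\equiv1$ on $B_{e}^{S}(R_{n})$ and $\lVert\nabla\eta_{n}\rVert_{\ell^{N}}^{N}<1/n$, produces a sequence with $\eta_{n}\to1$ pointwise and $\lVert\nabla\eta_{n}\rVert_{\ell^{N}}\to0$, as required.

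The main obstacle is this final step: unlike on $\mathbb{Z}^{N}$, there is no canonical distance-based formula for the cutoffs, so their existence must be obtained abstractly from the parabolicity theory rather than by hand. The critical structural observation making the whole scheme close is the matching of exponents — it is precisely $N$-parabolicity (gradients measured in $\ell^{N}$), and not $p$-parabolicity, that the H\"older pairing $\tfrac1N+\tfrac1{p^{\ast}}=\tfrac1p$ demands, and polynomial growth of degree $N$ sits exactly at the critical (parabolic) borderline for the exponent $N$. Once the cutoffs are in hand, both displayed error terms vanish, giving $u_{n}\to u$ in $D^{1,p}(G)$ and hence $D^{1,p}(G)\subseteq D_{0}^{1,p}(G)$, which completes the proof.
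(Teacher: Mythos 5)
Your proposal is correct and follows essentially the same route as the paper: the soft inclusion via the first-order Sobolev inequality, and the reverse inclusion via cutoffs $\eta_{n}$ with $\lVert\nabla\eta_{n}\rVert_{\ell^{N}}\to0$ obtained from the $N$-parabolicity of $G$ (i.e.\ vanishing $N$-capacity, as in \cite[Corollary 2.6]{SL}), combined with the product rule and the H\"older pairing $\tfrac1N+\tfrac1{p^{\ast}}=\tfrac1p$. The only difference is that you spell out the volume criterion for $N$-parabolicity and the extraction of near-minimizing capacity functions, which the paper leaves to the cited reference.
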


\begin{proof}
By the same argument as Theorem \ref{thm:the equivalence for Z^N_k=00003D00003D00003D1},
$D_{0}^{1,p}(G)\subseteq D^{1,p}(G)$ follows from the Sobolev inequalities
(\ref{eq:first-order sobo ineq-1}).

In the other direction, by the parabolicity theory on graphs \cite[Corollary 2.6]{SL}
we know that $G$ is $N$-parabolic which implies the $N$-capacity
is zero. Hence, we can get a uniformly bounded sequence $\eta_{n}(x)$
with finite support satisfying $\eta_{n}(x)\to1$ pointwise and $\lVert\nabla\eta_{n}\rVert_{\ell^{N}(G)}\rightarrow0$.
Set $u_{n}\coloneqq u\eta_{n}\in C_{0}(G)$, then by H$\ddot{\text{o}}$lder
inequalities 
\begin{align*}
\lVert u_{n}-u\rVert_{D^{1,p}(G)}^{p} & =\sum\limits _{x\in G}\sum\limits _{y\sim x}\lvert\nabla_{xy}u\eta_{n}(y)+\nabla_{xy}\eta_{n}u(x)-\nabla_{xy}u\rvert^{p}\\
 & \lesssim\sum\limits _{x\in G}\lvert\nabla u(x)\mid_{p}^{p}\underset{y\sim x}{\text{max}}\mid\eta_{n}(y)-1\mid^{p}+\lVert\nabla\eta_{n}\rVert_{\ell^{N}}^{p}\lVert u\rVert_{\ell^{p^{\ast}}}^{p}\to0.
\end{align*}
That is, $D^{1,p}(G)\subseteq D_{0}^{1,p}(G)$. 
\end{proof}
\begin{rem*}
For $N\geq3,$ if the Cayley graph $(G,S)$ satisfies $\beta_{S}(n)\geq C(S)n^{N},\ \forall n\geq1$,
then we can prove $D_{0}^{1,2}(G)=D^{1,2}(G)$ using the Hodge decomposition
theorem on $1$-forms on edges, see Theorem \ref{thm:Hodge decomposition}
and Corollary \ref{cor:D012=00003D00003D00003D00003DD12}. 
\end{rem*}
By the properties of semigroup \cite[Chapter 5]{B}, we have the following
lemma. 
\begin{lem}
\label{lem:L^(1/2) is bounded}For any $u\in\ell^{p}(G)$, $p\in(1,\infty)$,
\[
Mu\coloneqq\sideset{\frac{1}{\mid\varGamma(-\frac{1}{2})\mid}}{_{0}^{\infty}}\int\left(e^{t\Delta}u-u\right)t^{-\frac{3}{2}}\text{d}t,
\]
where $e^{t\Delta}$ is the semigroup of $\Delta$. Then $M$ is a
bounded linear operator in $\ell^{p}(G)$, and 
\[
L^{\frac{1}{2}}u\coloneqq(-\Delta)^{\frac{1}{2}}u=Mu,\forall u\in\ell^{p}(G).
\]
\end{lem}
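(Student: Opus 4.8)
The plan is to exploit the fact that, on a regular graph of bounded degree, the Laplacian $\Delta$ is a \emph{bounded} operator on every $\ell^{p}(G)$, with $\lVert\Delta\rVert_{p\to p}\le 2d$ where $d$ is the (constant) vertex degree. Writing $\Delta=A-dI$ with $A$ the adjacency operator, the heat semigroup factors as $e^{t\Delta}=e^{-td}e^{tA}$, so $e^{t\Delta}$ has a nonnegative kernel and satisfies $e^{t\Delta}\mathbf{1}=\mathbf{1}$; hence it is a Markov operator and $\lVert e^{t\Delta}\rVert_{\infty\to\infty}=1$. Since $\Delta$ is symmetric on $\ell^{2}(G)$, duality gives $\lVert e^{t\Delta}\rVert_{1\to1}=1$, and Riesz--Thorin interpolation yields the contraction bound $\lVert e^{t\Delta}\rVert_{p\to p}\le 1$ for all $p\in(1,\infty)$. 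These two facts---boundedness of $\Delta$ and contractivity of the semigroup uniformly in $p$---are what the later estimates rest on, and they are exactly the semigroup properties collected in \cite[Chapter 5]{B}.

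To show $M$ is well defined and bounded I would split the defining integral at $t=1$ and bound the two pieces separately as Bochner integrals in $\ell^{p}(G)$. For small $t$, the identity $e^{t\Delta}-I=\int_{0}^{t}\Delta e^{s\Delta}\,\mathrm{d}s$ together with the contraction bound gives $\lVert(e^{t\Delta}-I)u\rVert_{p}\le t\,\lVert\Delta\rVert_{p\to p}\lVert u\rVert_{p}$, so that $\int_{0}^{1}\lVert e^{t\Delta}u-u\rVert_{p}\,t^{-3/2}\,\mathrm{d}t\lesssim\lVert u\rVert_{p}\int_{0}^{1}t^{-1/2}\,\mathrm{d}t<\infty$. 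For large $t$, the crude triangle inequality $\lVert e^{t\Delta}u-u\rVert_{p}\le 2\lVert u\rVert_{p}$ gives $\int_{1}^{\infty}\lVert e^{t\Delta}u-u\rVert_{p}\,t^{-3/2}\,\mathrm{d}t\lesssim\lVert u\rVert_{p}\int_{1}^{\infty}t^{-3/2}\,\mathrm{d}t<\infty$. Thus the integrand is absolutely Bochner-integrable and $\lVert Mu\rVert_{p}\lesssim\lVert u\rVert_{p}$; linearity is immediate, so $M$ is a bounded linear operator on $\ell^{p}(G)$.

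The remaining and more delicate point is the identification $M=(-\Delta)^{1/2}$. Since $-\Delta$ is a bounded nonnegative operator, its square root is unambiguously defined through the functional calculus (holomorphic, or spectral on $\ell^{2}$). The identification reduces to the scalar subordination identity: a change of variables $s=t\mu$ turns $\int_{0}^{\infty}(1-e^{-t\mu})t^{-3/2}\,\mathrm{d}t$ into $\mu^{1/2}\int_{0}^{\infty}(1-e^{-s})s^{-3/2}\,\mathrm{d}s$, and an integration by parts evaluates the remaining integral as $2\Gamma(\tfrac{1}{2})=|\Gamma(-\tfrac{1}{2})|$, so that $\mu^{1/2}=\frac{1}{|\Gamma(-\frac{1}{2})|}\int_{0}^{\infty}(1-e^{-t\mu})t^{-3/2}\,\mathrm{d}t$ for every $\mu\ge0$. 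Transferring this scalar identity to the operator $-\Delta$ requires interchanging the Bochner integral with the spectral (functional) calculus, which is justified by Fubini's theorem using the absolute convergence established above. I would first carry this out on $\ell^{2}(G)$, where the spectral theorem applies directly, and then extend to general $p$ by noting that both $M$ and $(-\Delta)^{1/2}$ are bounded on $\ell^{p}(G)$ and agree on the dense subspace $C_{0}(G)\subset\ell^{2}(G)\cap\ell^{p}(G)$, on which the two operators produce the same pointwise function. The main obstacle is precisely this interchange-and-transfer step: making the commutation of the integral with the functional calculus rigorous and checking consistency of the two definitions of $(-\Delta)^{1/2}$ across different $\ell^{p}$ spaces, whereas the convergence estimates themselves are routine once boundedness of $\Delta$ and contractivity of $e^{t\Delta}$ are in hand.
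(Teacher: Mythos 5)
Your proposal is correct and follows essentially the same route as the paper: split the integral at $t=1$, control the small-$t$ piece via $e^{t\Delta}-I=\int_0^t e^{s\Delta}\Delta\,\mathrm{d}s$ together with contractivity of the semigroup and boundedness of $\Delta$ on $\ell^p$, control the large-$t$ piece by the triangle inequality, identify $M$ with $(-\Delta)^{1/2}$ on $\ell^2$ by the spectral calculus (the scalar subordination identity you write out is what the paper delegates to the functional-calculus references), and then extend to $\ell^p$ by density of $C_0(G)$ and boundedness of both operators. The only caveat is a sign: your (correct) scalar identity uses $1-e^{-t\mu}$ against $|\Gamma(-\tfrac12)|$, whereas the displayed definition of $M$ uses $e^{t\Delta}u-u$, a discrepancy already present in the paper's own formula rather than a defect of your argument.
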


\begin{proof}
Obviously, $M$ is well-defined and linear. And for any $u\in C_{0}(G),$

\[
Mu\sim\int_{0}^{1}\left(\int_{0}^{t}e^{s\Delta}\Delta u\text{ d\ensuremath{s}}\right)t^{-\frac{3}{2}}\text{d}t+\int_{1}^{\infty}\left(e^{t\Delta}u-u\right)t^{-\frac{3}{2}}\text{d}t.
\]
Then, 
\begin{align*}
\lVert Mu\lVert_{p} & \lesssim\int_{0}^{1}\left(\int_{0}^{t}\lVert e^{s\Delta}\lVert_{p\rightarrow p}\lVert\Delta u\lVert_{p}\text{ d\ensuremath{s}}\right)t^{-\frac{3}{2}}\text{d}t+\int_{1}^{\infty}\left(\lVert e^{t\Delta}\lVert_{p\rightarrow p}\lVert u\lVert_{p}+\lVert u\lVert_{p}\right)t^{-\frac{3}{2}}\text{d}t\\
 & \lesssim\lVert u\lVert_{p}\int_{0}^{1}t^{-\frac{1}{2}}\text{d}t+\lVert u\lVert_{p}\int_{1}^{\infty}t^{-\frac{3}{2}}\text{d}t\lesssim\lVert u\lVert_{p}.
\end{align*}
Hence, $M$ is a bounded operator in $\ell^{p}(G)$. And these definitions
are consistent for different $p$, i.e. two of them agree on their
common domain since the extensions of semigroup $e^{t\Delta}$ are
consistent in different $\ell^{p}(G)$, see \cite{KL}.

By the spectral mapping theorem in Banach spaces \cite[Proposition 3.1.1]{M},
we know that $L^{\frac{1}{2}}$ is a bounded operator in $\ell^{p}(G)$.
For $p=2$, using the functional calculus in Hilbert spaces, for any
$u\in\ell^{2}(G),$ we have 
\[
L^{\frac{1}{2}}u=Mu,
\]
which is also true for any $u\in C_{0}(G)$. Hence by the Bounded
Linear Transformation theorem \cite[Theorem I.7]{RS}, we get 
\[
L^{\frac{1}{2}}u=Mu,\:\forall u\in\ell^{p}(G).
\]
\end{proof}
Now we are ready to prove the discrete second-order Sobolev inequality. 
\begin{thm}
\label{thm: second order sobo ineq}For $N\geq3,1<p<\dfrac{N}{2},p^{\ast\ast}\coloneqq\dfrac{Np}{N-2p}$,
we have the second-order Sobolev inequalities 
\begin{equation}
\lVert u\rVert_{\ell^{p^{\ast\ast}}}\leq C_{p}\lVert u\rVert_{D^{2,p}},\;\forall u\in D_{0}^{2,p}(G),\label{eq: second-order sobo2}
\end{equation}
and for $1\leq p<\dfrac{N}{2}$, 
\begin{equation}
\lVert u\rVert_{\ell^{p^{\ast\ast}}}\leq C_{p}\lVert u\rVert_{\widetilde{D}^{2,p}},\;\forall u\in\widetilde{D}_{0}^{2,p}(G).\label{eq: second-order Sobolev inequ_hessian}
\end{equation}
\end{thm}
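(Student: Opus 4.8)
The plan is to obtain both inequalities by iterating the first-order Sobolev inequality (\ref{eq:first-order sobo ineq-1}) twice, the two applications being tied together through the square root of the Laplacian. The arithmetic that makes this work is that the second-order critical exponent is the first-order critical exponent of the first-order critical exponent: a direct computation gives $(p^{\ast})^{\ast}=\frac{Np^{\ast}}{N-p^{\ast}}=\frac{Np}{N-2p}=p^{\ast\ast}$, and the condition $p^{\ast}<N$ needed to feed $p^{\ast}$ back into (\ref{eq:first-order sobo ineq-1}) is exactly $p<\frac{N}{2}$. So the real content is a single ``half-order'' Sobolev estimate that is then applied on two levels.

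First I would record that intermediate estimate: for $1<r<N$ and $f\in C_{0}(G)$,
\begin{equation}
\lVert f\rVert_{\ell^{r^{\ast}}}\lesssim\lVert(-\Delta)^{1/2}f\rVert_{\ell^{r}},\tag{$\star$}
\end{equation}
where $r^{\ast}=\frac{Nr}{N-r}$ and $(-\Delta)^{1/2}$ is the bounded operator $M$ of Lemma \ref{lem:L^(1/2) is bounded}. This follows by combining (\ref{eq:first-order sobo ineq-1}), in the form $\lVert f\rVert_{r^{\ast}}\lesssim\lVert\nabla f\rVert_{r}$, with the boundedness of the Riesz transform $\nabla(-\Delta)^{-1/2}$ on $\ell^{r}(G)$ from \cite{N}: writing $\nabla f=\nabla(-\Delta)^{-1/2}(-\Delta)^{1/2}f$ gives $\lVert\nabla f\rVert_{r}\lesssim\lVert(-\Delta)^{1/2}f\rVert_{r}$.

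To prove (\ref{eq: second-order sobo2}) for $1<p<\frac{N}{2}$ and $u\in C_{0}(G)$, I would apply $(\star)$ twice. Applying it with $r=p^{\ast}$, which is admissible since $1<p^{\ast}<N$, yields $\lVert u\rVert_{p^{\ast\ast}}\lesssim\lVert(-\Delta)^{1/2}u\rVert_{p^{\ast}}$; applying it with $r=p$ to $g:=(-\Delta)^{1/2}u$ yields $\lVert g\rVert_{p^{\ast}}\lesssim\lVert(-\Delta)^{1/2}g\rVert_{p}$. Since $(-\Delta)^{1/2}\circ(-\Delta)^{1/2}=-\Delta$ on $C_{0}(G)$ by the functional calculus behind Lemma \ref{lem:L^(1/2) is bounded}, the right-hand side is $\lVert\Delta u\rVert_{p}=\lVert u\rVert_{D^{2,p}}$, and chaining the two estimates gives $\lVert u\rVert_{p^{\ast\ast}}\lesssim\lVert u\rVert_{D^{2,p}}$ for $u\in C_{0}(G)$. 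This passes to all of $D_{0}^{2,p}(G)$ by density, since a $D^{2,p}$-Cauchy sequence is $\ell^{p^{\ast\ast}}$-Cauchy. The Hessian version (\ref{eq: second-order Sobolev inequ_hessian}) for $1<p<\frac{N}{2}$ is then immediate from the norm equivalence $\lVert u\rVert_{D^{2,p}}\sim\lVert u\rVert_{\widetilde{D}^{2,p}}$ of Lemma \ref{lem: norm equivalence}.

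The remaining endpoint $p=1$ of (\ref{eq: second-order Sobolev inequ_hessian}) is the one place where the machinery fails, and I expect it to be the main obstacle: the Riesz transform is unbounded on $\ell^{1}$ and the functional calculus is confined to $p\in(1,\infty)$, so $(\star)$ is unavailable and the norm equivalence lemma (which needs $p>1$) cannot be used. Here I would instead iterate (\ref{eq:first-order sobo ineq-1}) directly on the difference operators, which is legitimate because (\ref{eq:first-order sobo ineq-1}) holds at $p=1$. With $1^{\ast}=N/(N-1)$ and $(1^{\ast})^{\ast}=N/(N-2)=1^{\ast\ast}$, one first applies (\ref{eq:first-order sobo ineq-1}) with exponent $1^{\ast}$ to get $\lVert u\rVert_{1^{\ast\ast}}\lesssim\lVert u\rVert_{D^{1,N/(N-1)}}\sim(\sum_{i}\lVert\partial_{i}u\rVert_{1^{\ast}}^{1^{\ast}})^{1/1^{\ast}}$, using the regularity of $G$ to pass between $\lVert\nabla\cdot\rVert$ and the directional differences $\partial_{i}$. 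One then applies (\ref{eq:first-order sobo ineq-1}) with exponent $1$ to each $\partial_{i}u\in C_{0}(G)$, giving $\lVert\partial_{i}u\rVert_{1^{\ast}}\lesssim\lVert\partial_{i}u\rVert_{D^{1,1}}\sim\sum_{j}\lVert\partial_{j}\partial_{i}u\rVert_{1}$. Because the index sets are finite, the inclusion $\ell^{1}\hookrightarrow\ell^{1^{\ast}}$ collapses these into $\lVert u\rVert_{1^{\ast\ast}}\lesssim\sum_{i,j}\lVert\partial_{j}\partial_{i}u\rVert_{1}=\lVert\nabla^{2}u\rVert_{1}=\lVert u\rVert_{\widetilde{D}^{2,1}}$, and density extends this to $\widetilde{D}_{0}^{2,1}(G)$. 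Apart from this endpoint, the only care required is bookkeeping: verifying that $(-\Delta)^{1/2}$ and the Riesz transform are consistently defined and bounded on both $\ell^{p}$ and $\ell^{p^{\ast}}$ (guaranteed by the consistency statement in Lemma \ref{lem:L^(1/2) is bounded}) and that $(\star)$ is invoked only with admissible parameters.
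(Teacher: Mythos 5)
Your proof is correct and follows essentially the same route as the paper: both iterate the first-order Sobolev inequality twice, linked by the $\ell^{r}$-boundedness of the Riesz transform and the identity $(-\Delta)^{1/2}\circ(-\Delta)^{1/2}=-\Delta$, and both handle the Hessian version by applying the first-order inequality directly to the directional differences $\partial_{i}u$ (the paper does this for all $1\leq p<\frac{N}{2}$, you only at the endpoint $p=1$). The one point to make explicit is that your estimate $(\star)$ is applied to $g=(-\Delta)^{1/2}u$, which is not finitely supported; as in the paper, this is justified because $g\in\ell^{p}(G)\hookrightarrow D^{1,p}(G)=D_{0}^{1,p}(G)$.
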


\begin{proof}
Using the completion trick, it suffices to prove that the second-order
Sobolev inequalities (\ref{eq: second-order sobo2}) and (\ref{eq: second-order Sobolev inequ_hessian})
hold for any $u\in C_{0}(G)$. By the first-order Sobolev inequality
(\ref{eq:first-order sobo ineq-1}) we have 
\begin{equation}
\lVert u\rVert_{\ell^{p^{\ast\ast}}}\leq C_{p}\lVert\nabla u\lVert_{p^{\ast}}.\label{eq:second ineq 1}
\end{equation}
By the boundedness of Riesz transforms \cite{N}, we know that 
\begin{equation}
\lVert\nabla u\lVert_{p^{\ast}}\lesssim\lVert L^{\frac{1}{2}}u\lVert_{p^{\ast}}.\label{eq:second ineq 2}
\end{equation}
And $L^{\frac{1}{2}}u\in\ell^{p}(G)$ since $L^{\frac{1}{2}}$ is
a bounded linear operator in $\ell^{p}(G)$ by Lemma \ref{lem:L^(1/2) is bounded}.
Then $L^{\frac{1}{2}}u\in D_{0}^{1,p}(G)$ follows from $\ell^{p}(G)$
embeds into $D^{1,p}(G)$ and $D_{0}^{1,p}(G)=D^{1,p}(G)$. Hence
by the first-order Sobolev inequality again we know 
\begin{equation}
\lVert L^{\frac{1}{2}}u\lVert_{p^{\ast}}\leq C_{p}\lVert\nabla L^{\frac{1}{2}}u\lVert_{p}.\label{eq:second ineq 3}
\end{equation}
By the boundedness of Riesz transforms, we get 
\begin{equation}
\lVert\nabla L^{\frac{1}{2}}u\lVert_{p}\lesssim\lVert L^{\frac{1}{2}}L^{\frac{1}{2}}u\lVert_{p}=\lVert u\rVert_{D^{2,p}}.\label{eq:second ineq 4}
\end{equation}
The inequality (\ref{eq: second-order sobo2}) is proved by (\ref{eq:second ineq 1})-(\ref{eq:second ineq 4}).

Next by the first-order Sobolev inequality (\ref{eq:first-order sobo ineq-1}),
\begin{align*}
\lVert u\rVert_{\ell^{p^{\ast\ast}}} & \lesssim\lVert\nabla u\rVert_{\ell^{p^{\ast}}}=\left(\stackrel[i=1]{m}{\sum}\lVert\partial_{i}u\rVert_{\ell^{p^{\ast}}}^{p^{\ast}}\right)^{1/p^{\ast}}\\
 & \lesssim\left(\stackrel[i=1]{m}{\sum}\lVert\nabla\partial_{i}u\rVert_{\ell^{p}}^{p^{\ast}}\right)^{1/p^{\ast}}\\
 & \lesssim\left(\left(\stackrel[i,j=1]{m}{\sum}\lVert\partial_{j}\partial_{i}u\rVert_{\ell^{p}}\right)^{p^{\ast}}\right)^{1/p^{\ast}}\sim\lVert\nabla^{2}u\rVert_{\ell^{p}}.
\end{align*}
Hence the Sobolev inequality (\ref{eq: second-order Sobolev inequ_hessian})
holds. 
\end{proof}
\begin{rem*}
(1) Since $\ell^{p}(G)$ embeds into $\ell^{q}(G)$ for any $q>p$,
see \cite[Lemma 2.1]{HLY}, we get the second-order Sobolev inequalities
(\ref{eq:second-order Sobo ineq}) and (\ref{eq: second-order Sobolev inequ_hessian})
in supercritical cases $q>p^{\ast\ast}$.

(2) The second-order Sobolev inequality for $u\in D_{0}^{2,1}(G)$
can not be proved directly since the Riesz transforms is weak type
$(1,1)$ for the nilpotent group $G$ \cite[Theorem 1]{N}, and we
don't know whether the inequality 
\[
\lVert u\rVert_{\ell^{1^{\ast}}}\lesssim\lVert\nabla u\rVert_{\ell^{1,\infty}}\coloneqq\underset{t>0}{\text{sup }}t\left(\sharp\left\{ x\in G:\mid\nabla u(x)\mid_{1}>t\right\} \right)
\]
holds or not. 
\end{rem*}
Then we prove the equivalence of higher-order Sobolev spaces. 
\begin{thm}
\label{thm: the equivalence for Z^N_k=00003D00003D00003D2}If $N\geq3$,
$1<p<\dfrac{N}{2}$, then 
\begin{equation}
D_{0}^{2,p}(\mathbb{Z}^{N})=D^{2,p}(\mathbb{Z}^{N}),\text{}\label{eq: D0kp(Z)=00003D00003D00003D00003D00003DDkp(Z)-1}
\end{equation}
and for $1\leq p<\dfrac{N}{2}$, 
\[
\widetilde{D}_{0}^{2,p}(\mathbb{Z}^{N})=\widetilde{D}^{2,p}(\mathbb{Z}^{N}).
\]
\end{thm}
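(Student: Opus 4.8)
The plan is to establish the two inclusions separately and to deduce the tilde statement at the end. The inclusion $D_{0}^{2,p}(\mathbb{Z}^{N})\subseteq D^{2,p}(\mathbb{Z}^{N})$ is the soft direction: if $u$ lies in the completion, pick $u_{n}\in C_{0}(\mathbb{Z}^{N})$ with $\lVert u_{n}-u\rVert_{D^{2,p}}\to0$; the second-order Sobolev inequality (\ref{eq: second-order sobo2}) shows $\{u_{n}\}$ is also Cauchy in $\ell^{p^{\ast\ast}}$, so it converges there to a function with Laplacian in $\ell^{p}$, which realises $u$ as an element of $D^{2,p}(\mathbb{Z}^{N})$. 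For the substantive inclusion $D^{2,p}(\mathbb{Z}^{N})\subseteq D_{0}^{2,p}(\mathbb{Z}^{N})$ I would fix $u\in D^{2,p}(\mathbb{Z}^{N})$ and approximate it by $u_{n}:=u\eta_{n}$ with cutoffs $\eta_{n}\in C_{0}(\mathbb{Z}^{N})$. Using the discrete Leibniz rule one writes, with the polarised carr\'e du champ $\Gamma(f,g)(x)=\tfrac{1}{2}\sum_{y\sim x}\nabla_{xy}f\,\nabla_{xy}g$,
\[
\Delta(u\eta_{n})-\Delta u=(\eta_{n}-1)\Delta u+u\,\Delta\eta_{n}+2\Gamma(u,\eta_{n}),
\]
and estimates the three pieces in $\ell^{p}$. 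The first tends to $0$ by dominated convergence once $\eta_{n}$ is uniformly bounded and $\eta_{n}\to1$ pointwise. For the second, H\"older's inequality with $\tfrac{1}{p}=\tfrac{1}{p^{\ast\ast}}+\tfrac{2}{N}$ gives $\lVert u\,\Delta\eta_{n}\rVert_{p}\leq\lVert u\rVert_{p^{\ast\ast}}\lVert\Delta\eta_{n}\rVert_{N/2}$. For the third, a vertexwise H\"older estimate together with $\nabla u\in\ell^{p^{\ast}}$ yields $\lVert\Gamma(u,\eta_{n})\rVert_{p}\lesssim\lVert\nabla u\rVert_{p^{\ast}}\lVert\nabla\eta_{n}\rVert_{N}$; the membership $\nabla u\in\ell^{p^{\ast}}$ holds for every $u\in D^{2,p}(\mathbb{Z}^{N})$ by the boundedness of the Riesz transform \cite{N} and the first-order Sobolev inequality (\ref{eq:first-order sobo ineq-1}) (a discrete Gagliardo--Nirenberg interpolation between $\lVert\Delta u\rVert_{p}$ and $\lVert u\rVert_{p^{\ast\ast}}$). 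Thus everything reduces to producing cutoffs that are uniformly bounded, tend to $1$ pointwise, and satisfy
\[
\lVert\nabla\eta_{n}\rVert_{\ell^{N}}\to0,\qquad\lVert\Delta\eta_{n}\rVert_{\ell^{N/2}}\to0.
\]
The first three requirements are exactly those met in Theorem \ref{thm:the equivalence for Z^N_k=00003D00003D00003D1}; the new and decisive one is the vanishing of $\lVert\Delta\eta_{n}\rVert_{N/2}$.

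To build such cutoffs I would set $\eta_{n}(x)=\phi_{n}(|x|)$ as a function of the \emph{Euclidean} distance $|x|$ and apply the discrete chain rule (Lemma \ref{lem:chain rule}),
\[
\Delta\eta_{n}(x)=\phi_{n}'(|x|)\,\Delta|x|+\phi_{n}''(\xi)\,\Gamma|x|.
\]
A Taylor expansion gives $\Delta|x|=\tfrac{N-1}{|x|}+O(|x|^{-2})$ and $\Gamma|x|=O(1)$; the decay $\Delta|x|\sim|x|^{-1}$ is exactly why the Euclidean distance must be used, since the word distance on $\mathbb{Z}^{N}$ has a Laplacian concentrated on the coordinate hyperplanes with no such decay. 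Choosing $\phi_{n}$ to be the logarithmic profile $\phi(s)=1\wedge\tfrac{\log R-\log s}{\log(R/r)}\vee0$ already forces $\lVert\nabla\eta_{n}\rVert_{N}\to0$ as $R\to\infty$, and the \emph{interior} contribution to $\lVert\Delta\eta_{n}\rVert_{N/2}^{N/2}$ is $\lesssim(\log(R/r))^{1-N/2}\to0$ for $N\geq3$.

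The main obstacle is precisely the sharp corner of this profile at $|x|=R$. There $\phi$ is merely Lipschitz, the second difference of $\eta_{n}$ is of order $(R\log(R/r))^{-1}$ on the shell $|x|\approx R$, which carries $\sim R^{N-1}$ vertices, so the resulting contribution to $\lVert\Delta\eta_{n}\rVert_{N/2}^{N/2}$, of size $\sim R^{(N-2)/2}(\log(R/r))^{-N/2}$, \emph{diverges} as $R\to\infty$ for $N\geq3$; this is why the naive linear cutoff fails in second order. Following the strategy announced in the introduction, I would repair this by replacing the corner with a ``close to linear'' cubic that rounds the transition on a region of width comparable to $R$, so that $\phi_{n}$ becomes $C^{2}$ (or $C^{1}$ with bounded second derivative, which suffices for the chain-rule estimate), $\phi_{n}''$ is of size $\sim(R^{2}\log(R/r))^{-1}$ there, and the corner contribution is brought down to $\sim(\log(R/r))^{-N/2}\to0$. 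Checking that this smoothing preserves the uniform bound, the pointwise convergence, and the gradient estimate $\lVert\nabla\eta_{n}\rVert_{N}\to0$ while taming $\lVert\Delta\eta_{n}\rVert_{N/2}$ is the technical heart of the argument.

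Finally, for the tilde spaces, when $p>1$ the norm equivalence $\lVert\cdot\rVert_{\widetilde{D}^{2,p}}\sim\lVert\cdot\rVert_{D^{2,p}}$ (Lemma \ref{lem: norm equivalence}, valid since $\mathbb{Z}^{N}$ is abelian, hence nilpotent) identifies $\widetilde{D}^{2,p}(\mathbb{Z}^{N})$ with $D^{2,p}(\mathbb{Z}^{N})$ and $\widetilde{D}_{0}^{2,p}(\mathbb{Z}^{N})$ with $D_{0}^{2,p}(\mathbb{Z}^{N})$, so the equality follows from the case already proved. For $p=1$, where the Riesz-transform bound behind Lemma \ref{lem: norm equivalence} is unavailable, I would rerun the cutoff argument directly on the Hessian norm: the exact one-step Leibniz rule $\partial_{i}(u\eta_{n})(x)=\eta_{n}(xs_{i})\partial_{i}u(x)+u(x)\partial_{i}\eta_{n}(x)$ expands $\partial_{j}\partial_{i}(u\eta_{n})$ into four terms, controlled by the same quantities $\lVert\nabla^{2}\eta_{n}\rVert_{N/2}$ and $\lVert\nabla\eta_{n}\rVert_{N}$ (now with $\nabla u\in\ell^{1^{\ast}}$ from (\ref{eq:first-order sobo ineq-1}) at $p=1$), while the single-direction second differences of the radial profile obey the same bounds as $\Delta\eta_{n}$. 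Hence the same smoothed cutoff yields $\widetilde{D}_{0}^{2,1}(\mathbb{Z}^{N})=\widetilde{D}^{2,1}(\mathbb{Z}^{N})$.
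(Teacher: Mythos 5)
Your proposal is correct and follows essentially the same route as the paper: the soft inclusion via the second-order Sobolev inequality, and the hard inclusion via cutoffs built from the Euclidean distance with a logarithmic profile whose outer corner at $R$ is smoothed by a cubic (the paper's explicit choice is $\phi(s)=1\wedge\frac{\log(1-(1-s/R)^{3})}{\log(1-(1-r/R)^{3})}\vee0$ applied to $f(x)=\sum_i x_i^2$, estimated by the discrete chain rule), with the tilde spaces handled by the norm equivalence for $p>1$ and by the direct Hessian estimate for $p=1$. Your version is in one respect more careful than the paper's displayed inequality (\ref{eq:k=00003D00003D00003D2-1}), since you explicitly keep the cross term $2\Gamma(u,\eta_n)$ and control it by $\lVert\nabla u\rVert_{p^{\ast}}\lVert\nabla\eta_{n}\rVert_{N}$, which is indeed needed.
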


\begin{proof}
By the same argument as Theorem \ref{thm:the equivalence for Z^N_k=00003D00003D00003D1},
$D_{0}^{2,p}(G)\subseteq D^{2,p}(G)$ and $\widetilde{D}_{0}^{2,p}(\mathbb{Z}^{N})\subseteq\widetilde{D}^{2,p}(\mathbb{Z}^{N})$
follow from the Sobolev inequalities (\ref{eq: second-order sobo2})
and (\ref{eq: second-order Sobolev inequ_hessian}).

In the other direction, the key is to find suitable cutoff functions
$\eta_{n}(x)\in C_{0}\left(\mathbb{Z}^{N}\right)$. For any $u\in D^{2,p}(\mathbb{Z}^{N})$(resp.
$\widetilde{D}^{2,p}(\mathbb{Z}^{N})$), set $u_{n}\coloneqq u\eta_{n}\in C_{0}\left(\mathbb{Z}^{N}\right)$,
then by H$\ddot{\text{o}}$lder inequalities 
\begin{equation}
\lVert u_{n}-u\rVert_{D^{2,p}(\mathbb{Z}^{N})}^{p}\lesssim\sum\limits _{x\in\mathbb{Z}^{N}}\lvert\Delta u(x)\mid^{p}\underset{y\sim x}{\text{max}}\mid\eta_{n}(y)-1\mid^{p}+\lVert\Delta\eta_{n}\rVert_{\ell^{\frac{N}{2}}}^{p}\lVert u\rVert_{\ell^{p^{\ast\ast}}}^{p}\label{eq:k=00003D00003D00003D2-1}
\end{equation}
and 
\begin{align}
\lVert u_{n}-u\rVert_{\tilde{D}^{2,p}(\mathbb{Z}^{N})}^{p} & \lesssim\sum\limits _{x\in\mathbb{Z}^{N}}\stackrel[i,j=1]{m}{\sum}\{\lvert\partial_{j}\partial_{i}u(x)\mid^{p}\mid\eta_{n}(xs_{i}s_{j})-1\mid^{p}+\lvert\partial_{j}u(x)\mid^{p}\mid\partial_{i}\eta_{n}(xs_{j})\mid^{p}\nonumber \\
 & +\lvert\partial_{i}u(x)\mid^{p}\mid\partial_{j}\eta_{n}(xs_{i})\mid^{p}+\lvert u(x)\mid^{p}\mid\partial_{j}\partial_{i}\eta_{n}(x)\mid^{p}\}\label{eq: k=00003D00003D00003D2-2}\\
 & \lesssim\sum\limits _{x\in\mathbb{Z}^{N}}\stackrel[i,j=1]{m}{\sum}\lvert\partial_{j}\partial_{i}u(x)\mid^{p}\underset{i,j}{\text{max}}\mid\eta_{n}(xs_{i}s_{j})-1\mid^{p}+\lVert\nabla\eta_{n}\rVert_{\ell^{N}}^{p}\lVert\nabla u\rVert_{\ell^{p^{\ast}}}^{p}\nonumber \\
 & +\lVert\nabla^{2}\eta_{n}\rVert_{\ell^{\frac{N}{2}}}^{p}\lVert u\rVert_{\ell^{p^{\ast\ast}}}^{p}.\nonumber 
\end{align}
Hence, if the cutoff functions satisfy 
\[
\eta_{n}\text{ is uniformly bounded,}
\]
\begin{equation}
\eta_{n}\to1\text{ pointwise on \ensuremath{\mathbb{Z}^{N}}}\label{eq: cut fun req 1-2}
\end{equation}
and

\[
\begin{cases}
\begin{array}{cc}
\lVert\Delta\eta_{n}\rVert_{\ell^{\frac{N}{2}}(\mathbb{Z}^{N})}\rightarrow0,\:\text{for }D^{2,p}(\mathbb{Z}^{N}),\\
\lVert\nabla\eta_{n}\rVert_{\ell^{N}(\mathbb{Z}^{N})},\:\lVert\nabla^{2}\eta_{n}\rVert_{\ell^{\frac{N}{2}}(\mathbb{Z}^{N})}\rightarrow0,\:\text{for }\widetilde{D}^{2,p}(\mathbb{Z}^{N}),
\end{array}\end{cases}
\]
then we can prove the other direction by the dominated convergence
theorem. We distinguish two cases and construct cutoff functions respectively:
\begin{figure}[h]
\includegraphics[scale=0.4]{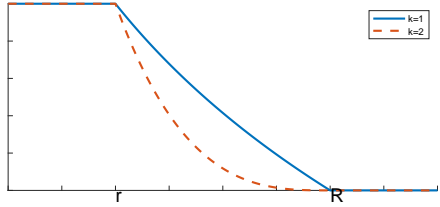}

\caption{The figure shows the cutoff functions constructed for $D^{1,p}(\mathbb{Z}^{N})$
(solid line) and $D^{2,p}(\mathbb{Z}^{N})$ (dotted line), respectively.}
\end{figure}

Case 1. For $D^{2,p}(\mathbb{Z}^{N})$, define $f(x)\coloneqq\sum\limits _{i}x_{i}^{2}$
and 
\[
\phi(s)\coloneqq1\wedge\frac{\textrm{log\ensuremath{(1-(1-\frac{s}{R})^{3})}}}{\textrm{log\ensuremath{(1-(1-\frac{r}{R})^{3})}}}\vee0.
\]
Let $\eta(x)\coloneqq\phi(f(x))$, $X\coloneqq\left\{ x\in\mathbb{Z}^{N}:f(y)>r\text{ for all \ensuremath{y\sim x}}\right\} $.
Then $\Delta f=2N$, $\Gamma f=4f+N$. The numbers $r$ and $R$ rather
stand for the square of the distance as $f$ is the square of the
Euclidean distance. By definition $\phi\in C^{2}\left(r,\infty\right)$
and 
\[
-(\text{log}\ensuremath{(\frac{R}{c_{1}r})})^{-1}\frac{c_{2}}{s}\leq\phi^{\prime}(s)\leq0\leq\phi^{\prime\prime}(s)\leq(\text{log}\ensuremath{(\frac{R}{c_{1}r})})^{-1}\frac{c_{3}}{s^{2}},\;\forall s\in(r,R].
\]
By the discrete chain rule (Lemma \ref{lem:chain rule}) on $X$,
we have 
\[
\Delta\eta(x)=\Delta\phi(f)(x)=2N\phi^{\prime}\left(f(x)\right)+(4f(x)+N)\phi^{\prime\prime}(\xi^{2})
\]
with $f(x)-2\sqrt{f(x)}+1\leq\xi\leq f(x)+2\sqrt{f(x)}+1$. And thus
if $100<r<f(y)$ for all $y\sim x$, we have $2\xi^{2}\geq f^{2}(x)$
giving 
\[
-(\text{log}\ensuremath{(\frac{R}{c_{1}r})})^{-1}2N\frac{c_{2}}{f(x)}\leq\Delta\eta(x)\leq(\text{log}\ensuremath{(\frac{R}{c_{1}r})})^{-1}(4f(x)+N)\frac{2c_{3}}{f^{2}(x)}.
\]
Then taking $c\coloneqq\text{max}\left\{ 2c_{2},9c_{3}\right\} $,
we get 
\[
\mid\Delta\eta(x)\mid\leq\frac{cN}{f(x)\text{log}\ensuremath{(\frac{R}{c_{1}r})}}\sim\frac{1}{f(x)\text{log}\ensuremath{\frac{R}{r}}}.
\]
Thus, 
\begin{align}
\sum\limits _{x\in X}\mid\Delta\eta(x)\mid^{\frac{N}{2}} & \lesssim\left(\frac{1}{\text{log}\ensuremath{\frac{R}{r}}}\right)^{\frac{N}{2}}\sum\limits _{r<f(x)\leq R}f(x)^{-\frac{N}{2}}\label{eq:estimate on X-1}\\
 & \lesssim\left(\frac{1}{\text{log}\ensuremath{\frac{R}{r}}}\right)^{\frac{N}{2}}\int\limits _{\widetilde{B}_{\sqrt{R}+1}\setminus\widetilde{B}_{\sqrt{r}-1}}\mid t\mid^{-N}\text{d\ensuremath{t}}\nonumber \\
 & \lesssim O\left(\textrm{\ensuremath{\left(\text{log}\ensuremath{\frac{R}{r}}\right)}}^{1-\frac{N}{2}}\right).\nonumber 
\end{align}
This goes to $0$ as $R\to\infty$ for $N\geq3$. The boundary at
$\sqrt{r}$ does not matter as we can keep $r$ constant and let $R\to\infty$.
At the neighbourhood of boundary $\partial\sqrt{r}$, i.e. $Y\coloneqq\left\{ x\in\mathbb{Z}^{N}:\sqrt{r}-2\leq\mid x\mid\leq\sqrt{r}+2\right\} $,
by the mean value theorem we have 
\[
\mid\Delta\eta(x)\mid=\mid\underset{y\sim x}{\sum}\left(\phi(f(y))-\phi(f(x))\right)\mid\lesssim\mid\phi'(r)\mid\mid f(y)-f(x)\mid\lesssim\frac{1}{\sqrt{r}\text{log}\ensuremath{\frac{R}{r}}},
\]
Since the cardinality of $Y$ is finite, then 
\begin{equation}
\sum\limits _{x\in Y}\mid\Delta\eta(x)\mid^{\frac{N}{2}}\lesssim\sum\limits _{x\in Y}\left(\frac{1}{\sqrt{r}\text{log}\ensuremath{\frac{R}{r}}}\right)^{\frac{N}{2}}\to0\text{ as }R\to\infty.\label{eq: estimate on Y-1}
\end{equation}
Hence, by (\ref{eq:estimate on X-1}) and (\ref{eq: estimate on Y-1})
we know 
\[
\lVert\Delta\eta(x)\rVert_{\ell^{\frac{N}{2}}(\mathbb{Z}^{N})}\to0\text{ as }R\to\infty,
\]
and we prove $u\in D_{0}^{2,p}(\mathbb{Z}^{N})$ by (\ref{eq:k=00003D00003D00003D2-1}).

Case 2. For $\widetilde{D}^{2,p}(\mathbb{Z}^{N})$, set $\eta(x)\coloneqq\phi(f(x))$
as in Case 1. Then we can check that 
\[
\lVert\nabla\eta\rVert_{\ell^{N}(\mathbb{Z}^{N})}^{N}\lesssim O\left(\textrm{\ensuremath{\left(\text{log}\ensuremath{\frac{R}{r}}\right)}}^{1-N}\right).
\]
By the norm equivalence (Lemma \ref{lem: norm equivalence}) and the
result in Case 1, we know 
\[
\lVert\nabla^{2}\eta\rVert_{\ell^{\frac{N}{2}}(\mathbb{Z}^{N})}^{\frac{N}{2}}\sim\lVert\Delta\eta\rVert_{\ell^{\frac{N}{2}}(\mathbb{Z}^{N})}^{\frac{N}{2}}\lesssim O\left(\textrm{\ensuremath{\left(\text{log}\ensuremath{\frac{R}{r}}\right)}}^{1-\frac{N}{2}}\right).
\]
Hence, $\widetilde{D}^{2,p}(\mathbb{Z}^{N})\subseteq\widetilde{D}_{0}^{2,p}(\mathbb{Z}^{N})$
by (\ref{eq: k=00003D00003D00003D2-2}). This finishes the proof. 
\end{proof}
\begin{rem*}
(1) The cutoff function $\eta$ defined for $D^{1,p}(\mathbb{Z}^{N})$
can not work for $D^{2,p}(\mathbb{Z}^{N})$ directly since the Laplacian
is \textquotedbl bad\textquotedbl{} when it comes to the sharp corner
$R$. We find a way to fix this by taking a \textquotedbl close to
linear\textquotedbl{} function $\phi$ being smooth at the boundary
$R$. Now the Euclidean distance is necessary for constructing the
cutoff function since its Laplacian is decreasing while the combinatorial
distance is not. Then using the chain rule we can get a good estimate.

(2) For $1<p<\dfrac{N}{2},$ by Lemma \ref{lem: norm equivalence}
and Theorem \ref{thm: the equivalence for Z^N_k=00003D00003D00003D2},
we know $D^{2,p}(\mathbb{Z}^{N})=D_{0}^{2,p}(\mathbb{Z}^{N})=\widetilde{D}_{0}^{2,p}(\mathbb{Z}^{N})=$$\widetilde{D}^{2,p}(\mathbb{Z}^{N})$. 
\end{rem*}
Then we prove the higher-order equivalence on a Cayley graph $G$
using the functional analysis. 
\begin{thm}
\label{thm: the equivalence for G_k=00003D00003D00003D2}If $N\geq3$,
$1<p<\dfrac{N}{2}$, then $D_{0}^{2,p}(G)=D^{2,p}(G)$. 
\end{thm}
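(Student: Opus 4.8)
The plan is to exploit that, by the very definition of the $D^{2,p}$-norm, the Laplacian is an isometry $\Delta\colon D^{2,p}(G)\to\ell^{p}(G)$, and to identify its image precisely. First I would dispose of the inclusion $D_{0}^{2,p}(G)\subseteq D^{2,p}(G)$ exactly as in Theorem \ref{thm: the equivalence for Z^N_k=00003D00003D00003D2}: a sequence in $C_{0}(G)$ that is Cauchy in the $D^{2,p}$-norm is, by the second-order Sobolev inequality (\ref{eq: second-order sobo2}), Cauchy in $\ell^{p^{\ast\ast}}(G)$, hence converges to a genuine function $u\in\ell^{p^{\ast\ast}}(G)$ with $\Delta u\in\ell^{p}(G)$; this realizes the abstract completion $D_{0}^{2,p}(G)$ as a space of honest functions on which $\Delta$ is genuinely applied. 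The map $\Delta$ is then an isometry on $D^{2,p}(G)$ because $\lVert u\rVert_{D^{2,p}}=\lVert\Delta u\rVert_{p}$ by definition, and it is injective: if $\Delta u=0$ with $u\in\ell^{p^{\ast\ast}}(G)$, then $u$ is harmonic and tends to $0$ at infinity (as $p^{\ast\ast}<\infty$), so $u\equiv0$ by the maximum principle on the connected Cayley graph. The same holds for the restriction to $D_{0}^{2,p}(G)$.

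The heart of the argument is to show that $\Delta(D_{0}^{2,p}(G))=\ell^{p}(G)$, i.e. that $\Delta$ is \emph{surjective} on $D_{0}^{2,p}(G)$. Since $D_{0}^{2,p}(G)$ is the completion of $C_{0}(G)$ in the norm $\lVert\Delta\,\cdot\,\rVert_{p}$ and $\Delta$ is an isometry, the range $\Delta(D_{0}^{2,p}(G))$ is exactly the $\ell^{p}$-closure of $\Delta(C_{0}(G))$; as the isometric image of a Banach space it is closed. It therefore suffices to prove that $\Delta(C_{0}(G))$ is dense in $\ell^{p}(G)$, which by Hahn--Banach amounts to showing its annihilator in the dual $\ell^{p'}(G)$, $p'=\frac{p}{p-1}$, is trivial. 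If $h\in\ell^{p'}(G)$ satisfies $\langle\Delta u,h\rangle=0$ for all $u\in C_{0}(G)$, then by the symmetry of $\Delta$ and the finite support of $u$ one has $\langle u,\Delta h\rangle=0$ for all such $u$, forcing $\Delta h\equiv0$ pointwise. Since $p>1$ gives $p'<\infty$, the harmonic function $h$ lies in $\ell^{p'}(G)$ and tends to $0$ at infinity, so the maximum principle yields $h\equiv0$. Hence $\Delta(D_{0}^{2,p}(G))=\ell^{p}(G)$.

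With surjectivity in hand the reverse inclusion is immediate: given $u\in D^{2,p}(G)$ we have $\Delta u\in\ell^{p}(G)=\Delta(D_{0}^{2,p}(G))$, so there is $v\in D_{0}^{2,p}(G)$ with $\Delta v=\Delta u$. Because $D_{0}^{2,p}(G)\subseteq D^{2,p}(G)\subseteq\ell^{p^{\ast\ast}}(G)$, the difference $w:=u-v$ belongs to $\ell^{p^{\ast\ast}}(G)$ and is harmonic, so the Liouville argument above forces $w\equiv0$, giving $u=v\in D_{0}^{2,p}(G)$. Combined with the first inclusion this proves $D_{0}^{2,p}(G)=D^{2,p}(G)$. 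I expect the main obstacle to be the surjectivity step: everything reduces to the $\ell^{p'}$-Liouville property for harmonic functions, so the delicate point is to verify carefully that the annihilator condition really forces pointwise harmonicity (the interchange $\langle\Delta u,h\rangle=\langle u,\Delta h\rangle$ is licit since $u$ has finite support) and that the resulting harmonic $\ell^{p'}$-function must vanish; the identification of the abstract completion with a concrete function space, though routine via Sobolev, must also be stated cleanly for the isometry picture to make sense.
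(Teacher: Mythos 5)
Your proposal is correct and follows essentially the same route as the paper: both inclusions via the second-order Sobolev inequality and the observation that $\Delta$ is an isometry onto a closed subspace of $\ell^{p}(G)$, with surjectivity reduced by duality (Hahn--Banach/closed range) to the $\ell^{p'}$-Liouville property for harmonic functions. Your concluding step (solving $\Delta v=\Delta u$ and killing the harmonic difference) is just a slightly more explicit rendering of the paper's final sentence.
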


\begin{proof}
First $D_{0}^{2,p}(G)\subseteq D^{2,p}(G)$ by the Sobolev inequalities
(\ref{eq: second-order sobo2}). The other direction can be proved
by checking the Laplace operator $\Delta$ is an isometry from $D_{0}^{2,p}(G)$
to $\ell^{p}(G)$. The injection follows from the $\ell^{p}-$Liouville
property of graphs \cite[Section 1.7]{B}. The range of $\Delta$,
denoted by $\text{Ran}(\Delta)$, is closed by the completeness of
$D_{0}^{2,p}(G)$. For any $h\in\ell^{q}(G)$, the dual space of $\ell^{p}(G)$,
\[
0=h(f),\:\forall f\in\text{Ran}(\Delta),
\]
which implies 
\[
0=\langle h,\Delta u\rangle=\langle\Delta h,u\rangle,\:\forall u\in C_{0}\left(G\right).
\]
Hence $\Delta h=0$ and $h=0$ by the $\ell^{p}-$Liouville property,
then the surjection follows from the closed range theorem. Similarly,
$\Delta$ is also an isometry from $D^{2,p}(G)$ to $\ell^{p}(G)$.
Then by the fact $D_{0}^{2,p}(G)\subseteq D^{2,p}(G)$ we know $D_{0}^{2,p}(G)=D^{2,p}(G)$. 
\end{proof}
\begin{rem*}
For the special case $p=2$, $D_{0}^{2,2}=D^{2,2}$ on general groups
satisfying the second-order Sobolev inequality, for example, polynomial
growth groups and non-amenable groups. 
\end{rem*}

\section{The Concentration-Compactness principle}

In this section, we prove the discrete Concentration-Compactness principle.
We first introduce a key lemma as in \cite[Theorem 1]{BL}.

Consider a measure space $(\Omega,\Sigma,\mu)$, which consists of
a set $\Omega$ equipped with a $\sigma$-algebra $\Sigma$ and a
Borel measure $\mu:\Sigma\to\left[0,\infty\right]$. 
\begin{lem}
\label{lem:(Discrete-Brezis-Lieb-lemma)}(Brézis-Lieb lemma) Let $(\Omega,\Sigma,\mu)$
be a measure space, $\{u_{n}\}\subset L^{p}(\Omega,\Sigma,\mu)$,
and $0<p<\infty.$ If \\
 (a)$\;\{u_{n}\}$ is uniformly bounded in $L^{p}$, and\\
 (b)$\;u_{n}\to u,n\to\infty$ $\mu$-almost everywhere in $\Omega$,
then 
\begin{equation}
\lim\limits _{n\to\infty}(\lVert u_{n}\rVert_{L^{p}}^{p}-\lVert u_{n}-u\rVert_{L^{p}}^{p})=\lVert u\rVert_{L^{p}}^{p}.\label{eq:weak}
\end{equation}
\end{lem}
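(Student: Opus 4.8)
The plan is to prove the Brézis–Lieb lemma by controlling the pointwise difference
\[
W_n(x) \coloneqq \bigl|\,|u_n(x)|^p - |u_n(x)-u(x)|^p - |u(x)|^p\,\bigr|
\]
and showing $\int_\Omega W_n\,\mathrm{d}\mu \to 0$. The identity in \eqref{eq:weak} is equivalent to this, because
\[
\Bigl|\,\|u_n\|_{L^p}^p - \|u_n-u\|_{L^p}^p - \|u\|_{L^p}^p\,\Bigr| \le \int_\Omega W_n\,\mathrm{d}\mu .
\]
The main device is an elementary pointwise inequality: for every $\varepsilon>0$ there is a constant $C_\varepsilon>0$ such that for all real (or complex) scalars $a,b$,
\[
\bigl|\,|a+b|^p - |a|^p\,\bigr| \le \varepsilon\,|a|^p + C_\varepsilon\,|b|^p .
\]
I would first establish this using the convexity/homogeneity of $t\mapsto|t|^p$ (for $p\ge 1$ via the triangle inequality and a scaling argument; for $0<p<1$ via the subadditivity $|a+b|^p\le|a|^p+|b|^p$), splitting the cases $|b|\le\delta|a|$ and $|b|>\delta|a|$.

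Next I would apply this with $a=u_n-u$ and $b=u$, so that
\[
W_n \le \bigl|\,|u_n|^p - |u_n-u|^p\,\bigr| + |u|^p \le \varepsilon\,|u_n-u|^p + (C_\varepsilon+1)\,|u|^p .
\]
Writing $g_n^\varepsilon \coloneqq \bigl(W_n - \varepsilon\,|u_n-u|^p\bigr)^+$, this bound shows $0\le g_n^\varepsilon \le (C_\varepsilon+1)\,|u|^p$, which is an integrable majorant independent of $n$. By hypothesis (b), $u_n\to u$ $\mu$-a.e., hence $W_n\to 0$ and $|u_n-u|^p\to 0$ $\mu$-a.e., so $g_n^\varepsilon\to 0$ $\mu$-a.e. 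The dominated convergence theorem then gives $\int_\Omega g_n^\varepsilon\,\mathrm{d}\mu\to 0$ as $n\to\infty$.

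To conclude, I would use $W_n \le g_n^\varepsilon + \varepsilon\,|u_n-u|^p$ and the uniform $L^p$-bound from hypothesis (a): setting $\sup_n\|u_n\|_{L^p}^p \eqqcolon A<\infty$ and noting $\|u_n-u\|_{L^p}^p \le 2^p(A+\|u\|_{L^p}^p)\eqqcolon B$ (Fatou's lemma gives $\|u\|_{L^p}^p\le A$), we obtain
\[
\limsup_{n\to\infty}\int_\Omega W_n\,\mathrm{d}\mu \le \limsup_{n\to\infty}\int_\Omega g_n^\varepsilon\,\mathrm{d}\mu + \varepsilon B = \varepsilon B .
\]
Since $\varepsilon>0$ is arbitrary and $B$ does not depend on $\varepsilon$, the limsup is $0$, proving \eqref{eq:weak}. \emph{The main obstacle} is the pointwise scalar inequality: getting a clean $C_\varepsilon$ that works uniformly for all $a,b$ requires care in the regime where $|b|$ is comparable to $|a|$, and the two ranges $p\ge 1$ and $0<p<1$ must be handled slightly differently. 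Everything after that is a standard dominated-convergence argument, with Fatou securing the needed uniform integrable majorant.
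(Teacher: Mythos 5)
Your proof is correct; it is the standard Br\'ezis--Lieb argument (the elementary inequality $\bigl|\,|a+b|^{p}-|a|^{p}\bigr|\le\varepsilon|a|^{p}+C_{\varepsilon}|b|^{p}$, the truncation $g_{n}^{\varepsilon}=(W_{n}-\varepsilon|u_{n}-u|^{p})^{+}$ dominated by $(C_{\varepsilon}+1)|u|^{p}$, Fatou to put $u$ in $L^{p}$, and dominated convergence), with all the needed details in place. The paper itself gives no proof of this lemma --- it simply quotes it from Br\'ezis--Lieb \cite[Theorem 1]{BL} --- and your argument is exactly the proof in that reference, so there is nothing to compare beyond noting that you have supplied the omitted proof faithfully.
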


\begin{rem*}
(1) The preceding lemma is a refinement of Fatou's Lemma.

(2) Since $\;\{u_{n}\}$ is uniformly bounded in $L^{p}$, passing
to a subsequence if necessary, we have 
\[
\lim\limits _{n\to\infty}\lVert u_{n}\rVert_{p}^{p}=\lim\limits _{n\to\infty}\lVert u_{n}-u\rVert_{p}^{p}+\lVert u\rVert_{p}^{p}.
\]

(3) If $\Omega$ is countable and $\mu$ is a positive measure defined
on $\Omega$, then we get a discrete version of Lemma \ref{lem:(Discrete-Brezis-Lieb-lemma)}. 
\end{rem*}
\begin{cor}
\label{cor:Corollary-1.} Let $\Omega\subset G$, ${\left\{ u_{n}\right\} }\subset D^{2,p}(G)$,
and $1<p<\infty$. If \\
 (a)$'$ $\left\{ {u_{n}}\right\} $ is uniformly bounded in $D^{2,p}(G)$,
and\\
 (b)$'$ $u_{n}\to u,n\to\infty$ pointwise on $G$, then 
\begin{equation}
\lim\limits _{n\to\infty}\left(\sum\limits _{x\in\Omega}\lvert\Delta u_{n}(x)\rvert^{p}-\sum\limits _{x\in\Omega}\lvert\Delta(u_{n}-u)(x)\rvert^{p}\right)=\sum\limits _{x\in\Omega}\lvert\Delta u(x)\rvert^{p}.\label{eq:gradient}
\end{equation}
\end{cor}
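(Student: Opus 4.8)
The plan is to recognize Corollary \ref{cor:Corollary-1.} as a direct instance of the Br\'ezis--Lieb lemma (Lemma \ref{lem:(Discrete-Brezis-Lieb-lemma)}), applied not to $\{u_n\}$ itself but to the transformed sequence $v_n := \Delta u_n$, regarded as functions on the measure space $(\Omega, 2^{\Omega}, \mu)$ with $\mu$ the counting measure. Under this identification $L^p(\Omega,\mu) = \ell^p(\Omega)$ and $\lVert w\rVert_{L^p}^p = \sum_{x\in\Omega}\lvert w(x)\rvert^p$, so that (\ref{eq:gradient}) is literally the conclusion of the abstract lemma for $\{v_n\}$, once one records that $v_n - v = \Delta(u_n - u)$ by linearity of $\Delta$. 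The entire task is thus to verify that hypotheses (a) and (b) of Lemma \ref{lem:(Discrete-Brezis-Lieb-lemma)} hold for $\{v_n\}$.

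For hypothesis (a), I would use that by the very definition of the second-order norm one has $\lVert \Delta u_n\rVert_{\ell^p(G)} = \lVert u_n\rVert_{D^{2,p}(G)}$, which is bounded uniformly in $n$ by assumption (a)$'$. Restricting the summation from $G$ to $\Omega$ only decreases the norm, so $\{v_n\}$ is uniformly bounded in $L^p(\Omega,\mu)$; in particular each $v_n$ lies in $L^p(\Omega,\mu)$.

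The one substantive point is hypothesis (b), and it is exactly here that the graph structure enters. Because $G$ is a regular, hence locally finite, graph, the Laplacian $\Delta w(x) = \sum_{y\sim x}(w(y) - w(x))$ is a \emph{local} operator: for each fixed $x$ its value is a finite linear combination of the values of $w$ on the finite set $\{x\}\cup\{y : y\sim x\}$. Consequently the pointwise convergence $u_n \to u$ supplied by (b)$'$ propagates to the Laplacians, since for every fixed $x\in\Omega$
\[
\Delta u_n(x) = \sum_{y\sim x}\bigl(u_n(y) - u_n(x)\bigr) \longrightarrow \sum_{y\sim x}\bigl(u(y) - u(x)\bigr) = \Delta u(x),
\]
so that $v_n \to v := \Delta u$ everywhere on $\Omega$, in particular $\mu$-almost everywhere.

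With both hypotheses in hand I would invoke Lemma \ref{lem:(Discrete-Brezis-Lieb-lemma)}, whose requirement $0<p<\infty$ is met because $1<p<\infty$, obtaining both $v \in L^p(\Omega,\mu)$, via the Fatou-type bound built into the lemma, and
\[
\lim_{n\to\infty}\bigl(\lVert v_n\rVert_{L^p}^p - \lVert v_n - v\rVert_{L^p}^p\bigr) = \lVert v\rVert_{L^p}^p.
\]
Rewriting $v_n - v = \Delta(u_n - u)$ turns this identity into (\ref{eq:gradient}), which completes the proof. I do not anticipate a genuine obstacle: the argument is a clean reduction to the abstract lemma, and the only step demanding attention is the transfer of the two hypotheses, of which the pointwise convergence of $\Delta u_n$ is the essential one and rests entirely on the locality of $\Delta$ on the locally finite graph $G$.
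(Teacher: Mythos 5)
Your proposal is correct and follows essentially the same route as the paper: apply the Br\'ezis--Lieb lemma to the sequence $\Delta u_n$ on $\Omega$ with counting measure, noting that uniform boundedness in $D^{2,p}$ gives hypothesis (a) and that pointwise convergence of $u_n$ passes to $\Delta u_n$ by locality of the Laplacian on a locally finite graph. The paper simply asserts these two hypotheses without comment, whereas you spell out the locality argument, but the proof is the same.
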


\begin{proof}
Since $\left\{ {\Delta u_{n}}\right\} $ is uniformly bounded in $\ell^{p}(G)$
and $\Delta u_{n}\rightarrow\Delta u,n\rightarrow\infty$ pointwise
on $G$, by Lemma \ref{lem:(Discrete-Brezis-Lieb-lemma)} we get 
\[
\lim\limits _{n\to\infty}(\lVert\Delta u_{n}\rVert_{\ell^{p}(\Omega)}^{p}-\lVert\Delta\left(u_{n}-u\right)\rVert_{\ell^{p}(\Omega)}^{p})=\lVert\Delta u\rVert_{\ell^{p}(\Omega)}^{p},
\]
which is equivalent to the equation (\ref{eq:gradient}). 
\end{proof}
Next, we establish the Concentration-Compactness principle on $G$. 
\begin{lem}
\label{lem:Concentration-Compact}(Discrete Concentration-Compactness
lemma) For $N\geq3,1<p<\frac{N}{2},q\geq p^{\ast\ast}$, if $\{u_{n}\}$
is uniformly bounded in $D^{2,p}(G)$. Then passing to a subsequence,
still denoted by $\{u_{n}\}$, we have 
\begin{equation}
u_{n}\longrightarrow u\quad\textrm{pointwise\;on}\;G,\label{eq:pointwise}
\end{equation}
\begin{equation}
\lvert\Delta u_{n}\rvert^{p}\xrightharpoonup{w^{\ast}}\lvert\Delta u\rvert^{p}\quad\textrm{in}\;\ell^{1}(G).\label{eq:w-star}
\end{equation}
And the following limits 
\[
\lim\limits _{R\to\infty}\lim\limits _{n\to\infty}\sum\limits _{d^{S}(x,e)>R}\lvert\Delta u_{n}\left(x\right)\rvert^{p}\coloneqq\mu_{\infty},\:\lim\limits _{R\to\infty}\lim\limits _{n\to\infty}\sum\limits _{d^{S}(x,e)>R}\lvert u_{n}(x)\rvert^{q}\coloneqq\nu_{\infty},
\]
exist. For the above $\left\{ u_{n}\right\} $, we have 
\begin{equation}
\lvert\Delta(u_{n}-u)\rvert^{p}\xrightharpoonup{w^{\ast}}0\quad\text{in}\;\ell^{1}(G),\label{eq:nu=00003D00003D00003D00003D00003D0}
\end{equation}
\begin{equation}
\lvert u_{n}-u\rvert^{q}\xrightharpoonup{w^{\ast}}0\quad\text{in}\;\ell^{1}(G),\label{eq:mu=00003D00003D00003D00003D00003D0}
\end{equation}
\begin{equation}
\nu_{\infty}^{p/q}\leq K^{-1}\mu_{\infty},\label{eq:infinite}
\end{equation}
\begin{equation}
\lim\limits _{n\to\infty}\lVert u_{n}\rVert_{D^{2,p}}^{p}=\lVert u\rVert_{D^{2,p}}^{p}+\mu_{\infty},\label{eq:composition1}
\end{equation}
\begin{equation}
\lim\limits _{n\to\infty}\lVert u_{n}\rVert_{q}^{q}=\lVert u\rVert_{q}^{q}+\nu_{\infty}.\label{eq:composition2}
\end{equation}
\end{lem}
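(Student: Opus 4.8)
Since $\{u_n\}$ is bounded in $D^{2,p}(G)$, Theorem~\ref{thm: second order sobo ineq} shows it is bounded in $\ell^{p^{\ast\ast}}(G)$ and hence, because $q\ge p^{\ast\ast}$, in $\ell^{q}(G)$; in particular $\sup_n|u_n(x)|<\infty$ for each fixed $x$. As $G$ is countable, a diagonal extraction gives a subsequence along which $u_n(x)$ converges for every $x$; call the limit $u(x)$, and note $u\in\ell^{q}(G)\cap D^{2,p}(G)$ by Fatou. This is \eqref{eq:pointwise}. For the weak-$\ast$ conclusions the plan is to record the elementary fact that, on the discrete space $G$, a sequence bounded in $\ell^{1}(G)$ that converges pointwise to $w$ also converges to $w$ weakly-$\ast$: given $\varphi\in c_0(G)$ and $\varepsilon>0$, split $G$ into a finite set $F$, where pointwise convergence handles the partial sum, and its complement where $|\varphi|<\varepsilon$, whose contribution is at most $\varepsilon\sup_n\|\cdot\|_{1}$. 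Applying this to $\{|\Delta u_n|^{p}\}$ (bounded in $\ell^{1}$, converging pointwise to $|\Delta u|^{p}$ since $\Delta$ is local) gives \eqref{eq:w-star}; applying it to $\{|\Delta(u_n-u)|^{p}\}$ and to $\{|u_n-u|^{q}\}$, both of which tend to $0$ pointwise, gives \eqref{eq:nu=00003D00003D00003D00003D00003D0} and \eqref{eq:mu=00003D00003D00003D00003D00003D0}.

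\textbf{Existence of $\mu_\infty,\nu_\infty$ and the decompositions.} Passing to a further subsequence I may assume the bounded real sequences $\|\Delta u_n\|_{p}^{p}$ and $\|u_n\|_{q}^{q}$ converge. For each fixed $R$ the ball $B_e^{S}(R)$ is finite, so pointwise convergence yields
\[
\lim_{n\to\infty}\sum_{d^{S}(x,e)>R}|\Delta u_n(x)|^{p}=\lim_{n\to\infty}\|\Delta u_n\|_{p}^{p}-\sum_{d^{S}(x,e)\le R}|\Delta u(x)|^{p},
\]
and letting $R\to\infty$ the subtracted sum increases to $\|\Delta u\|_{p}^{p}=\|u\|_{D^{2,p}}^{p}$. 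Hence the double limit defining $\mu_\infty$ exists and equals $\lim_n\|u_n\|_{D^{2,p}}^{p}-\|u\|_{D^{2,p}}^{p}$, which is exactly \eqref{eq:composition1}; the identical argument with $q$ in place of $p$ produces $\nu_\infty$ and \eqref{eq:composition2}. (Alternatively, the Brézis--Lieb splitting of Corollary~\ref{cor:Corollary-1.} identifies $\mu_\infty=\lim_n\|\Delta(u_n-u)\|_{p}^{p}$.)

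\textbf{The inequality \eqref{eq:infinite}.} This is the heart of the lemma. The plan is to localise at infinity by a cutoff $\psi_R$ with $0\le\psi_R\le1$, $\psi_R\equiv0$ on $B_e^{S}(R)$ and $\psi_R\equiv1$ off a larger ball $B_e^{S}(\rho_R)$, to apply the Sobolev inequality \eqref{eq: second-order sobo2} to $\psi_Ru_n$,
\[
K\|\psi_Ru_n\|_{q}^{p}\le\|\Delta(\psi_Ru_n)\|_{p}^{p},
\]
and then to let $n\to\infty$ followed by $R\to\infty$. Squeezing $\mathbf 1_{\{d^{S}(\cdot,e)>\rho_R\}}\le\psi_R\le\mathbf 1_{\{d^{S}(\cdot,e)>R\}}$ sends the left side to $K\nu_\infty^{p/q}$ and controls $\|\psi_R\Delta u_n\|_{p}^{p}$ by the tail sums defining $\mu_\infty$. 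The work is in the product rule
\[
\Delta(\psi_Ru_n)=\psi_R\,\Delta u_n+u_n\,\Delta\psi_R+2\,\Gamma(\psi_R,u_n),\qquad \Gamma(\psi_R,u_n)(x)=\tfrac12\sum_{y\sim x}(\psi_R(y)-\psi_R(x))(u_n(y)-u_n(x)),
\]
and in showing the two cross terms are negligible. For the carré-du-champ term, Cauchy--Schwarz and Hölder (with $\tfrac1p=\tfrac1N+\tfrac1{p^{\ast}}$) give $\|\Gamma(\psi_R,u_n)\|_{p}\lesssim\|\nabla\psi_R\|_{\ell^{N}}\,\|\nabla u_n\|_{\ell^{p^{\ast}}}$, where $\|\nabla u_n\|_{\ell^{p^{\ast}}}\lesssim\|\Delta u_n\|_{p}$ is bounded by the Riesz-transform estimates in the proof of Theorem~\ref{thm: second order sobo ineq}; since $G$ is $N$-parabolic one may take $\psi_R$ with $\|\nabla\psi_R\|_{\ell^{N}}\to0$ (as in Theorem~\ref{thm: the equivalence for G}), so this term vanishes uniformly in $n$.

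\textbf{The main obstacle.} The remaining term $u_n\,\Delta\psi_R$ is where the difficulty lies. The trick I would use is to fix $R$ and send $n\to\infty$ first: for fixed $R$ the function $\Delta\psi_R$ is supported on the finite annulus on which $\psi_R$ is non-constant, so $u_n\,\Delta\psi_R\to u\,\Delta\psi_R$ in $\ell^{p}$, and then by Hölder with $\tfrac1p=\tfrac1{p^{\ast\ast}}+\tfrac2N$,
\[
\|u\,\Delta\psi_R\|_{p}\le\Big(\sum_{x\in\mathrm{supp}\,\Delta\psi_R}|u(x)|^{p^{\ast\ast}}\Big)^{1/p^{\ast\ast}}\|\Delta\psi_R\|_{\ell^{N/2}},
\]
where the first factor tends to $0$ as $R\to\infty$ because $u\in\ell^{p^{\ast\ast}}(G)$ and the annulus escapes to infinity. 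Thus it suffices to keep $\|\Delta\psi_R\|_{\ell^{N/2}}$ bounded along the family. The hard part is precisely this second-order control: the naive bound $|\Delta\psi_R|\le|\nabla\psi_R|_1$ discards the cancellation and is too weak (it already fails to stay bounded), so one must take a ``close to linear'' cutoff that is genuinely smooth at scale $R$. On $\mathbb{Z}^{N}$ this is the log--cubic profile $\phi$ of Theorem~\ref{thm: the equivalence for Z^N_k=00003D00003D00003D2} composed with the squared Euclidean distance, which in fact forces $\|\Delta\psi_R\|_{\ell^{N/2}}\to0$; on a general polynomial-growth $G$ (virtually nilpotent by Gromov) one needs the analogous construction built from a smooth homogeneous norm on the associated nilpotent group. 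Granting such cutoffs, letting $n\to\infty$ then $R\to\infty$ in the displayed Sobolev inequality yields $K\nu_\infty^{p/q}\le\mu_\infty$, which is \eqref{eq:infinite}, and completes the proof.
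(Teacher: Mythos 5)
Most of your proof is sound and matches the paper: the diagonal extraction for \eqref{eq:pointwise}, the elementary finite-set argument for the weak-$\ast$ statements, and your derivation of $\mu_\infty,\nu_\infty$ together with \eqref{eq:composition1}--\eqref{eq:composition2} by subtracting the (finite) ball $B_e^S(R)$ is a clean equivalent of the paper's use of $1-\Psi_R$ and Corollary \ref{cor:Corollary-1.}.

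The gap is in your proof of \eqref{eq:infinite}, and you have in fact located it yourself: you apply the Sobolev inequality to $\psi_R u_n$, which forces you to control the cutoff quantitatively, namely to produce a family $\psi_R$ on a \emph{general} Cayley graph of polynomial growth with $\lVert\nabla\psi_R\rVert_{\ell^{N}}\to0$ \emph{and} $\lVert\Delta\psi_R\rVert_{\ell^{N/2}}$ bounded. The paper constructs such second-order-controlled cutoffs only on $\mathbb{Z}^{N}$ (the log--cubic profile of Theorem \ref{thm: the equivalence for Z^N_k=00003D00003D00003D2}); for general $G$ it explicitly avoids them, proving Theorem \ref{thm: the equivalence for G_k=00003D00003D00003D2} by a closed-range argument precisely because no such construction is available. "Granting such cutoffs" is therefore an unproved hypothesis, and your argument does not close on the graphs the lemma is stated for. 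The paper's proof sidesteps the issue entirely by localizing $v_n:=u_n-u$ rather than $u_n$, with $\Psi_R$ the sharp indicator of $\{d^S(\cdot,e)>R\}$: in
\[
\Delta(\Psi_R v_n)(x)=\sum_{y\sim x}\nabla_{xy}\Psi_R\, v_n(y)+\Psi_R(x)\Delta v_n(x),
\]
the first term is supported, for fixed $R$, on a finite annulus, and it tends to $0$ as $n\to\infty$ simply because $v_n\to0$ pointwise --- no norm bound on $\nabla\Psi_R$ or $\Delta\Psi_R$ is ever needed. A Young-type inequality with $C_\varepsilon$ then gives
\[
\varlimsup_{n\to\infty}\Bigl(\sum_{d^S(x,e)>R}\lvert v_n\rvert^{q}\Bigr)^{p/q}\leq K^{-1}\varlimsup_{n\to\infty}\sum_{d^S(x,e)>R}\lvert\Delta v_n\rvert^{p},
\]
and the Br\'ezis--Lieb identities convert the tails of $v_n$ back into $\mu_\infty$ and $\nu_\infty$ (the tails of $u$ vanish as $R\to\infty$), yielding \eqref{eq:infinite}. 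You should replace your localization of $u_n$ by this localization of $u_n-u$; with that change the rest of your argument goes through.
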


\begin{proof}
Since $\{u_{n}\}$ is uniformly bounded in $\ell^{q}(G)$, they are
bounded in $\ell^{\infty}(G)$. By diagonal principle, passing to
a subsequence we get (\ref{eq:pointwise}). Since $\left\{ \lvert\Delta u_{n}\rvert^{p}\right\} $
is uniformly bounded in $\ell^{1}(G)$, we get (\ref{eq:w-star})
by the Banach-Alaoglu theorem and (\ref{eq:pointwise}). For every
$R\geq1$, passing to a subsequence if necessary, 
\[
\lim\limits _{n\to\infty}\sum\limits _{d^{S}(x,e)>R}\lvert\Delta u_{n}\left(x\right)\rvert^{p},\qquad\lim\limits _{n\to\infty}\sum\limits _{d^{S}(x,e)>R}\lvert u_{n}(x)\rvert^{q}.
\]
exist. Then we can define $\mu_{\infty}$, $\nu_{\infty}$ by the
monotonicity in $R$.

Let $v_{n}:=u_{n}-u$, then $v_{n}\to0$ pointwise on $G$ and $\left\{ \lvert\Delta v_{n}\rvert^{p}\right\} $
is uniformly bounded in $\ell^{1}(G)$. Then any subsequence of $\left\{ \lvert\Delta v_{n}\rvert^{p}\right\} $
contains a subsequence, still denoted by $\left\{ \lvert\Delta v_{n}\rvert^{p}\right\} $,
that $w^{\ast}$-converges to 0 in $\ell^{1}(G)$, which follows from
\[
\sum h\lvert\Delta v_{n}\rvert^{p}\to0,\text{ }\forall h\in C_{0}(G).
\]
Hence we obtain (\ref{eq:nu=00003D00003D00003D00003D00003D0}). Similarly,
we get (\ref{eq:mu=00003D00003D00003D00003D00003D0}).

For $R\geq1$, let $\Psi_{R}\in C(G)$ such that $\Psi_{R}(x)=1$
for $d^{S}(x,e)\geq R+1$, $\Psi_{R}(x)=0$ for $d^{S}(x,e)\leq R$.
By the discrete Sobolev inequality (\ref{eq:second-order Sobo ineq}),
we have 
\[
(\sum\lvert\Psi_{R}v_{n}\rvert^{q})^{p/q}\leq K^{-1}\sum\lvert\Delta(\Psi_{R}v_{n})\rvert^{p}=K^{-1}\underset{x}{\sum}\lvert\underset{y\sim x}{\sum}\left(\nabla_{xy}\Psi_{R}v_{n}(y)+\Psi_{R}(x)\nabla_{xy}v_{n}\right)\rvert^{p}.
\]
For any $\varepsilon>0$, there exists $C_{\varepsilon}>0$ such that
\[
\lvert\underset{y\sim x}{\sum}\nabla_{xy}\Psi_{R}v_{n}(y)+\underset{y\sim x}{\sum}\Psi_{R}(x)\nabla_{xy}v_{n}\rvert^{p}\leq C_{\varepsilon}\lvert\underset{y\sim x}{\sum}\nabla_{xy}\Psi_{R}v_{n}(y)\lvert^{p}+(1+\varepsilon)\lvert\Delta v_{n}\lvert^{p}\lvert\Psi_{R}(x)\lvert^{p}.
\]
Since $v_{n}\to0$ pointwise on $G$, by $\varepsilon\to0^{+}$ we
obtain 
\begin{equation}
\varlimsup\limits _{n\to\infty}(\sum\lvert\Psi_{R}v_{n}\rvert^{q})^{p/q}\leq K^{-1}\varlimsup\limits _{n\to\infty}\sum\lvert\Delta v_{n}\rvert^{p}\Psi_{R}^{p}.\label{eq:c-c1}
\end{equation}
From the definition of $\Psi_{R}$, 
\begin{equation}
\varlimsup\limits _{n\to\infty}(\sum\limits _{d^{S}(x,e)>R}\lvert v_{n}\rvert^{q})^{p/q}\leq K^{-1}\varlimsup\limits _{n\to\infty}\sum\limits _{d^{S}(x,e)>R}\lvert\Delta v_{n}\rvert^{p}.\label{eq: c-c2}
\end{equation}
By Lemma \ref{lem:(Discrete-Brezis-Lieb-lemma)} and Corollary \ref{cor:Corollary-1.},
we have 
\[
\lim\limits _{n\to\infty}\left(\sum\limits _{d^{S}(x,e)>R}\lvert\Delta u_{n}(x)\rvert^{p}-\sum\limits _{d^{S}(x,e)>R}\lvert\Delta v_{n}(x)\rvert^{p}\right)=\sum\limits _{d^{S}(x,e)>R}\lvert\Delta u(x)\rvert^{p},
\]
\[
\lim\limits _{n\to\infty}\left(\sum\limits _{d^{S}(x,e)>R}\lvert u_{n}(x)\rvert^{q}-\sum\limits _{d^{S}(x,e)>R}\lvert v_{n}(x)\rvert^{q}\right)=\sum\limits _{d^{S}(x,e)>R}\lvert u(x)\rvert^{q}.
\]
Hence by letting $R\to\infty$, we get 
\begin{equation}
\lim\limits _{R\to\infty}\lim\limits _{n\to\infty}\sum\limits _{d^{S}(x,e)>R}\lvert\Delta v_{n}(x)\rvert^{p}=\mu_{\infty},\label{eq:c-c4}
\end{equation}
\begin{equation}
\lim\limits _{R\to\infty}\lim\limits _{n\to\infty}\sum\limits _{d^{S}(x,e)>R}\lvert v_{n}(x)\rvert^{q}=\nu_{\infty}.\label{eq:c-c5}
\end{equation}
Combining the equations (\ref{eq: c-c2})-(\ref{eq:c-c5}), we get
\[
\nu_{\infty}^{p/q}\leq K^{-1}\mu_{\infty}.
\]

Since $u_{n}\to u$ pointwise on $G$, then for every $R\geq1$, 
\begin{align*}
\varlimsup\limits _{n\to\infty}\sum\lvert\Delta u_{n}\rvert^{p} & =\varlimsup\limits _{n\to\infty}\left(\sum\Psi_{R}\lvert\Delta u_{n}\rvert^{p}+\sum(1-\Psi_{R})\lvert\Delta u_{n}\rvert^{p}\right)\\
 & =\varlimsup\limits _{n\to\infty}\sum\Psi_{R}\lvert\Delta u_{n}\rvert^{p}+\sum(1-\Psi_{R})\lvert\Delta u\rvert^{p},
\end{align*}
and 
\begin{align*}
\varlimsup\limits _{n\to\infty}\sum\lvert u_{n}\rvert^{q} & =\varlimsup\limits _{n\to\infty}\left(\sum\Psi_{R}\lvert u_{n}\rvert^{q}+\sum(1-\Psi_{R})\lvert u_{n}\rvert^{q}\right)\\
 & =\varlimsup\limits _{n\to\infty}\sum\Psi_{R}\lvert u_{n}\rvert^{q}+\sum(1-\Psi_{R})\lvert u\rvert^{q}.
\end{align*}
Letting $R\to\infty$, we obtain 
\[
\lim\limits _{n\to\infty}\sum\lvert\Delta u_{n}\rvert^{p}=\mu_{\infty}+\sum\lvert\Delta u\rvert^{p}=\mu_{\infty}+\lVert u\rVert_{D^{2,p}}^{p},
\]
\[
\lim\limits _{n\to\infty}\sum\lvert u_{n}\rvert^{q}=\nu_{\infty}+\sum\lvert u\rvert^{q}=\nu_{\infty}+\lVert u\rVert_{q}^{q}.
\]
\end{proof}
\begin{rem*}
(1) In the continuous setting, P. L. Lions \cite{L3}, Bianchi et
al.\cite{BCS} and Ben-Naoum et al.\cite{BTW} proved that the limit
of the minimizing sequence norm can be divided into three parts, i.e.
the norm of the limit function, the norm of the limit of the difference
between the sequence and the limit function, and the norm of the sequence
at infinity. The corresponding parts still satisfy the Sobolev inequality,
see \cite[Lemma 1.40]{W2}.

(2) The difference between the sequence and the limit function $w^{\ast}$-converges
to $0$ in $\ell^{1}(G)$, i.e. (\ref{eq:nu=00003D00003D00003D00003D00003D0})
and (\ref{eq:mu=00003D00003D00003D00003D00003D0}), which is not true
in continuous setting. For example, consider the sequence of probability
measures $\left\{ \delta_{n}\right\} $ in $[0,1]$, where $\delta_{n}(x)\coloneqq n\chi_{[0,\frac{1}{n}]}\textrm{d}x$,
then $\delta_{n}\to0$ almost everywhere in $[0,1]$. However, $\delta_{n}\xrightharpoonup{w^{\ast}}\delta_{0}$
in $\left(C[0,1]\right)^{\ast}$ and the Dirac measure $\delta_{0}$
is non-zero. This is the advantage of the discrete setting. 
\end{rem*}

\section{Proof of Theorem \ref{thm:main1}}

In this section, we will prove the existence of the extremal function
for the discrete second-order Sobolev inequality (\ref{eq:second-order Sobo ineq}).
Firstly, we prove that the minimizing sequence after translation has
a uniform positive lower bound at the unit element $e\in G$. This
is crucial to rule out the vanishing case of the limit function. 
\begin{lem}
\label{lem:lower bound}For $N\geq3,1<p<\frac{N}{2},q>p^{\ast\ast}$,
let $\left\{ u_{n}\right\} \subset D^{2,p}(G)$ be a minimizing sequence
satisfying (\ref{eq:second-order Sobo ineq}). Then $\varliminf\limits _{n\to\infty}\lVert u_{n}\rVert_{\ell^{\infty}}>0$. 
\end{lem}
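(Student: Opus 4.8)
The plan is to exploit the strict supercriticality $q>p^{\ast\ast}$ via an elementary $\ell^{p^{\ast\ast}}$--$\ell^{\infty}$ interpolation, and to convert the uniform bound on $\lVert u_{n}\rVert_{D^{2,p}}$ coming from the minimizing property into a uniform positive lower bound on $\lVert u_{n}\rVert_{\ell^{\infty}}$. The whole argument hinges on the exponent $q-p^{\ast\ast}$ being strictly positive.

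First I would split the $\ell^{q}$ norm using $q=p^{\ast\ast}+(q-p^{\ast\ast})$: for any $u\in\ell^{p^{\ast\ast}}(G)$,
\[
\lVert u\rVert_{q}^{q}=\sum_{x\in G}\lvert u(x)\rvert^{q-p^{\ast\ast}}\lvert u(x)\rvert^{p^{\ast\ast}}\leq\lVert u\rVert_{\ell^{\infty}}^{q-p^{\ast\ast}}\lVert u\rVert_{\ell^{p^{\ast\ast}}}^{p^{\ast\ast}}.
\]
This estimate is valid precisely because $q-p^{\ast\ast}>0$, so that $\lvert u(x)\rvert^{q-p^{\ast\ast}}\leq\lVert u\rVert_{\ell^{\infty}}^{q-p^{\ast\ast}}$ pointwise.

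Next I would invoke the critical second-order Sobolev inequality (\ref{eq: second-order sobo2}), namely $\lVert u_{n}\rVert_{\ell^{p^{\ast\ast}}}\leq C_{p}\lVert u_{n}\rVert_{D^{2,p}}$. Since $\{u_{n}\}$ is a minimizing sequence satisfying (\ref{eq:min seq}), we have $\lVert u_{n}\rVert_{D^{2,p}}^{p}\to K$, so there is a constant $M>0$ with $\lVert u_{n}\rVert_{\ell^{p^{\ast\ast}}}^{p^{\ast\ast}}\leq M$ for all large $n$. Feeding this together with the normalization $\lVert u_{n}\rVert_{q}=1$ into the interpolation inequality yields
\[
1=\lVert u_{n}\rVert_{q}^{q}\leq\lVert u_{n}\rVert_{\ell^{\infty}}^{q-p^{\ast\ast}}\lVert u_{n}\rVert_{\ell^{p^{\ast\ast}}}^{p^{\ast\ast}}\leq M\,\lVert u_{n}\rVert_{\ell^{\infty}}^{q-p^{\ast\ast}},
\]
whence $\lVert u_{n}\rVert_{\ell^{\infty}}\geq M^{-1/(q-p^{\ast\ast})}>0$ for all large $n$. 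Taking $\varliminf$ over $n$ then gives the claim.

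There is essentially no technical obstacle in this argument; the only genuinely essential feature is the strict inequality $q-p^{\ast\ast}>0$. In the critical case $q=p^{\ast\ast}$ the exponent collapses and the interpolation conveys no information, so this route fails there. This matches the discussion in the introduction: supercriticality is exactly what lets one exclude the vanishing scenario (if $\lVert u_{n}\rVert_{\ell^{\infty}}\to0$, the pointwise limit of the translated sequence could be trivial), compensating for the absence of a workable rescaling or rearrangement trick on $G$.
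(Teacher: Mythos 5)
Your argument is correct and is essentially the same as the paper's: both bound $1=\lVert u_{n}\rVert_{q}^{q}\leq\lVert u_{n}\rVert_{\ell^{\infty}}^{q-q'}\lVert u_{n}\rVert_{q'}^{q'}$ and then control $\lVert u_{n}\rVert_{q'}$ by the Sobolev inequality and the boundedness of $\lVert u_{n}\rVert_{D^{2,p}}$, using only that $q-q'>0$. The sole cosmetic difference is that the paper chooses an intermediate exponent $p^{\ast\ast}<q'<q$ while you take the endpoint $q'=p^{\ast\ast}$, which works equally well since the critical Sobolev inequality (\ref{eq: second-order sobo2}) is available.
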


\begin{proof}
Choosing $q'$ such that $p^{\ast\ast}<q'<q<\infty$, by interpolation
inequality we have 
\[
1=\lVert u_{n}\rVert_{q}^{q}\leq\lVert u_{n}\rVert_{q'}^{q'}\lVert u_{n}\rVert_{\infty}^{q-q'}\leq C_{q',p}^{q'}\lVert u_{n}\rVert_{D^{2,p}}^{q'}\lVert u_{n}\rVert_{\infty}^{q-q'},
\]
where $C_{q',p}$ is the constant in the Sobolev inequality (\ref{eq:second-order Sobo ineq}).

By taking the limit, we obtain 
\[
1\leq C_{q',p}^{q'}K^{\frac{q'}{p}}\varliminf\limits _{n\to\infty}\lVert u_{n}\rVert_{\infty}^{q-q'}.
\]
This proves the lemma. 
\end{proof}
\begin{rem*}
The maximum of $\lvert u_{n}\rvert$ is attainable since $\lVert u_{n}\rVert_{q}=1$.
Define $v_{n}\left(x\right):=u_{n}(x_{n}x)$, where $\lvert u_{n}(x_{n})\rvert=\max\limits _{x}\lvert u_{n}(x)\rvert$.
Then the translation sequence $\left\{ v_{n}\right\} $ is uniformly
bounded in $D^{2,p}(G)$, $\lVert v_{n}\rVert_{q}=1$ and $\lvert v_{n}(e)\rvert=\lVert u_{n}\rVert_{\infty}$,
where $e$ is the unit element of $G$. By Lemma \ref{lem:lower bound},
passing to a subsequence if necessary, we have 
\[
v_{n}\longrightarrow v\quad\;\textrm{pointwise\;on}\;G,
\]
\begin{equation}
\lvert v(e)\rvert=\varliminf\limits _{n\to\infty}\lVert u_{n}\rVert_{\ell^{\infty}}>0.\label{eq:v(0)>0}
\end{equation}
\end{rem*}
Next, we give the proof %
\mbox{%
I%
} of Theorem \ref{thm:main1}. 
\begin{proof}[Proof %
\mbox{%
I%
} of Theorem \ref{thm:main1}]
Let $\left\{ u_{n}\right\} \subset D^{2,p}(G)$ be a minimizing sequence
satisfying (\ref{eq:min seq}). And the translation sequence $\left\{ v_{n}\right\} $
is defined in the Remark after Lemma \ref{lem:lower bound}.

By equalities (\ref{eq:composition2}) and (\ref{eq:composition1})
in Lemma \ref{lem:Concentration-Compact}, passing to a subsequence,
we get 
\[
K=\lim\limits _{n\to\infty}\lVert v_{n}\rVert_{D^{2,p}}^{p}=\lVert v\rVert_{D^{2,p}}^{p}+\mu_{\infty},
\]
\[
1=\lim\limits _{n\to\infty}\lVert v_{n}\rVert_{q}^{q}=\lVert v\rVert_{q}^{q}+\nu_{\infty}.
\]
From the Sobolev inequality (\ref{eq:second-order Sobo ineq}), (\ref{eq:infinite})
and the inequality 
\begin{equation}
\left(a^{q}+b^{q}\right)^{p/q}\leq a^{p}+b^{p}\text{,}\;\forall a,b\geq0,\label{eq:trivial inequality}
\end{equation}
we get 
\begin{align*}
K & =\lVert v\rVert_{D^{2,p}}^{p}+\mu_{\infty}\\
 & \geq K((\lVert v\rVert_{q}^{q})^{p/q}+\nu_{\infty}^{p/q})\\
 & \geq K(\lVert v\rVert_{q}^{q}+\nu_{\infty})^{p/q}=K.
\end{align*}
Since $\left(a^{q}+b^{q}\right)^{p/q}<a^{p}+b^{p}$ unless $a=0$
or $b=0$, we deduce from (\ref{eq:v(0)>0}) that $\lVert v\rVert_{q}^{q}=1$.
By 
\[
\lVert v\rVert_{D^{2,p}}^{p}\geq K\lVert v\rVert_{q}^{p},
\]
we get 
\[
\lVert v\rVert_{D^{2,p}}^{p}=K=\lim\limits _{n\to\infty}\lVert v_{n}\rVert_{D^{2,p}}^{p}.
\]
That is, $v$ is a minimizer. 
\end{proof}
Then we give another proof for Theorem \ref{thm:main1} using the
discrete Brézis-Lieb lemma. 
\begin{proof}[Proof %
\mbox{%
II%
} of Theorem \ref{thm:main1}]
Using Lemma \ref{lem:lower bound}, by the translation and taking
a subsequence if necessary as before, we can get a minimizing sequence
$\left\{ u_{n}\right\} $ satisfying (\ref{eq:min seq}), $u_{n}\to u$
pointwise on $G$, and $\lvert u(e)\rvert>0$.

By Lemma \ref{lem:(Discrete-Brezis-Lieb-lemma)}, the inequality (\ref{eq:trivial inequality})
and the Sobolev inequality, passing to a subsequence, we have 
\begin{align}
K=\underset{n\rightarrow\infty}{\lim}\lVert u_{n}\rVert_{D^{2,p}}^{p} & =\underset{n\rightarrow\infty}{\lim}\frac{\lVert u_{n}\rVert_{D^{2,p}}^{p}}{\lVert u_{n}\rVert_{q}^{p}}=\underset{n\rightarrow\infty}{\overline{\lim}}\frac{\lVert u_{n}-u\rVert_{D^{2,p}}^{p}+\lVert u\rVert_{D^{2,p}}^{p}}{\left(\lVert u_{n}-u\rVert_{q}^{q}+\lVert u\rVert_{q}^{q}\right)^{p/q}}\label{eq:p/q-1}\\
 & \geq\underset{n\rightarrow\infty}{\overline{\lim}}\frac{\lVert u_{n}-u\rVert_{D^{2,p}}^{p}+\lVert u\rVert_{D^{2,p}}^{p}}{\lVert u_{n}-u\rVert_{q}^{p}+\lVert u\rVert_{q}^{p}}\nonumber \\
 & \geq\underset{n\rightarrow\infty}{\overline{\lim}}\frac{K\lVert u_{n}-u\rVert_{q}^{p}+\lVert u\rVert_{D^{2,p}}^{p}}{\lVert u_{n}-u\rVert_{q}^{p}+\lVert u\rVert_{q}^{p}}.\nonumber 
\end{align}
Since $u\text{\ensuremath{\not\equiv}}0$, by the Sobolev inequality
we have that 
\[
\lVert u\rVert_{D^{2,p}}^{p}\leq K\lVert u\rVert_{q}^{p},
\]
which implies 
\[
\lVert u\rVert_{D^{2,p}}^{p}=K\lVert u\rVert_{q}^{p}.
\]
By (\ref{eq:p/q-1}), passing to a subsequence, we get 
\[
\underset{n\rightarrow\infty}{\lim}\lVert u_{n}-u\rVert_{D^{2,p}}^{p}=K\underset{n\rightarrow\infty}{\lim}\lVert u_{n}-u\rVert_{q}^{p}.
\]
Since $0<\lVert u\rVert_{q}\leq\underset{n\rightarrow\infty}{\lim}\lVert u_{n}\rVert_{q}=1$,
it suffices to show that $\lVert u\rVert_{q}=1$. Suppose that it
is not true, i.e. $0<\lVert u\rVert_{q}=D<1$, then by Lemma \ref{lem:(Discrete-Brezis-Lieb-lemma)},
\[
\underset{n\rightarrow\infty}{\lim}\lVert u_{n}-u\rVert_{q}^{q}=\underset{n\rightarrow\infty}{\lim}\lVert u_{n}\rVert_{q}^{q}-\lVert u\rVert_{q}^{q}=1-D^{q}>0.
\]
However, $\left(a^{q}+b^{q}\right)^{p/q}<a^{p}+b^{p}$ if $a,$ $b>0$.
This yields a contradiction by (\ref{eq:p/q-1}).

Thus, $\lVert u\rVert_{q}=1$ and $u$ is a minimizer. 
\end{proof}

\section{Proofs for corollaries }

As applications, we prove corollaries in the introduction. The following
lemma establishes the equivalence between solutions to the higher-order
quasilinear problem (\ref{eq:p-bihar}) and to the system (\ref{eq:LE system}). 
\begin{lem}
\label{lem: the equivalence}For $N\geq3$, $p,q>1$, $\frac{1}{p^{\prime}}+\frac{1}{q}<\frac{N-2}{N}$,
$p^{\prime}=\frac{p}{p-1}$, $q^{\prime}=\frac{q}{q-1}$, $(u,v)$
is a solution of (\ref{eq:p-bihar}) if and only if $u$ is a solution
of (\ref{eq:LE system}) with $v\coloneqq-\mid\Delta u\mid^{p-2}\Delta u$. 
\end{lem}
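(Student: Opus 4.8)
The plan is to reduce the whole equivalence to one pointwise algebraic fact, the duality between the nonlinear maps $b\mapsto|b|^{p-2}b$ and $a\mapsto|a|^{p'-2}a$, and then to check the function-space memberships. First I would record the elementary identity that, for conjugate exponents with $\frac{1}{p}+\frac{1}{p'}=1$, the relation $a=|b|^{p-2}b$ is equivalent to $b=|a|^{p'-2}a$. This follows from $|a|=|b|^{p-1}$ together with the exponent arithmetic $(p-1)(p'-2)=2-p$, whence $|a|^{p'-2}a=|b|^{2-p}|b|^{p-2}b=b$. Applying this with $a=-\Delta u$ shows that the first line $-\Delta u=|v|^{p'-2}v$ of the system (\ref{eq:LE system}) is \emph{equivalent} to the substitution $v=-|\Delta u|^{p-2}\Delta u$ named in the statement. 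Since the graph Laplacian acts pointwise, every identity below is purely algebraic and needs no test functions.

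With this in hand the two implications are short. For the forward direction, given a solution $u\in D^{2,p}(G)$ of the $p$-biharmonic equation (\ref{eq:p-bihar}) I set $v:=-|\Delta u|^{p-2}\Delta u$; the duality yields the first equation of (\ref{eq:LE system}), while applying $-\Delta$ to the definition of $v$ and using (\ref{eq:p-bihar}) gives $-\Delta v=\Delta(|\Delta u|^{p-2}\Delta u)=|u|^{q-2}u$, the second equation. Conversely, given a solution $(u,v)$ of (\ref{eq:LE system}), the duality applied to its first equation forces $v=-|\Delta u|^{p-2}\Delta u$, and substituting into the second equation gives $\Delta(|\Delta u|^{p-2}\Delta u)=-\Delta v=|u|^{q-2}u$, so $u$ solves (\ref{eq:p-bihar}).

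The technical heart is the regularity matching, namely that the $v$ produced in the forward direction really lies in $D^{2,q'}(G)$. Using $(p-1)p'=p$ one gets $|v|^{p'}=|\Delta u|^{p}$, so $v\in\ell^{p'}(G)$; using $(q-1)q'=q$ together with $\Delta v=-|u|^{q-2}u$ one gets $\lVert\Delta v\rVert_{q'}^{q'}=\lVert u\rVert_q^{q}$. Here the hypothesis $\frac{1}{p'}+\frac{1}{q}<\frac{N-2}{N}$ is decisive: I would first rewrite it as the supercritical condition $q>p^{\ast\ast}$, so that $u\in\ell^{p^{\ast\ast}}(G)\hookrightarrow\ell^{q}(G)$ and hence $\lVert\Delta v\rVert_{q'}<\infty$; the same inequality also yields $q'<N/2$, so the target exponent $(q')^{\ast\ast}=\frac{Nq'}{N-2q'}$ is well-defined.

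The main obstacle I anticipate is that one cannot deduce $v\in\ell^{(q')^{\ast\ast}}(G)$ by a naive embedding, since $p'$ and $(q')^{\ast\ast}$ need not be comparable in the helpful direction (indeed $p'>(q')^{\ast\ast}$ can occur). To close this gap I would invoke Theorem \ref{thm: the equivalence for G_k=00003D00003D00003D2} with exponent $q'$ (legitimate since $1<q'<N/2$): the Laplacian is a bijective isometry from $D^{2,q'}(G)$ onto $\ell^{q'}(G)$, so from $\Delta v\in\ell^{q'}(G)$ we obtain a unique $w\in D^{2,q'}(G)$ with $\Delta w=\Delta v$. Then $v-w$ is harmonic and lies in $\ell^{s}(G)$ with $s=\max\{p',(q')^{\ast\ast}\}<\infty$, so the $\ell^{s}$-Liouville property of $G$ forces $v=w\in D^{2,q'}(G)$. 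This completes the verification and hence the equivalence.
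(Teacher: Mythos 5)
Your proposal is correct and follows essentially the same route as the paper: both reduce the hypothesis to the supercritical condition $q>p^{\ast\ast}$, use the pointwise duality between $b\mapsto|b|^{p-2}b$ and $a\mapsto|a|^{p'-2}a$ to pass between the two problems, and place $v$ in $D^{2,q'}(G)$ by solving $-\Delta w=|u|^{q-2}u$ via the isometry of Theorem \ref{thm: the equivalence for G_k=00003D00003D00003D2} and then identifying $v=w$ through the $\ell^{s}$-Liouville property. Your write-up is in fact somewhat more explicit than the paper's about the exponent arithmetic and about why the harmonic difference $v-w$ lies in a common $\ell^{s}$ space, which is a welcome clarification rather than a deviation.
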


\begin{proof}
Since $\frac{1}{p^{\prime}}+\frac{1}{p^{\ast\ast}}=\frac{N-2}{N},$
$\frac{1}{p^{\prime}}+\frac{1}{q}<\frac{N-2}{N}$ if and only if $q>p^{\ast\ast}$.
Suppose that $u\in D^{2,p}(G)$ is a solution of (\ref{eq:p-bihar}).
Set $v\coloneqq-\mid\Delta u\mid^{p-2}\Delta u$. Then $v\in\ell^{p^{\prime}}(G)$
and $u\in\ell^{p^{\ast\ast}}(G)$ by the Sobolev inequality. And $u\in\ell^{q}(G)$,
since $\ell^{p^{\ast\ast}}(G)$ embeds into $\ell^{q}(G)$. Let $w$
be a solution of 
\[
-\Delta w=\lvert u\rvert^{q-2}u.
\]
Hence, $\Delta w\in\ell^{q^{\prime}}$ and $w\in D^{2,q^{\prime}}(G)$
by the fact that Laplace operator $\Delta$ is an isometry from $D^{2,q^{\prime}}(G)$
to $\ell^{q^{\prime}}(G)$, see Theorem \ref{thm: the equivalence for G_k=00003D00003D00003D2}.
By the uniqueness theorem for harmonic functions we know that 
\[
v=w\in D^{2,q^{\prime}}(G).
\]
Note that $\lvert v\rvert^{p^{\prime}-2}v=-\Delta u$. That is, $(u,v)$
is a solution of (\ref{eq:LE system}). On the other hand, if $(u,v)$
is a pair of solution for (\ref{eq:LE system}), we have $-\mid\Delta u\mid^{p-2}\Delta u=v$.
This proves the lemma. 
\end{proof}
By Theorem \ref{thm:main1}, we can prove Corollary \ref{cor:p-bihar}. 
\begin{proof}[Proof of Corollary \ref{cor:p-bihar}]
By Theorem \ref{thm:main1} there exists a minimizer $u$ for the
problem (\ref{eq:inf}). Let $v$ be a solution of 
\[
-\Delta v=\mid-\Delta u\mid.
\]
Then $\lVert v\rVert_{q}\lesssim\lVert\Delta u\rVert_{p}$ since Laplace
operator $\Delta$ is an isometry from $D_{0}^{2,p}(G)$ to $\ell^{p}(G)$,
and $D_{0}^{2,p}(G)$ embeds into $\ell^{q}(G)$ for $q\geq p^{\ast\ast}$.
Note that $u\leq v$ by the maximum principle. Replacing $u$ by $-u$,
we get $-u\leq v$ similarly. Hence, 
\[
0\leq\mid u\mid\leq v.
\]
In particular we have $1=\lVert u\rVert_{q}\leq\lVert v\rVert_{q}$
and $\lVert\Delta u\rVert_{p}=\lVert\Delta v\rVert_{p}$. Therefore,
we know that $\frac{v}{\lVert v\rVert_{q}}$ is a non-negative minimizer.
It follows from the Lagrange multiplier that $\frac{v}{\lVert v\rVert_{q}}$
is a non-negative solution of (\ref{eq:p-bihar}). The maximum principle
yields that it is positive. 
\end{proof}
Finally, we can prove Corollary \ref{cor:L-E system} by Corollary
\ref{cor:p-bihar} and Lemma \ref{lem: the equivalence}. 
\begin{proof}[Proof of Corollary \ref{cor:L-E system}]
By Corollary \ref{cor:p-bihar} there exists a positive solution
$u$ of equation (\ref{eq:p-bihar}). Set $v\coloneqq-\mid\Delta u\mid^{p-2}\Delta u$
as Lemma \ref{lem: the equivalence} and we know 
\[
-\Delta v=\lvert u\rvert^{q-2}u>0.
\]
Hence $v$ is positive by the maximum principle. 
\end{proof}

\section{The cases of $p=2$ and $p=1$}

In this section, we study the special cases of $p=2$ and $p=1$.
First, for $p=2$, consider a Cayley graph $(G,S)$ satisfying $\beta_{S}(n)\geq C(S)n^{N},\ \forall n\geq1$.
By the first-order Sobolev inequality (\ref{eq:first-order sobo ineq-1})
we know $D_{0}^{1,2}(G)\subseteq D^{1,2}(G)$. We fix an orientation
for edge set $E$. Then $D_{0}^{1,2}(G)$ and $D^{1,2}(G)$ are Hilbert
spaces equipped with the inner product 
\[
\langle\nabla_{e}u,\nabla_{e}v\rangle_{E}\coloneqq\underset{e\in E}{\sum}\left(u(e_{+})-u(e_{-})\right)\left(v(e_{+})-v(e_{-})\right),
\]
where $e_{-}$ and $e_{+}$ are the initial and terminal endpoints
of $e$ respectively. For any $\alpha\in C(E)$, 
\[
\text{div}\,\alpha(x)\coloneqq\underset{e=(y,x)\in E}{\sum}\alpha(e)-\underset{\tilde{e}=(x,y)\in E}{\sum}\alpha(\tilde{e}),\;\;x\in V.
\]
Then we can prove the Hodge decomposition theorem on $1$-forms on
edges on the Cayley graph. 
\begin{thm}
\label{thm:Hodge decomposition}If the Cayley graph $(G,S)$ satisfies
$\beta_{S}(n)\geq C(S)n^{N},\ \forall n\geq1$, then we have decompositions
\[
\ell^{2}(E)=\nabla D_{0}^{1,2}(G)\oplus H
\]
and 
\[
\ell^{2}(E)=\nabla D^{1,2}(G)\oplus H,
\]
where $H\coloneqq\left\{ u\in\ell^{2}(E):\text{div}\,u=0\right\} $. 
\end{thm}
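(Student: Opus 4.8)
The plan is to realize both decompositions as orthogonal splittings of the Hilbert space $\ell^2(E)$, showing that $H=(\nabla D_0^{1,2}(G))^\perp=(\nabla D^{1,2}(G))^\perp$. First I would record the discrete integration-by-parts identity: for $\alpha\in\ell^2(E)$ and $f\in C_0(G)$, reindexing the sum over oriented edges by their endpoints gives
\[
\langle\alpha,\nabla f\rangle_E=\sum_{e\in E}\alpha(e)\bigl(f(e_+)-f(e_-)\bigr)=\sum_{x\in V}f(x)\,\mathrm{div}\,\alpha(x)=\langle\mathrm{div}\,\alpha,f\rangle_V,
\]
so $\mathrm{div}$ is the (signed) adjoint of $\nabla$ and $\mathrm{div}\,\nabla=-\Delta$. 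Since $\|\nabla u\|_{\ell^2(E)}=2^{-1/2}\|u\|_{D^{1,2}(G)}$, the gradient $\nabla$ is a scalar multiple of an isometry; it is injective on $D_0^{1,2}(G)$ because the first-order Sobolev inequality (\ref{eq:first-order sobo ineq-1}) forces $\|u\|_{\ell^{2^\ast}}\le C\|u\|_{D^{1,2}}$, so $\nabla u=0$ implies $u=0$. As $D_0^{1,2}(G)$ is complete, its image $\nabla D_0^{1,2}(G)$ is complete and hence a closed subspace of $\ell^2(E)$.

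For the first decomposition I would compute the orthogonal complement. The bound $|\langle\alpha,\nabla f\rangle_E|\le 2^{-1/2}\|\alpha\|_{\ell^2(E)}\|f\|_{D^{1,2}}$ shows that $f\mapsto\langle\alpha,\nabla f\rangle_E$ is $D^{1,2}$-continuous, so it vanishes on $D_0^{1,2}(G)$ if and only if it vanishes on the dense subspace $C_0(G)$; by the integration-by-parts identity this holds exactly when $\mathrm{div}\,\alpha\equiv0$, i.e. when $\alpha\in H$. Thus $(\nabla D_0^{1,2}(G))^\perp=H$, and the orthogonal projection theorem for closed subspaces yields $\ell^2(E)=\nabla D_0^{1,2}(G)\oplus H$.

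For the second decomposition I would first observe that $D^{1,2}(G)$ is itself complete: a $D^{1,2}$-Cauchy sequence has $\ell^2(E)$-Cauchy gradients and, by Sobolev, is $\ell^{2^\ast}$-Cauchy, so it converges to some $u\in D^{1,2}(G)$; hence $\nabla D^{1,2}(G)$ is again closed. It then remains to identify $\nabla D^{1,2}(G)$ with $\nabla D_0^{1,2}(G)$. Given $f\in D^{1,2}(G)$, I would decompose $\nabla f=\nabla g+\gamma$ with $g\in D_0^{1,2}(G)$ and $\gamma\in H$ via the first decomposition; then $h:=f-g\in\ell^{2^\ast}(G)$ satisfies $\Delta h=-\mathrm{div}\,\nabla h=-\mathrm{div}\,\gamma=0$, so $h$ is harmonic.

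I expect this harmonic reduction to be the main obstacle, and I would dispatch it by a Liouville argument. Since $h\in\ell^{2^\ast}(G)$, it vanishes at infinity, so if $\sup_G h>0$ it is attained at some vertex; harmonicity together with connectedness of the Cayley graph then forces $h$ to be constant on all of $G$, contradicting vanishing at infinity, and the same applied to $-h$ gives $h\equiv0$ (this is the $\ell^p$-Liouville property, compare the use of \cite[Section 1.7]{B} in the proof of Theorem \ref{thm: the equivalence for G_k=00003D00003D00003D2}). Therefore $\gamma=\nabla h=0$ and $\nabla f=\nabla g\in\nabla D_0^{1,2}(G)$, whence $\nabla D^{1,2}(G)=\nabla D_0^{1,2}(G)$ and $\ell^2(E)=\nabla D^{1,2}(G)\oplus H$. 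By injectivity of $\nabla$ this equality of images also gives $D_0^{1,2}(G)=D^{1,2}(G)$, which is precisely Corollary \ref{cor:D012=00003D00003D00003D00003DD12}.
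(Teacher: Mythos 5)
Your argument is correct, and its first half is the paper's argument in different clothing: the paper produces the decomposition by applying Lax--Milgram to $B(v,w)=\langle\nabla_e v,\nabla_e w\rangle_E$ and $L(v)=-\langle\operatorname{div}u,v\rangle$ to solve $\Delta f=\operatorname{div}u$ with $f\in D_0^{1,2}(G)$, which for this symmetric coercive form is exactly your orthogonal projection onto the closed subspace $\nabla D_0^{1,2}(G)$ together with the identification $(\nabla D_0^{1,2}(G))^{\perp}=H$ via the adjoint relation. Where you genuinely diverge is the second splitting: the paper simply asserts it follows ``by the same argument'' run on $D^{1,2}(G)$ (and proves directness there by the same harmonicity observation you use), whereas you deduce it from the first splitting by showing $\nabla D^{1,2}(G)=\nabla D_0^{1,2}(G)$ through the maximum-principle/$\ell^{2^\ast}$-Liouville argument. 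Your route is arguably tidier, since it avoids re-checking the Hilbert-space hypotheses on $D^{1,2}(G)$ and delivers Corollary \ref{cor:D012=00003D00003D00003D00003DD12} as an immediate byproduct rather than as a separate consequence. One small caveat: your preliminary claim that $D^{1,2}(G)$ is complete leans on applying the Sobolev inequality (\ref{eq:first-order sobo ineq-1}) to differences $u_n-u_m$ of general $D^{1,2}$ functions, while the paper only states that inequality on $D_0^{1,2}(G)$; this step is not actually needed, because closedness of $\nabla D^{1,2}(G)$ follows once you have shown it coincides with $\nabla D_0^{1,2}(G)$, so you should either drop the completeness aside or defer it until after that identification.
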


\begin{proof}
For any $u\in\ell^{2}(E)$, we define a bounded linear operator on
$D_{0}^{1,2}(G)$ as 
\[
L(v)\coloneqq-\langle\text{div}\,u,v\rangle.
\]
A nondegerate bilinear functional $B:D_{0}^{1,2}(G)\times D_{0}^{1,2}(G)\to\mathbb{R}$
is defined as 
\[
B(v,w)\coloneqq\langle\nabla_{e}v,\nabla_{e}w\rangle_{E}.
\]
By the Lax--Milgram theorem, there exists $f\in D_{0}^{1,2}(G)$
such that 
\[
B(f,v)=L(v),\:\forall v\in D_{0}^{1,2}(G).
\]
Hence, for any $v\in C_{0}(G)$, 
\[
\langle\Delta f,v\rangle=-\langle\nabla_{e}f,\nabla_{e}v\rangle_{E}=\langle\text{div}\,u,v\rangle,
\]
which implies that $\Delta f=\text{div}\,u$.

Since $\lVert\nabla f\rVert_{\ell^{2}(E)}=\frac{1}{2}\lVert f\rVert_{D^{1,2}(G)}$,
$\nabla f\in\ell^{2}(E)$. Let $h=u-\nabla f\in\ell^{2}(E)$. Then
\[
\text{div}\,h=\text{div}\,u-\text{div}\,\nabla f=\text{div}\,u-\Delta f=0.
\]
That is $h\in H$. The decomposition is proved. If $u\in\nabla D_{0}^{1,2}(G)\cap H$,
then there exists $v\in D_{0}^{1,2}(G)$ such that $u=\nabla v$ and
$\text{div}\,u=\Delta v=0$. Hence $v=0$ and $u=0$. The property
of the direct sum is proved. And we can get the decomposition $\ell^{2}(E)=\nabla D^{1,2}(G)\oplus H$
by the same argument. 
\end{proof}
\begin{cor}
\label{cor:D012=00003D00003D00003D00003DD12}If the Cayley graph $(G,S)$
satisfies $\beta_{S}(n)\geq C(S)n^{N},\ \forall n\geq1$, then $D_{0}^{1,2}(G)=D^{1,2}(G)$. 
\end{cor}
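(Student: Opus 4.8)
The plan is to deduce the equality purely from the two Hodge decompositions in Theorem~\ref{thm:Hodge decomposition}, which share the \emph{same} summand $H=\{u\in\ell^2(E):\text{div}\,u=0\}$. First I would record the inclusion $D_0^{1,2}(G)\subseteq D^{1,2}(G)$ coming from the first-order Sobolev inequality~(\ref{eq:first-order sobo ineq-1}), so that, writing $A:=\nabla D_0^{1,2}(G)$ and $B:=\nabla D^{1,2}(G)$ inside $\ell^2(E)$, we have $A\subseteq B$.

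Then I would run a short algebraic argument that needs nothing beyond directness of the two sums. Both $\ell^2(E)=A\oplus H$ and $\ell^2(E)=B\oplus H$ are direct sum decompositions with the same complement $H$. Given any $b\in B$, I use the first decomposition to write $b=a+h$ with $a\in A\subseteq B$ and $h\in H$; then $h=b-a\in B$, so $h\in B\cap H=\{0\}$ by the directness of the second decomposition, forcing $b=a\in A$. Hence $B\subseteq A$, and combined with $A\subseteq B$ this yields $\nabla D_0^{1,2}(G)=\nabla D^{1,2}(G)$.

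Finally I would upgrade this to an equality of the function spaces themselves via injectivity of the gradient. For $u\in D^{1,2}(G)$ the previous step produces $w\in D_0^{1,2}(G)$ with $\nabla w=\nabla u$, whence $\nabla(u-w)\equiv 0$ and $u-w$ is constant on the connected Cayley graph $G$; since $u-w\in\ell^{2N/(N-2)}(G)$ and a nonzero constant is not summable on the infinite group $G$, we conclude $u=w\in D_0^{1,2}(G)$. This gives $D^{1,2}(G)\subseteq D_0^{1,2}(G)$ and hence $D_0^{1,2}(G)=D^{1,2}(G)$. I expect no serious obstacle: essentially all the content is already packaged in Theorem~\ref{thm:Hodge decomposition}, and the only point requiring a word of care is that $\nabla$ is injective on $D^{1,2}(G)$, which the $\ell^p$-integrability of its elements guarantees.
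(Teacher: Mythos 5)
Your proposal is correct and follows essentially the same route as the paper: both use the two Hodge decompositions of Theorem~\ref{thm:Hodge decomposition} with the common complement $H$, decompose $\nabla u$ for $u\in D^{1,2}(G)$ as $\nabla\tilde u+h$ with $\tilde u\in D_0^{1,2}(G)$, and observe that $h\in\nabla D^{1,2}(G)\cap H=\{0\}$ to conclude $\nabla(u-\tilde u)=0$ and hence $u=\tilde u$. Your explicit justification of the last step (a function with vanishing gradient is constant, and a nonzero constant is not in $\ell^{2N/(N-2)}$ on the infinite group) fills in a detail the paper leaves implicit.
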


\begin{proof}
By the Sobolev inequality we know $D_{0}^{1,2}(G)\subseteq D^{1,2}(G)$.
Then for any $u\in D^{1,2}(G)$, by the Hodge decomposition theorem
we get 
\[
\nabla u=\nabla\tilde{u}+h,\,\tilde{u}\in D_{0}^{1,2}(G),h\in H.
\]
And $h=\nabla u-\nabla\tilde{u}\in\nabla D^{1,2}(G)$, hence $h=0$,
that is $\nabla\left(u-\tilde{u}\right)=0$, which implies that $u=\tilde{u}\in D_{0}^{1,2}(G)$.
Hence $D^{1,2}(G)\subseteq D_{0}^{1,2}(G)$. 
\end{proof}
\begin{rem*}
For the groups satisfying the second-order Sobolev inequality, for
example, polynomial growth groups and non-amenable groups, we can
prove $D_{0}^{2,2}=D^{2,2}$ by checking the Laplace operator is an
isometry from $D_{0}^{2,2}(G)$ (also $D^{2,2}(G)$) to $\ell^{2}(G)$
as Theorem \ref{thm: the equivalence for G_k=00003D00003D00003D2}. 
\end{rem*}
For the special case $p=1$, since $\widetilde{D}_{0}^{2,1}(\mathbb{Z}^{N})=\widetilde{D}^{2,1}(\mathbb{Z}^{N})$
by Lemma \ref{thm: the equivalence for Z^N_k=00003D00003D00003D2},
we can consider the optimal constant $\widetilde{K}$ of Sobolev inequality
(\ref{eq: second-order Sobolev inequ_hessian}) in the supercritical
case $q>1^{\ast\ast}=\frac{N}{N-2}$ on $\mathbb{Z}^{N}$: 
\begin{equation}
\widetilde{K}:=\inf_{\substack{u\in\widetilde{D}^{2,1}(\mathbb{Z}^{N})\\
\lVert u\rVert_{q}=1
}
}\lVert u\rVert_{\widetilde{D}^{2,1}}.\label{eq:inf-1}
\end{equation}
In order to prove that the infimum is achieved, consider a minimizing
sequence $\{u_{n}\}\subset\widetilde{D}^{2,1}(\mathbb{Z}^{N})$ satisfying
\begin{equation}
\lVert u_{n}\rVert_{q}=1,\lVert u_{n}\rVert_{\widetilde{D}^{2,1}(\mathbb{Z}^{N})}\to\widetilde{K},n\to\infty.\label{eq:min seq-1}
\end{equation}
Then by the same argument as the proof of Theorem \ref{thm:main1}
we can get the following result for $\widetilde{D}^{2,1}(\mathbb{Z}^{N})$. 
\begin{thm}
\label{thm:main1-1}For $N\geq3,$ $q>1^{\ast\ast}=\frac{N}{N-2}$,
let $\left\{ u_{n}\right\} \subset\widetilde{D}^{2,1}(\mathbb{Z}^{N})$
be a minimizing sequence satisfying (\ref{eq:min seq-1}). Then there
exists a sequence $\{x_{n}\}\subset\mathbb{Z}^{N}$ and $v\in\widetilde{D}^{2,1}$
such that the sequence after translation $\left\{ v_{n}(x):=u_{n}(x_{n}+x)\right\} $
contains a convergent subsequence that converges to $v$ in $\widetilde{D}^{2,1}$.
And $v$ is a minimizer for $\widetilde{K}$. 
\end{thm}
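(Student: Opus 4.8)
The plan is to reproduce Proof II of Theorem \ref{thm:main1} (the Br\'ezis--Lieb plus nonvanishing argument) rather than Proof I. At the endpoint $p=1$ the space $\widetilde{D}^{2,1}(\mathbb{Z}^N)$ is non-reflexive, so the weak/$w^\ast$-compactness underlying the discrete Concentration--Compactness lemma (Lemma \ref{lem:Concentration-Compact}, stated only for $1<p<\frac{N}{2}$) is unavailable; the Br\'ezis--Lieb route, which needs only pointwise convergence together with a nonvanishing input, survives. Throughout I would work on the completion, using $\widetilde{D}_0^{2,1}(\mathbb{Z}^N)=\widetilde{D}^{2,1}(\mathbb{Z}^N)$ from Theorem \ref{thm: the equivalence for Z^N_k=00003D00003D00003D2}, so that the minimizing sequence and its limit lie in the same space and the Sobolev inequality (\ref{eq: second-order Sobolev inequ_hessian}), valid for $1\le p<\frac{N}{2}$, is at my disposal.

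First I would establish nonvanishing by adapting Lemma \ref{lem:lower bound} to $p=1$. Choosing $1^{\ast\ast}<q'<q$ and interpolating,
\[
1=\lVert u_n\rVert_q^q\le \lVert u_n\rVert_{q'}^{q'}\lVert u_n\rVert_\infty^{q-q'}\le C_{q'}^{q'}\lVert u_n\rVert_{\widetilde{D}^{2,1}}^{q'}\lVert u_n\rVert_\infty^{q-q'},
\]
where the last step is (\ref{eq: second-order Sobolev inequ_hessian}) combined with the embedding $\ell^{1^{\ast\ast}}\hookrightarrow\ell^{q'}$; letting $n\to\infty$ gives $\varliminf_n\lVert u_n\rVert_\infty>0$. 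Since $\mathbb{Z}^N$ is abelian, I translate by $x_n$ with $\lvert u_n(x_n)\rvert=\lVert u_n\rVert_\infty$, set $v_n(x):=u_n(x_n+x)$, and pass to a pointwise-convergent subsequence $v_n\to v$; then $\lvert v(e)\rvert=\varliminf_n\lVert u_n\rVert_\infty>0$, so $v\not\equiv0$, and $v\in\widetilde{D}^{2,1}(\mathbb{Z}^N)$ by Fatou applied to both $\nabla^2 v_n$ and $v_n$ together with the Sobolev bound on $\lVert v_n\rVert_{1^{\ast\ast}}$.

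Next I would run the Br\'ezis--Lieb splitting. Viewing $\nabla^2 v_n$ as a function on the counting-measure space $\{1,\dots,m\}^2\times\mathbb{Z}^N$, the array is uniformly bounded in $\ell^1$ and each second difference $\partial_j\partial_i v_n$ converges pointwise (being a finite combination of vertex values), so Lemma \ref{lem:(Discrete-Brezis-Lieb-lemma)} at exponent $1$ yields $\lim_n(\lVert v_n\rVert_{\widetilde{D}^{2,1}}-\lVert v_n-v\rVert_{\widetilde{D}^{2,1}})=\lVert v\rVert_{\widetilde{D}^{2,1}}$, while the same lemma at exponent $q$ gives $\lim_n\lVert v_n\rVert_q^q=\lim_n\lVert v_n-v\rVert_q^q+\lVert v\rVert_q^q$. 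With these I reproduce the chain of Proof II with $p=1$:
\[
\widetilde{K}=\lim_n\frac{\lVert v_n\rVert_{\widetilde{D}^{2,1}}}{\lVert v_n\rVert_q}=\varlimsup_n\frac{\lVert v_n-v\rVert_{\widetilde{D}^{2,1}}+\lVert v\rVert_{\widetilde{D}^{2,1}}}{\bigl(\lVert v_n-v\rVert_q^q+\lVert v\rVert_q^q\bigr)^{1/q}}\ge\varlimsup_n\frac{\widetilde{K}\lVert v_n-v\rVert_q+\lVert v\rVert_{\widetilde{D}^{2,1}}}{\lVert v_n-v\rVert_q+\lVert v\rVert_q},
\]
where the inequality uses both the specialization $(a^q+b^q)^{1/q}\le a+b$ of (\ref{eq:trivial inequality}) (valid since $q>1$) in the denominator and the Sobolev inequality (\ref{eq: second-order Sobolev inequ_hessian}) applied to $v_n-v$ in the numerator.

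Finally, since $v\not\equiv0$, the definition of $\widetilde{K}$ together with the chain (and boundedness of $\lVert v_n-v\rVert_q$) forces $\lVert v\rVert_{\widetilde{D}^{2,1}}=\widetilde{K}\lVert v\rVert_q$. Were $\lVert v\rVert_q=D<1$, Lemma \ref{lem:(Discrete-Brezis-Lieb-lemma)} would give $\lim_n\lVert v_n-v\rVert_q^q=1-D^q>0$, so the strict form $(a^q+b^q)^{1/q}<a+b$ for $a,b>0$ would render the first inequality of the chain strict, contradicting the equalities just deduced; hence $\lVert v\rVert_q=1$ and $v$ is a minimizer for $\widetilde{K}$. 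The Hessian Br\'ezis--Lieb identity then reads $\widetilde{K}=\lim_n\lVert v_n-v\rVert_{\widetilde{D}^{2,1}}+\widetilde{K}$, whence $\lVert v_n-v\rVert_{\widetilde{D}^{2,1}}\to0$, i.e. strong convergence in $\widetilde{D}^{2,1}(\mathbb{Z}^N)$. I expect the substantive obstacle to be precisely the endpoint nature of $p=1$: there is no analog of Proof I available, so one must verify that both the nonvanishing estimate and the Br\'ezis--Lieb splitting genuinely survive there, which is exactly why the Hessian norm $\widetilde{D}^{2,1}$ is used in place of the Laplacian norm---by the remark after Theorem \ref{thm: second order sobo ineq} the Laplacian-based second-order Sobolev inequality fails at $p=1$, the Riesz transform being only of weak type $(1,1)$.
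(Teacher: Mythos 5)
Your proof is correct and is essentially the paper's own argument: the paper disposes of Theorem \ref{thm:main1-1} with the single line ``by the same argument as the proof of Theorem \ref{thm:main1}'', and you have carried out Proof II of that theorem at $p=1$ with the Hessian norm --- the interpolation/nonvanishing step, the translation, the Br\'ezis--Lieb splitting of both $\lVert\nabla^{2}v_{n}\rVert_{1}$ and $\lVert v_{n}\rVert_{q}^{q}$, the strict-subadditivity contradiction, and the resulting strong convergence. Your motivating remark that Proof I is unavailable at $p=1$ is overstated (the discrete Concentration--Compactness lemma rests on $w^{\ast}$-compactness of bounded sets in $\ell^{1}=(c_{0})^{\ast}$ applied to $\lvert\nabla^{2}u_{n}\rvert^{p}$, not on reflexivity of the Sobolev space, so it too would adapt once the Hessian-norm Sobolev inequality for $p=1$ is in hand), but this does not affect the validity of the argument you actually give.
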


\begin{rem*}
This result implies that the best constant can be obtained in the
supercritical case. 
\end{rem*}
$\mathbf{\boldsymbol{\mathbf{Acknowledgements}\mathbf{}\text{:}}}$
The authors would like to thank Xueping Huang, Genggeng Huang and
Fengwen Han for helpful discussions and suggestions. B.H. is supported
by NSFC, no.11831004.

 \bibliographystyle{plain}
\bibliography{../../Documents/biharmonic_ref}

\end{document}